\pgfplotsset{width=7cm,
        compat=1.5.1,
        standard/.style={
        axis x line=middle,
        axis y line=middle,
        enlarge x limits=0.15,
        enlarge y limits=0.15,
        every axis x label/.style={at={(current axis.right of origin)},anchor=north west},
        every axis y label/.style={at={(current axis.above origin)},anchor=north east}
    }
}
 \newcommand\ForAuthors[1]
  \newcommand\comment[1]{}
\title{Extensions of $\mathcal{KL}$ and Lyapunov Functions for Discrete-time Dynamical System \red{Peak} Analysis}
\author{Assalé Adjé\footnote{mail: assale.adje@univ-perp.fr}\\ LAboratoire de Modélisation et Pluridisciplinaire et Simulations\\
LAMPS\\ Université de Perpignan Via Domitia\\ France
}
\date{}
\def\funset{\mathbb{\Gamma}}
\def\fset{\Omega([0,1])}
\def\secfunh{\funset_{\rm fun}}
\def\secfunb{\funset_{\rm sc}}
\def\inv#1#2{\displaystyle{{#1}_{\left(#2\right)}^{-1}}}
\def\Fnu{\mathfrak{F}_{\nu}}
\def\mm{\mathbb 1}
\def\Idd{\mathrm{Id}}
\def\defNU#1{\mathcal{D}\left(#1\right)}
\def\newop#1{\overline{N_{#1}}}
\def\conv{\operatorname{conv}}
\def\rr{\mathbb R}
\def\rd{\rr^d}
\def\nn{\mathbb N}
\def\xin{X^\mathrm{in}}
\def\fin{\operatorname{Fin}\left(\xin\right)}
\def\fini#1{\operatorname{Fin}\left(#1\right)}
\def\osp#1{\overline{#1_{\xin}}}
\def\norm#1{\| #1\|}
\def\ivt{I_{\xin,T}^{\varphi}}
\def\scivt{\operatorname{SC}_{\xin,T}^{\varphi}}
\def\ccdom{\mathrm{SIC}}
\def\agmx{\operatorname{Argmax}}
\def\Max{\operatorname*{Max}}
\def\aff{\mathrm{Aff}}
\def\gs{\Delta^s}
\def\klcls{\mathcal{K}\mathcal{L}}
\def\kcls{\mathcal{K}}
\def\kclsi{\mathcal{K}_\infty}
\def\lcls{\mathcal{L}}
\def\klgen{\mathcal{KL}_{\rm gen}}
\def\bigkse{\mathbf{K}}
\def\bigks{\mathbf{K}^s}
\def\dom{\operatorname{dom}}
\def\nuopt{\nu_{\rm opt}}
\def\kopt{k_{\rm opt}}
\def\xopt{x_{\rm opt}}
\def\gls#1{{\mathrm R}_{#1}}
\def\gsls#1{\gls{#1}^s}
\def\IR{\mathbb{I}\left(\rr\right)}
\def\Idd{\mathrm{Id}}
\def\res{\mathcal{S}}
\def\PSD{\mathcal{PSD}}
\def\red#1{#1}
\def\blue#1{#1}
\def\afct{\mathfrak{a}}
\def\bfct{\mathfrak{b}}
\def\xll{x^{ll}}
\def\xul{x^{ul}}
\def\xlu{x^{lu}}
\def\xuu{x^{uu}}
\def\philes{\varphi_{\ell}}
\def\xles{X_\ell}
\def\xinles{\xin_{\ell}}
\def\theles{\theta_\ell}
\def\epsles{\overline{\varepsilon}}
\def\omeles{\omega}
\def\omelesopt{\omeles_{\rm opt}}
\def\lyaples{V_{\ell}}
\def\osples{\overline{{\lyaples}_{\xinles}}}
\def\osps#1#2{\overline{{#1}_{#2}}}
\def\lexp{L_e}
\newtheorem{assumption}{Assumption}
\newtheorem{proposition}{Proposition}
\newtheorem{defi}{Definition}
\newtheorem{theorem}{Theorem}
\newtheorem{corollary}{Corollary}
\newtheorem{problem}{Problem}
\newtheorem{lemma}{Lemma}
\newtheorem{remark}{Remark}
\newtheorem{example}{Example}
\begin{document}
\maketitle

\begin{abstract}
In this paper, we extend two classes of functions that are classically involved in asymptotic stability analyses.\red{The purpose of this extension is} to study a maximization problem on the reachable values of a discrete-time dynamical system. This particular maximization problem is called a \red{peak} computation problem. The problem \red{consists in finding} a couple composed of an initial state and a time that maximizes a given function over states. The paper focuses on the time component of the optimal solution, which is an integer as the time is discrete. \red{We apply a method developed in previous papers to compute an upper bound of the greatest index maximizer of a real sequence. This previous method is based on a formula that requires a pair of a strictly increasing and continuous function on $[0,1]$ and a convergent geometric sequence that provides an upper bound of the analyzed sequence. This pair is proven to exist}. However, in practice, the computation cannot be \red{done in a general setting}. In this paper, we developed two alternative methods. The first is based on discontinuous and non-strictly increasing/decreasing $\mathcal{KL}$-like functions called $\klgen$ functions. We prove that the existence of a $\klgen$ upper bound is equivalent to the existence of a pair of a strictly increasing and continuous function on $[0,1]$ and a convergent geometric sequence. The construction of a strictly increasing continuous function from a $\klgen$ function requires an extension of the famous Sontag's lemma. Finally, we construct a new type of Lyapunov function that is well designed for our \red{peak} computation problem. These functions are called Opt-Lyapunov functions. Opt-Lyapunov functions are \red{suitable as} we establish an equivalence theorem between the existence of an Opt-Lyapunov function and that of a pair of a strictly increasing and continuous function on $[0,1]$ and a convergent geometric sequence. The construction of \red{an} Opt-Lyapunov function from a pair of a strictly increasing and continuous function on $[0,1]$ and a convergent geometric sequence is inspired by the Yoshizawa construction of Lyapunov functions.
\end{abstract}

{\bf Keywords}: Discrete-time, Peak Computation Problems, Supremum of real sequences, Lyapunov functions, $\mathcal{KL}$ functions.

\section{Introduction}

In this paper, we study the problem of {\it \red{peak}} computation for discrete-time dynamical systems in finite-dimensional state spaces. A \red{peak} computation problem can be viewed as a particular maximization problem for which the constraint set is the reachable value set of a discrete-time dynamical system. The objective function of this optimization problem is a function of the states. This particular maximization problem can be used to model various situations in engineering, economics, physics, and biology... For example, in population dynamics, for a predator-prey system, one may be interested in the maximal size of each species, even if the system asymptotically oscillates between several equilibria. This problem boils down to maximizing each coordinate of the state variable separately over the system modeling the population dynamics. 

Besides its importance in analyzing critical concrete situations, the literature on \red{peak} analysis problems in the community of dynamical systems analysis is \red{quite limited}. The \red{peak} computation problem appears in a discrete-time setting for linear systems, where the objective function is the Euclidean norm~\cite{ahiyevich2018upper}. This problem also appears in the analysis of stable linear systems with general quadratic objective functions~\cite{DBLP:journals/jota/Adje21,adje13052025}. A slightly different version of the problem was encountered in~\cite{ahmadi2024robust}. The difference arises from the fact that the state variable is constrained to remain in a given set. The problem was studied in a continuous-time setting and algebraically in a polynomial environment in~\cite{miller2020peaksafety}. In ~\cite{miller2020peaksafety}, the authors address the problem of verifying safety properties (see e.g., ~\cite{belta2017formal} for verification problems on discrete-time systems) for dynamical systems and formulate the safety verification problem as a \red{peak} computation problem. Safety verification problems aim to prove that the trajectories of a system are contained in a safe set. A safety verification problem can be translated into a \red{peak} computation problem when the safe set is a sublevel of \red{sufficiently regular function}. In this case, the objective function of the \red{peak} computation problem is associated with the sublevel set. A safety analysis from the \red{peak} computation point of view was also proposed for switched systems ~\cite{adje2017proving} and for program analysis in~\cite{adje2015property}. For the safety analysis of dynamical systems, one can think that a reachability analysis (see e.g., ~\cite{rakovic2006reachability} and references therein) equivalent to a feasibility analysis in our context is sufficient to prove some properties of the dynamical system. However, maximality in our context is \red{synonymous} with criticality. The optimal value in this situation can be viewed as the value which penalizes the most the system with respect to some criteria and 
defines the \red{extreme condition} in which the system can enter.

In~\cite{miller2020peaksafety} the authors are interested in computing an upper bound of the \red{peak} optimal value for the continuous case. The techniques used are based on sum-of-squares (SOS) and linear matrix inequality (LMI) formulations (the interested reader can consult~\cite{Lasserre_2015} for those subjects). The same goal motivated the authors of~\cite{ahiyevich2018upper}. They limited their study to linear systems and norm objective functions and directly used semidefinite programming (see~\cite{ben2001lectures} for further details on the subject). Reference ~\cite{ahmadi2024robust} cannot be fairly compared \red{because} of the additional constraint. Moreover, the studied systems are linear or piecewise linear, and the objective functions are linear. The goal of the current paper is to provide the theoretical bases to solve exactly a \red{peak} computation problem in a general way. The developments made in this paper can be viewed as a theoretical generalization of works done in~\cite{DBLP:journals/jota/Adje21,adje13052025,adje2025kllyap} \red{and a specialization of}~\cite{adje2025maximizationrealsequences} \red{ to specific sequences of optimal values arising in peak analysis computations}. 

In this paper, we propose to study \red{peak} computation problems from \red{extensions of} classical tools of dynamical systems stability theory (for example, ~\cite[Chap. 4]{elaydiintroduction} for discrete time). This approach is based on a simple observation: the existence of peaks depends on the asymptotic behavior of the system. The importance of stability analysis is time-tested, and stability analysis is a prerequisite for the implementation of numerical methods. Although general stability properties can be difficult to obtain, asymptotic stability properties can be easily presented and understood from $\mathcal{KL}$ bounds (e.g., ~\cite{doi:10.1080/10236190902817844} or~\cite{NESIC20041025}), that is, the norms of the states are globally bounded from above by a $\mathcal{KL}$ function. However, proving that a candidate $\mathcal{KL}$ function is an upper bound over the norms of the state must be done for all integers, which can be difficult. In contrast, Lyapunov functions are functional certificates for system stability. Some results guarantee the existence of Lyapunov functions: a system is globally asymptotically stable if and only if there exists a Lyapunov function (see, e.g., \cite{khalil2002nonlinear}). Being a Lyapunov function is a timeless property that depends only on the dynamics. In a few advantageous situations, Lyapunov functions can be computed using numerical and optimization solvers (see the survey~\cite{giesl2015review}). 

Formally, the data of a \red{peak} computation problem are decomposed into an autonomous discrete-time dynamical system and an objective function $\varphi$ over states. The dynamical system consists of a set of initial conditions $\xin$ and the dynamics of the system, that is, a function $T$ that maps a state to a new state. The associated \red{peak} computation problem aims to maximize $\varphi(T^k(x))$ under the constraints $x\in \xin$ and $k$ is a natural integer. \red{An} optimal solution of a \red{peak} computation problem is then a couple $(\xopt,\kopt)$ where $\xopt\in\xin$ and $\kopt$ is a natural integer. Note that when an integer $k$ is fixed, maximizing $\varphi(T^k(x))$ over $x\in\xin$ is a standard maximization problem. The main difficulty of the \red{peak} computation problem is to compute an optimal $\kopt$, that is, an integer $n$ for which the supremum over $x\in\xin$ of $\varphi(T^n(x))$ is the greatest. \red{The optimal value of a peak computation problem can be interpreted as the sequence of the optimal value of standard maximization problems where the integer $k$ is fixed. From this interpretation, the integer element $\kopt$ of an optimal solution becomes an index for which the associated term maximizes the sequence. Therefore, we can apply the method developed in~\cite{adje2025maximizationrealsequences}. It consists in computing an upper bound of the (greatest) index maximizer of a real sequence that is bounded from above. The computation of this upper bound is based on a formula involving a pair $(h,\beta)$ where $h$ is a strictly increasing continuous function on $[0,1]$ and $\beta$ is a scalar belonging to $(0,1)$. The pair $(h,\beta)$ is also constrained to form a sequence $(h(\beta^k))_{k\in\nn}$ greater than the sequence of the suprema of $\varphi\circ T^k$ over $\xin$. To be applicable, the pair $(h,\beta)$ has to be \emph{useful}, that is, $h(0)$ must be strictly smaller than, at least, a supremum of $\varphi\circ T^k$ over $\xin$. In~\cite{adje2025maximizationrealsequences}, we proved that such a pair $(h,\beta)$ exists and can be  even chosen to be optimal, meaning that the pair permits finding the greatest optimal integer. In practice,  this pair is not computable}. Therefore, we propose two additional methods for constructing useful pairs. First, we extend the class of $\mathcal{KL}$ functions to the class of functions that are possibly discontinuous, unsigned (on the state variable but nonnegative on the time variable), increasing on states, and decreasing on time. We call this class of functions $\klgen$. As for classical $\mathcal{KL}$ functions, we use $\klgen$ functions as upper bounds on $\varphi(T^k(x))$ (for the classical case with $\klcls$, $\varphi$ is a norm). We establish an equivalence theorem, where we prove that a useful pair $(h,\beta)$ can be constructed from a $\klgen$ upper bound function, and we can construct $\klgen$ upper bounds from useful pairs $(h,\beta)$. The construction of useful pairs $(h,\beta)$ from $\klgen$ upper bound functions relies on an extension of Sontag's result~\cite[Prop. 7]{SONTAG199893} for $\klgen$. Finally, we propose a new concept of Lyapunov functions
adapted for the peaks computation problem, namely, the Opt-Lyapunov functions. The word adapted means that the existence of an Opt-Lyapunov function is equivalent to the existence of a useful pair $(h,\beta)$. Once again, the proofs of all implications are constructive. The construction of an Opt-Lyapunov function from a useful pair $(h,\beta)$ is inspired by the Yoshizawa construction of Lyapunov functions~\cite{yoshi}. The main difference between classical Lyapunov functions and this new concept is that we allow Opt-Lyapunov functions to have several zeros and take infinite values. These two new possibilities make the use of classical comparison functions irrelevant (see e.g., ~\cite{DBLP:journals/mcss/Kellett14} for a comprehensive survey). In our context, when we have access to classical Lyapunov functions, we can link, \red{when $\varphi$ is continuous}, $\varphi$ with classical Lyapunov functions using comparison functions. Without this link, we must develop a concept of weaker relations between $\varphi$ and an Opt-Lyapunov function. In this paper, we introduce the notion of a certificate of compatibility to overcome the loss of links.  

In summary, this study makes two main contributions.
\begin{itemize}  
	\item \red{We generalize the concept of $\klcls$ functions to $\klgen$ adapted to peaks computation problems for which we can formulate peaks computation problems as a $\klgen$ upper bounds existence}.
	\item \red{We introduce a new concept of Lyapunov functions well designed for peaks computation problem for which the comparison functions approach is replaced by a compatibility notion}.  
\end{itemize}   

The paper is organized as follows. Section~\ref{statement} presents the \red{peak} computation problem addressed in this paper and the main difficulty related to the search for an integer as an optimal solution \red{of the problem}. In Section~\ref{sec:leslieexample}, we introduce an illustrative example to which we apply the novel techniques in the following sections. This example is the simplest version of the Leslie model for studying the dynamics of a population decomposed into age classes. Then, in Section~\ref{klsection}, we introduce the new class $\klgen$. We prove that the existence of $\klgen$ upper bounds on $\varphi(T^k(x))$ is equivalent to the existence of a useful pair. In Section~\ref{newlyapunov}, we present the concept of Opt-Lyapunov functions
well-designed for our maximization problem. The classical comparison functions associated with classical Lyapunov functions are replaced by a new notion called compatibility. We also prove that the existence of a compatible Opt-Lyapunov function is equivalent to the existence of a useful pair. In Section~\ref{summarysec}, we summarize the results obtained, highlighting the concrete constructions between useful pairs of strictly increasing and continuous functions on $[0,1]$, convergent geometric sequences, $\klgen$ class, and Opt-Lyapunov functions.  
\vspace{0,2cm}

\noindent {\bf Notations}: $\rr$ stands for the set of reals, $\rr_+$ the set of nonnegative reals, $\rr^*$ the set of nonzero reals and $\rr_+^*$ the set of strictly positive reals. The vector space of vectors of $d$ reals is denoted by $\rd$. The set of natural integers is denoted by $\nn$ whereas $\nn^*$ denotes the set of nonzero natural integers.

\section{Problem Statement And Motivated Example}
\label{statement}
\subsection{The motivation : a discrete-time peak computation problem}
\subsubsection{Definition of the problem}
Let us consider a discrete-time dynamical system $(\xin,T)$ on $\rd$ where
\begin{itemize}
\item $\xin$ is the nonempty subset of $\rd$ of initial conditions;
\item $T$ is a nonzero self-map on $\rd$ that updates the state-variable.  
\end{itemize}
The system is autonomous and evolves without control, disturbances, or external inputs. In this paper, we are interested in the reachable values that maximize a given function. Then, given $\varphi:\rd\to\rd$, we define the associated {\it discrete-time peak computation problem} as follows:
\begin{equation}
\label{pcpprob}
\tag{DPCP}
\begin{array}{lcl}
\displaystyle{\Max_{x,k}} & &\displaystyle{\varphi(T^k(x))}\\
           & \text{ s. t.}& \displaystyle{x\in\xin, k\in\nn}
           \end{array} 
\end{equation}
where $T^k$ is the $k$-fold composition of $T$ if $k$ is not null, and the identity if $k=0$.
 
Like any optimization problem, to solve a problem of the form~\eqref{pcpprob}, we must compute
\begin{itemize}
\item its optimal value, i.e., $\sup\{\varphi(T^k(x)): x\in\xin,\ k\in\nn\}$;
\item its optimal solutions, i.e., couples $(\overline{x},\overline{k})\in \xin\times \nn$ such that\[\varphi(T^{\overline{k}}(\overline{x}))=   \sup\{\varphi(T^k(x)): x\in\xin,\ k\in\nn\}.\]
\end{itemize}
An optimal solution $(x,k)$ of Problem~\eqref{pcpprob} is thus composed of an initial condition $x$ leading to a peak and its associated date $k$ of peak-realization.

Without loss of generality, we can suppose that $\varphi(0)=0$; otherwise, we change $\varphi$ for $\varphi-\varphi(0)$ and then we add $\varphi(0)$ at the end of the optimization procedure.
This is formulated in Assumption~\ref{assum:varphi}.
\begin{assumption}
\label{assum:varphi}
$\varphi(0)=0$.
\end{assumption}

\subsubsection{A problem on integers}
The optimal value of Problem~\eqref{pcpprob} can be described using a sequence of optimal values. Indeed, we can introduce a static classical optimization problem in which integer $k$ is fixed as follows:
\begin{equation}
\label{pcpstatic}
\tag{$P_k$}
\begin{array}{lcl}
\displaystyle{\Max_{x}} & &\displaystyle{\varphi(T^k(x))}\\
           & \text{ s. t.}& \displaystyle{x\in\xin}
           \end{array} 
\end{equation}
Then, we define the sequence $\nu=(\nu_k)_{k\in\nn}$ for all $k\in\nn$ by:
\begin{equation}
\label{nudef}
\nu_k:=\sup\{\varphi(T^k(x)): x\in\xin\}
\end{equation}
and its supremum:
\begin{equation}
\label{nuopt} 
\nuopt:=\sup\{\nu_k:k\in\nn\}.
\end{equation}
Naturally, if the optimal value of Problem ~\eqref{pcpstatic} is infinite, which is the same as $\nu_k=+\infty$ for some $k\in\nn$, then the optimal value of Problem~\eqref{pcpprob} is infinite. We assume that the following holds: 
\begin{assumption}
\label{mainassum}
For all $k\in\nn,\ \nu_k=\sup\left\{\varphi(T^k(x)) : x\in\xin\right\}$ is finite.
\end{assumption}
A simple situation where Assumption~\ref{mainassum} holds is when $\xin$ is a bounded set, $T$ maps bounded sets to bounded sets, and $\varphi$ is upper semicontinuous. \blue{The existence of an optimal solution for Problem~\eqref{pcpstatic} is actually not mandatory to ensure Assumption~\ref{mainassum}.}

From these notations, $\nuopt$ is equal to the optimal value of Problem~\eqref{pcpprob}, whereas $\nu_k$ is the optimal value of static optimization Problem ~\eqref{pcpstatic}. Solving Problem~\eqref{pcpprob} involves computing $\nuopt$, an integer $\overline{k}$ such that $\nu_{\overline k}=\nuopt$ and an optimal solution $x\in\xin$ of Problem~\eqref{pcpstatic}. In this paper, we focus on the characterization and computation of the integer $\overline{k}$. Static problems of the form~\eqref{pcpstatic} are standard problems in optimization theory. The computation of an optimal solution for these problems relies on an optimization solver when a suitable solver is available. In theory, some special instances of Problem~\eqref{pcpstatic} can be solved numerically, more or less efficiently. For example, when $\varphi \circ T^k$ is concave and $\xin$ is convex, numerous methods exist (see e.g., ~\cite{ben2001lectures,bertsekas2015convex}), or when $\varphi\circ T^k$ is polynomial and $\xin$ is a basic semialgebraic set (see e.g., ~\cite{anjos2011handbook,Lasserre_2015}). In practice, the difficulty of resolving Problem~\eqref{pcpstatic} precedes the optimization procedure and remains in the writing of the optimization problem itself. Indeed, obtaining a closed form for $T^k$ or at least for $T^k(\xin)$ can be an ambitious task. For example, for logistic sequences, closed forms are only known for particular cases (see, e.g.,\cite{gutierrez2010}). Some specific difference equations or systems require intensive efforts to obtain closed forms for $T^k$ (see, e.g.,~\cite{ELSAYED2012378,articleH,stevic2014representation} and references therein). Similarly, obtaining an exact description of $T^k(\xin)$ when $\xin$ is an infinite set incurs a high computational cost. For example, in~\cite{magron2015semidefinite}, the authors approximate the polynomial image of a basic semialgebraic set obtained from the $L^1$ limit of a sequence of semidefinite programs. In some specific cases, $T^k$ can be described explicitly. When $T$ is linear or conjugate to a linear map (i.e., $T=B^{-1}\circ A\circ B$ for a bijection $B$ and a linear map $A$), we can compute $T^k$ using an eigendecomposition if $T$ is diagonalizable (or in Jordan form). When $T$ is linear, $T^k(\xin)$ is always a polytope if $\xin$ is so. Hence, when $T$ is linear and $\xin$ is a polytope, Problem~\eqref{pcpstatic} becomes a linear program. While the theory developed in this paper is independent of the practical constructions of $T^k$ or $T^k(\xin)$, the computational illustration proposed in this paper is limited to an unstable linear system combined with a nonlinear objective function $\varphi$. A more general practical approach will be considered in future works.

The approach developed in~\cite{adje2025maximizationrealsequences} consists in computing an upper bound of the greatest integer maximizer of Problem~\eqref{pcpprob}. In~\cite{adje2025kllyap}, we employed the notion \emph{stopping integer}, which is an integer $K$ such that 
\begin{equation}
\label{eq:stopping}
\nuopt=\max\{\nu_k : k=0,\ldots,K\} \text{ and } \sup_{k> K}\nu_k< \nuopt.
\end{equation}
We define the set of stopping integers and the smallest stopping integer as follows:
\[
\gs_\nu:=\left\{k\in\nn: \max_{0\leq j\leq k} \nu_j>\sup_{j>k} \nu_j\right\}\text{ and } \bigks_u:=\inf\gs_\nu.
\]
The set of stopping integers can be empty, and we can characterize its nonemptiness from the analysis of the limit superior of $\nu$ and thus introduce the set:
 \[
\gsls{\nu}:=\left\{k\in\nn: \nu_k>\limsup_{n\to +\infty} \nu_n\right\}.
\]
Moreover, we define the set of integer maximizers of Problem~\eqref{pcpprob} as
\[
\agmx(\nu):=\{k\in\nn: \nu_k=\nuopt\}.
\]
Recall that $\nuopt$ is finite if and only if the limit superior of $\nu$ is finite or equal to $-\infty$. This basic analysis can be completed using the sets $\gs_\nu$ and $\gsls{\nu}$.
\begin{proposition}
\label{argmaxsimple}
If $\nuopt<+\infty$. The following assertions are made:
\begin{enumerate}
\item $\gs_\nu\neq\emptyset\iff \gsls{\nu}\neq \emptyset$;
\item $\gs_\nu=\emptyset\iff \sup_{k\in\nn} \nu_k=\limsup_{n\to +\infty} \nu_n$;
\item If $\gs_\nu\neq\emptyset$ then $\bigks_\nu=\max\agmx(\nu)$. 
        \end{enumerate}
\end{proposition}
One way to compute an element of $\gsls{\nu}$ is, given a term $\nu_k$, to look for an integer $K\in\nn$ such that
\[
\nu_j<\nu_k,\text{ for all } j\geq K.
\]
Such integers $K$ cannot be smaller than $\bigks_\nu$ and are thus stopping integers. The problem is to convert the term $\nu_k$ into a stopping integer. The most natural sequences allowing this conversion (which is a bijection) between terms and indices are the geometric sequences. However, assuming that $\nu$ is a geometric sequence is extremely restrictive. Instead of directly using geometric sequences, we use bijective deformations of positive convergent geometric sequences as the upper bounds of $\nu$. 

Let $\IR$ be the set of all intervals on $\rr$ \blue{with nonempty interiors}. We define, for a function $f$ on reals, the set of intervals where $f$ is strictly increasing and continuous that is,
\[
\ccdom(f)=\{I\in\IR : f:I\mapsto f(I) \text{ is strictly increasing and continuous }\}. 
\]
If $I\in\ccdom(f)$, $f$ is bijective from $I$ to $f(I)$ and we denote by $\inv{f}{I}$ the inverse of $f$ on $I$. Note that $\inv{f}{I}$ is also strictly increasing and continuous from $f(I)$ to $I$. Finally, we pay particular attention to functions that are strictly increasing and continuous on $[0,1]$ and we write 
\[
\fset:=\{h:\rr\mapsto \rr : [0,1]\in\ccdom(h)\}.
\] 
A convergent geometric sequence is required to complete the upper bound on $\nu$ and we introduce
\[
\funset(\nu):=\{(h,\beta)\in \fset \times (0,1): \nu_k\leq h(\beta^k),\ \forall\, k\in\nn\} 
\]
and
\[ 
\secfunh(\nu):=\{h\in \fset : \exists\, \beta\in (0,1) \text{ s.t. } (h,\beta)\in \funset(\nu)\}.
\]
Finally, we introduce for $h\in\secfunh(\nu)$:
\[ 
\secfunb(\nu,h):=\{\beta\in (0,1) : (h,\beta)\in \funset(\nu)\}.
\]
From $(h,\beta)\in\funset(\nu)$, we can define
\begin{equation}
	\label{eq:fnu}
\Fnu(k,h,\beta):=\dfrac{\ln \inv{h}{[0,1]}(\nu_k)}{\ln(\beta)} \text{ if } \nu_k>h(0)
\end{equation}
for which we have
\[
j\geq \lfloor \Fnu(k,h,\beta)\rfloor \implies \nu_j< \nu_k
\]
The point requiring special consideration is the strict positivity of $\inv{h}{[0,1]}(\nu_k)$ or, equivalently, the fact that $\nu_k>h(0)$. This depends on the choice of $h$ and we define the notion of \emph{usefulness}.
\begin{defi}[Useful Strictly Increasing Continuous Functions]
\label{useful}
Suppose that $\gsls{\nu}\neq \emptyset$. A function $h\in \secfunh(\nu)$ is said to be useful for $\nu$ if the set 
\begin{equation}
\label{residual}
\res(\nu,h):=\{k\in\nn : \nu_k>h(0)\}
\end{equation}
is nonempty. 

By extension, a pair $(h,\beta)\in\funset(\nu)$ is useful for $\nu$ if $h$ is useful for $\nu$.
\end{defi}
\begin{remark}
Note that as $h\in\funset(\nu)$ is continuous, we have $\limsup_{k\to +\infty} \nu_k\leq h(0)$ and thus $\res(\nu,h)\subset\gsls{\nu}$.
\end{remark}
One of the main results of~\cite{adje2025maximizationrealsequences} is the existence of a useful element in $\funset(\nu)$ when $\gsls{\nu}$ is nonempty.
\begin{theorem}
\label{mainexistence}
The following statements are true.
\begin{enumerate}
\item $\nuopt<+\infty$ if and only if $\funset(\nu)\neq \emptyset$;
\item $\gsls{\nu}\neq \emptyset$ if and only if there exists $(h,\beta)\in\secfunh(\nu)$ that is useful for $u$.
\end{enumerate}
\end{theorem}
\subsubsection{Problem statement}
Even if we have proved that $\funset(\nu)$ is nonempty, the proof furnished in~\cite{adje2025maximizationrealsequences} exhibits an affine function $h\in\secfunh(\nu)$ for which the slope and y-intercept depend on $\bigks_\nu$ and $\nuopt$. This leads to considering alternative approaches to construct an element in $\funset(\nu)$.
\begin{problem}
\label{computehpb}
Construct a couple $(h,\beta)\in \funset(\nu)$ useful for 
$\nu$ when $\gsls{\nu}\neq \emptyset$.
\end{problem}
In~\cite{adje2025kllyap}, we proposed to build $(h,\beta)\in\funset(\nu)$ from a $\klcls$ stability certificate or a Lyapunov function. Problems of the form~\eqref{pcpprob} can have a finite optimal value and are such that $\gsls{\nu}$ is nonempty without admitting a classical $\klcls$ stability certificate or a classical Lyapunov function. This is the case for the example developed in Section~\ref{sec:leslieexample}. To go further Problem~\ref{computehpb}, we are interested in a full characterization of the elements of $\funset(\nu)$ that subsumes the previous results based on classical stability tools.
Thus, the main contribution of this paper is the characterization of the elements of $\funset(\nu)$ from an extension of $\klcls$ and the Lyapunov functions. We will prove that solving Problem~\ref{computehpb} is equivalent to constructing a weaker version of $\klcls$ certificate and a weaker version of the Lyapunov functions. We will prove that we can construct an element $(h,\beta)\in\funset(\nu)$ from any extended $\klcls$ certificate and that an extended $\klcls$ certificate can be constructed from any $(h,\beta)\in\funset(\nu)$. The same results hold for our extended class of Lyapunov functions.

\subsection{Classical \protect$\klcls$ and Lyapunov approaches}

	The use of $\klcls$ and Lyapunov functions outside the scope of stability theory seems to be atypical and we bring some intuitions about the use in the context of solving Problem~\ref{computehpb}. In fact, classical $\klcls$ and Lyapunov functions furnish more or less naturally a function in $\fset$ and $\beta\in (0,1)$. To obtain an element of $\funset(\nu)$ from a $\klcls$ certificate or a Lyapunov function requires a restrictive inequality or the continuity of $\varphi$. From the positivity of $\klcls$ and Lyapunov functions, usefulness holds if and only if the sequence $\nu$ has a positive term. All the details on the approach can be found in~\cite{adje2025kllyap}.

\subsubsection{The classical \protect$\klcls$ approach}

First, recall that a $\klcls$ function $\gamma:\rr_+\times \rr_+\mapsto \rr_+$ is such that
\begin{enumerate}
\item For all $t\in\rr_+$, $s\mapsto \gamma(s,t)\in\kclsi$ i.e. the set of strictly increasing continuous functions $\rr_+\to \rr_+$ that are zero at zero and whose the limit at $+\infty$ is $+\infty$;
\item For all $s\in\rr_+$, $t\mapsto \gamma(s,t)\in\lcls$ i.e. the set of decreasing continuous functions $\rr_+\to \rr_+$ whose the limit at $+\infty$ is $0$;  
\end{enumerate} 
The dynamical system $(\xin,T)$ admits a $\klcls$ certificate if and only if there exists $\gamma\in\klcls$ such that for all $x\in\xin$ and all $k\in\nn$
\begin{equation}
\label{eq:kclsclassic}
\norm{T^k(x)}\leq \gamma(\norm{x},k)
\end{equation}
If the inequality~\eqref{eq:kclsclassic} holds, the famous result of Sontag~\cite[Proposition 7]{SONTAG199893} directly provides a potential pair $(h,\beta)$. Recall that the result asserts that for any $\gamma\in\klcls$  there exists $\theta_1,\theta_2\in\kclsi$ verifying $\gamma(s,t)\leq \theta_1(\theta(s)e^{-t})$ for all $(s,t)\in\rr_+\times \rr_+$. Then, if $\xin$ is bounded, we define
\begin{equation}
\label{eq:alasontag}
h:s \to \theta_1(\theta_2(s\sup_{x\in\xin} \norm{x}))\in\fset.
\end{equation}
The difficulty is to exploit the inequality~\eqref{eq:kclsclassic} to obtain $(h,e^{-1})\in\funset(\nu)$. This requires an additional assumption on $\varphi$. For example, if $\varphi$ is continuous, we obtain the desired upper bound from the famous Hahn/Khalil result~\cite{hahn1967stability} and \cite[Lemma 4.3/Appendix C.4]{khalil2002nonlinear}. This result asserts that for any positive definite\footnote{a function is positive definite if it is strictly positive except at 0 where it is null} continuous coercive\footnote{a function $f$ is said to be coercive (or radially unbounded) if $\lim_{\norm{x}\to +\infty} f(x)=+\infty$} function $f$ there exist $\overline{\alpha_1},\overline{\alpha_2}\in\kclsi$ satisfying:
\begin{equation}
\label{khalil}
\overline{\alpha_1}\circ \norm{\cdot}\leq f\leq \overline{\alpha_2}\circ \norm{\cdot}.
\end{equation}
From Assumption~\ref{assum:varphi} $\max\{\varphi,\norm{\cdot}\}$ is positive definite, coercive, and continuous if $\varphi$ is. Hence
\begin{equation}
\label{eq:hfromkl}
(\alpha_\varphi\circ h,e^{-1})\in\funset(\nu) \text{ where } \alpha_\varphi\in\kclsi\text{ is such that }
\varphi\leq \max\{\varphi,\norm{\cdot}\}\leq \alpha_\varphi\circ \norm{\cdot}.
\end{equation}
We can relax the continuity assumption on $\varphi$ and instead imposing that, for some $\psi:\rd\to \rr_+$ such that $\sup_{x\in\xin}\psi(x)<+\infty$
\begin{equation}
\label{eq:extendedkl}
\varphi(T^k(x))\leq \gamma(\psi(x),k),\text{ for all } x\in\xin \text{ and } k\in\nn.
\end{equation}
This consists in replacing in the inequality~\eqref{eq:kclsclassic} the norm on the left-hand side by $\varphi$ and the norm on the right-hand side by a nonnegative function $\psi$ with a strictly positive supremum on $\xin$. In this case 
\begin{equation}
\label{eq:hfromextkl}
(s\mapsto \theta_1(\theta_2(s\sup_{x\in\xin} \psi(x))),e^{-1})\in\funset(\nu).
\end{equation}
Finally, as stated above, usefulness in the context of $\klcls$ certificate is equivalent to the existence of a strictly positive term for $\nu$.    
\subsubsection{The Lyapunov approach}
In~\cite{adje13052025}, we consider Lyapunov functions to solve Problem~\eqref{pcpprob}. In~\cite{adje13052025}, the dynamical system was a stable affine system, and the objective function $\varphi$ was a quadratic function. Recently, in~\cite{adje2025kllyap}, we generalized the approach for all discrete-time systems that admit a Lyapunov function. Recall that a function $V:\rd \to \rr_+$ is a (global) Lyapunov function if and only if
\begin{equation}
\label{eq:positivitylyap}
\alpha_1 \circ \norm{\cdot}\leq V\leq \alpha_2 \circ \norm{\cdot} \text{ for some }\alpha_1,\alpha_2\in\kclsi
\end{equation}
and 
\begin{equation}
\label{eq:decreaselyap}
V\circ T\leq \lambda V \text{ and  for some } \lambda\in (0,1)
\end{equation}
Note that~\eqref{eq:positivitylyap} implies that $V$ is positive definite.

Inequality~\eqref{eq:decreaselyap} implies that 
\begin{equation}
\label{eq:lyaptoh}
V(T^k(x))\leq \lambda^k V(x) \text{ for all }x\in\xin.
\end{equation}
If $\xin$ is bounded and not reduced to $\{0\}$, then $\sup_{x\in\xin} V(x)$ is finite and strictly positive. Hence, similarly to the $\klcls$ approach, we define
\[
h:s\mapsto s \sup_{x\in\xin} V(x)\in \fset.
\] 
To use $h$ as an element of $\secfunh(\nu)$, we need to make additional assumptions. We can simply impose that $\varphi\leq V$ and from~\eqref{eq:lyaptoh}
\begin{equation}
\label{eq:hfromlyap}
(h,\lambda)\in\funset(\nu).
\end{equation} 
If $\varphi$ is continuous, the comparison between $\varphi$ and $V$ relies on the Hahn-Khalil result described in Eq. {khalil}. If $\varphi$ is continuous then, using the same notation as in the $\klcls$ approach
\begin{equation}
\label{eq:hfromextlyap}
(\alpha_\varphi \circ \alpha_1^{-1}\circ h,\lambda)\in\funset(\nu)
\end{equation}
where $\alpha_1$ is the $\kclsi$ function appearing in~\eqref{eq:positivitylyap}.

In ~\cite{adje2025kllyap}, we slightly generalized the Lyapunov approach to allow local Lyapunov functions. Local means that the inequalities depicted in~\eqref{eq:positivitylyap} and~\eqref{eq:decreaselyap} hold only on an (forward) invariant set for the dynamics $T$. Moreover, for $h\in\secfunh(\nu)$ and $k\in\res(\nu,h)$, the function $\beta\mapsto \Fnu(k,h,\beta)$ defined in Equation~\eqref{eq:fnu} increases. Therefore, we should consider the smallest element of $\beta\in\secfunb(\nu,h)$. For a Lyapunov function $V$ for $T$, we should consider the \emph{operator ratio}
\begin{equation}
	\label{eq:opratio}
N_T(V):=\sup_{x\neq 0} \dfrac{V(T(x))}{V(x)}
\end{equation}	 
as the scalar $\lambda$ in Equations~\eqref{eq:hfromlyap} and~\eqref{eq:hfromextlyap}. The study of the suitable properties of operator ratios in our context was developed in~\cite{adje2025kllyap}. 
\subsection{Problem statement continues}
In Figure~\ref{figintro}, we insist on the fact that the existence of a $\klcls$ certificate or a Lyapunov function combined with an additional assumption on $\varphi$ only implies the existence of a pair $(h,\beta)\in\funset(\nu)$. In this paper, we exhibit two proper classes of functions ensuring direct and converse theorems for the condition $\gsls{\nu}\neq\emptyset$. The first class extends the $\klcls$ class of functions. We obtain the equivalence between the existence of a weaker $\klcls$ certificate and $(h,\beta)\in\funset(\nu)$. The second class extends the set of Lyapunov functions for which the existence of a weaker Lyapunov function is equivalent to the existence of $(h,\beta)\in\funset(\nu)$. The constructed pair $(h,\beta)$ is useful when $\gsls{\nu}\neq \emptyset$.
\begin{figure}[h]
\fbox{
\begin{minipage}{0.97\textwidth}
\begin{center}
\begin{tikzpicture}
\draw (-5.8,1) ellipse (4.5cm and 1.5cm);
\begin{scope}
\node[draw,text width=3cm,text centered] at (-8,1) {Lyapunov\\ or\\ $\klcls$ certificate};
\draw (-6,1) node{+};
\node[draw,text width=3cm,text centered] at (-4,1) {$\varphi$ smaller than the stability certificate\\ or \\ $\varphi$ continuous};
\end{scope}
\draw[thick,double,->] (-1.5,0.5) -- (1,-1);
\node[draw,text width=3cm,text centered] at (2.8,-1.8) {$\exists (h,\beta)\in\funset(\nu)$ useful for $\nu$};
\draw (-1.5,-1.1) node[below]{\cite[Th.2]{adje2025maximizationrealsequences}};
\draw (-0.2,0) node[right]{\cite[Th.4,Th.5,Th.6,Th.7]{adje2025kllyap}};
\draw[thick,double,<->] (-4,-1.8) -- (1,-1.8);
\node[draw,text width=2cm,text centered] at (-5.3,-1.78) {$\gsls{\nu}\neq \emptyset$};
\draw[dashed] (-10,-5.4) rectangle (1,-2.4);
\begin{scope}
\node[draw,text width=3cm,text centered] at (-8,-4) {weaker Lyapunov\\ functions\\ / \\ weaker $\klcls$\\ functions};
\draw[thick,double,<->] (-6.3,-4.2) -- (0.9,-2.7);
\node[text width=3cm,text centered] at (-0.1,-5.1) {in this paper};
\end{scope}
\end{tikzpicture}
\end{center}
\end{minipage}
}
\caption{Previous results obtained using $\klcls$ and Lyapunov functions and the results developed in this paper.}
\label{figintro}
\end{figure}
   
\section{A running example from Leslie population evolution models}
\label{sec:leslieexample}
In this paper, we illustrate the new tools that we develop using a running example. This example comes from population ecology and is a model for studying the evolution of a population according to some period. The model studied here is called a \emph{Leslie model} and was introduced in 40s~\cite{6201d988-ef79-3e59-8ac9-055f74401218,9d8eed9f-80b1-3385-89ce-e466360a8bb4}.  
\subsection{Introduction}
Leslie population models analyze the growth of the  population of a species structured in age classes. A class represents a stage of development of the species (juvenile, sub-adult, adult, etc.). From this structuration, the population is modeled as a vector of the sizes of the population of each age class. The evolution of the population is computed from a matrix called the \emph{Leslie matrix}. The vector of the population sizes at a given year (or another time step) is the image of the vector of the population sizes of the previous year by the Leslie matrix. The coefficients of the first row are the fertility rates associated with each age class. Hence, those coefficients compute the number of individuals in the first age class that can be generated from each age class. The next rows attribute the survival rates to each age class. This rate aims to compute the number of individuals that pass from the current age class to the next. Hence, the survival rate belongs to $[0,1]$. We also note that this survival rate is the unique nonzero coefficient of the row. 

Usually, the questions raised about Leslie population models are about the asymptotic behavior of the population, the maximal growth rates of the population of each class, and the asymptotic proportions of the population of each age class with respect to the total population. The question concerning the proportions can be addressed in a different way: what is the proportion of the juvenile (the first class) with respect to the others? In this paper, we are interested in the maximum proportion.    
To illustrate our approach in the simplest way, we consider two age-class Leslie population models. This means that we only consider the juvenile and adult populations, and we are interested in the maximum proportion of juveniles over adults. 

\subsection{Peak computation problem description}
We consider a matrix $L$ parameterized by scalars $a,b$ and $c$, defined as follows:
\[
L=\begin{pmatrix}
a & b\\
c & 0
\end{pmatrix}
\]
The scalar $a$ represents the number of juveniles that can be generated per period by one juvenile, whereas $b$ represents the number of juveniles that can be generated per period by one adult. The scalar $c$ measures the proportion of juveniles that become adults at the end of the period. A value of 0 means that adults disappear after the period considered. We suppose that one juvenile and one adult can produce strictly more than one juvenile. Moreover, we recall that $c$ encodes the percentage of juveniles that become adults and thus cannot exceed one. We suppose that some juvenile individuals survive, and so $c$ cannot be null. More formally, we assume that
\begin{equation}
	\label{eq:condlesliecoef}
a\geq 1,\ b>1 \text{ and } c\in (0,1)
\end{equation}
To deal with several possible initial configurations simultaneously, we define
\[
\xinles:=[l_1,u_1] \times [l_2,u_2]\text{ where } 0<l_1<u_1 \text{ and } 0<l_2<u_2.
\]
In the sequel, we use the following notation for the vertices of $\xin$:

\begin{equation}
	\xll :=(l_1,l_2)^\intercal,\ \xul:=(u_1,l_2)^\intercal,\ \xlu:=(l_1,u_2)^\intercal\text{ and } \xuu:=(u_1,u_2)^\intercal
\end{equation}
As the proportion of the juvenile over the adults is concerned, our objective function is 
defined as follows:
\[
\philes: \rr^2\ni x=(x_1,x_2)\mapsto\left\{ \begin{array}{cr}\dfrac{x}{y} & \text{ if } y\neq 0\\
\\
 0 & \text{ if } y=0
\end{array}\right.
\] 
Let us introduce $\pi_1:\rr^2\ni x\mapsto x_1$ and $\pi_2:\rr^2\ni x\mapsto x_2$. Then we obtain the following peak computation problem  
\begin{equation}
\label{eq:runningdpcp}
\tag{$DPCP_{\ell}$}
\begin{array}{rc}
\displaystyle{\Max_{x,k}}& \philes(L^k x)\\
             s.t. &\left\{\begin{array}{l}
                    x\in\xinles\\
                    k\in\nn
                    \end{array}
                    \right.
\end{array}
=
\begin{array}{rc}
\displaystyle{\Max_{x\in\rr^2,k}}& \dfrac{\pi_1(L^k x)}{\pi_2(L^k x)}\\
 \\
             s.t. &\left\{\begin{array}{l}
                    x\in\xinles\\
                    k\in\nn
                    \end{array}
                    \right.
\end{array}
\end{equation}
Then we define:
\[
\omeles=(\omeles_k)_{k\in\nn} \text{ where }\omeles_k=\sup_{x\in\xin} \philes(L^k x)\text{ and }\omelesopt=\sup_{k\in\nn} \omeles_k.
\]

\subsection{A rapid preliminary analysis}
\subsubsection{Asymptotic divergence}
\label{subsubsec:rapidanal}
The eigenvalues of $L$ are
\[
\lambda_1:=\dfrac{a-\sqrt{a^2+4bc}}{2} \text{ and } \lambda_2:=\dfrac{a+\sqrt{a^2+4bc}}{2}.
\]
From~\eqref{eq:condlesliecoef} it is easy to see $\lambda_2>1$, $\lambda_1<0$ and $\lambda_1/\lambda_2\in (-1,0)$. The system is thus not asymptotically stable and does not admit a $\klcls$ upper bound or a classical Lyapunov function. 

Moreover, for any starting point $x\in\xinles$, $(\pi_1(L^k x))_{k\in\nn}$ and $(\pi_2(L^k x))_{k\in\nn}$ are strictly increasing (from $k=2$ for $\pi_2$) and both tend to $+\infty$ as $k$ goes to $+\infty$. We give details about this in the Appendix.
\subsubsection{Existence of a stopping integer}
We introduce the following two auxiliary functions:
\begin{equation}
	\label{eq:abdef}
\afct:\rr^2\ni x\mapsto \lambda_2x_2-c x_1 \text{ and } \bfct:\rr^2\ni x\mapsto cx_1-\lambda_1 x_2\enspace. 
\end{equation}
Note that for all $x\in (\rr_+^*)^2$, for all $k\in\nn$, 
\begin{equation}
\label{eq:abprop}
\afct(x)\left(\dfrac{\lambda_1}{\lambda_2}\right)^k+\bfct(x)=cx_1\left(1-\left(\dfrac{\lambda_1}{\lambda_2}\right)^k\right)+\lambda_1 x_2\left(\left(\dfrac{\lambda_1}{\lambda_2}\right)^{k-1}-1\right)
\end{equation}
and thus is strictly positive as $\lambda_1<0$ and $\lambda_1/\lambda_2\in (-1,0)$.

Now, we show that $\omelesopt$ is finite and $\gsls{\omeles}\neq \emptyset$. Using an eigendecomposition of $L$, $\afct$ and $\bfct$, we have for all $x\in\xinles$ and all $k\in\nn$
\begin{equation}
\label{eq:eigenvarphi}
\philes(L^k x)=\dfrac{\lambda_1^{k+1}\afct(x)+\lambda_2^{k+1}\bfct(x)}{c\left(\lambda_1^k\afct(x)+\lambda_2^k\bfct(x)\right)}
=\dfrac{\lambda_2}{c}\dfrac{\afct(x)\left(\dfrac{\lambda_1}{\lambda_2}\right)^{k+1}+\bfct(x)}{\afct(x)\left(\dfrac{\lambda_1}{\lambda_2}\right)^k+\bfct(x)}.
\end{equation}
From~\eqref{eq:abprop}, both the numerator and denominator are strictly positive. 

As $\xinles$ is the Cartesian product of intervals, we can compute the maximum of $\philes \circ L^k$ from the study of the sign of the partial derivatives. We conclude that for all $k\in\nn$
\begin{equation}
	\label{eq:nuksolleslie}
\omeles_{2k}=\max_{x\in \xinles} \philes(L^{2k} x)=\philes(L^{2k} \xul)\text{ and }\omeles_{2k+1}=\max_{x\in \xinles} \philes(L^{2k+1} x)=\philes(L^{2k+1}\xlu).
\end{equation}
From~\eqref{eq:eigenvarphi}, we observe that for all $x\in(\rr_+^*)^2$, $\lim_{k\to +\infty}\philes(L^k x)=\lambda_2/c$. Hence, $\limsup_{k\to +\infty} \omeles_k=\lim_{k\to +\infty} \omeles_k=\lambda_2/c<+\infty$. We conclude that $\omelesopt<+\infty$.

To show that $\gsls{\omeles}\neq \emptyset$, we have to check whether $\omeles_k>\lambda_2/c$ for some $k\in\nn$. This can be deduced from the properties of $\afct$. Let $x^*\in\xinles$. If $\afct(x^*)<0$, we get $\omeles_0\geq \philes(x^*)>\lambda_2/c$ and $0\in\gsls{\omeles}$. Now, we observe that $\afct\circ L =\lambda_1\afct $ on $\rr\times\rr_+^*$. Hence, if $\afct(x^*)>0$, we have $\afct(L x^*)<0$, $\omeles_1\geq \philes(L x^*)>\lambda_2/c$ and thus $1\in\gsls{\omeles}$. If $\afct(x^*)=0$. From the equality $\afct\circ L=\lambda_1\afct$, we have for all $k\in\nn$, $\afct(L^k x^*)=0$. In this case, $\philes(L^k x^*)=\lambda_2/c$ for all $k\in\nn$. To conclude that $\gsls{\omeles}\neq \emptyset$, it suffices to show that $\xinles$ cannot be included in $\afct^{-1}(\{0\})$. By definition, $\xinles$ has a nonempty interior, whereas $\afct^{-1}(\{0\})$ has an empty one; hence $\afct(x)\neq 0$ for some $x\in\xinles$.
Finally, to compute $\omelesopt$, the initial vectors $x$ are removed such that $\afct(x)=0$ and we introduce
\begin{equation}
\label{eq:newinitial}
\xles:=\xinles\cap\{x\in\rr^2 : x_1/x_2\neq \lambda_2/c\}=\xin\cap \{x\in\rr^2: \alpha(x)\neq 0\}.
\end{equation}
\subsubsection{An alternative simpler solution}
\label{subsubsec:alternativesol}
We propose a simple alternative approach to solve the peak computation problem~\eqref{eq:runningdpcp} as follows. We simply use the fact that for all $k\in\nn$ and all $x\in\xinles$,
\[
\philes(L^{2k+2} x)-\philes(L^{2k} x)=\dfrac{\lambda_2}{c}\dfrac{\left(\dfrac{\lambda_1}{\lambda_2}\right)^{2k} \afct(x)\bfct(x)\left(1+\left(\dfrac{\lambda_1}{\lambda_2}\right)^{3}-\dfrac{\lambda_1}{\lambda_2}-\left(\dfrac{\lambda_1}{\lambda_2}\right)^{2}\right)}{\left(\left(\dfrac{\lambda_1}{\lambda_2}\right)^{2k+1}\afct(x)+\bfct(x)\right)\left(\left(\dfrac{\lambda_1}{\lambda_2}\right)^{2k}\afct(x)+\bfct(x)\right)}.
\]
From~\eqref{eq:abprop}, the denominator of $\philes(L^{2k+2} x)-\philes(L^{2k} x)$ is still strictly positive. The numerator is strictly negative when $\afct(x)$ is positive. When $\afct(x)$ is strictly positive, we consider instead $\philes(L^{2k+3} x)-\philes(L^{2k+1}x)=\philes(L^{2k+2} Lx)-\philes(L^{2k}L x)$ which is strictly negative from the equality $\afct\circ L=\lambda_1 \afct$. Then we conclude that, if $\afct(x)<0$, $(\philes(L^{2k} x))_{k\in\nn}$ is strictly decreasing and, if $\afct(x)>0$, $(\philes(L^{2k+1} x))_{k\in\nn}$ is strictly decreasing. Finally, this leads to $\omelesopt=\max\{\omeles_0,\omeles_1\}$.

\subsection{The illustration of our work} 
\label{subsec:numericaldata}
For the running example, we will construct
\begin{enumerate}
\item a weaker $\klcls$ certificate;
\item a weaker Lyapunov function.
\end{enumerate} 
for the discrete-time peak computation problem~\eqref{eq:runningdpcp}. These constructions will be completed independently of the results obtained in Subsubsection~\ref{subsubsec:alternativesol}. To be more explicit, we will apply our techniques to explicit numerical data. The following $2\times 2$ Leslie matrix has been found in~\cite{RODIN1988299}.
\begin{equation}
	\label{eq:nummatrix}
L:=\begin{pmatrix}
	1 & 4\\
	0.5 & 0
\end{pmatrix}.  
\end{equation}  
From this matrix, we obtain
\[
a=:1,\ b:=4,\ c:=0.5, \lambda_1:=-1 \text{ and } \lambda_2:=2.
\]
Hence, $\lambda_2/c=4$, $(\rr_+^*)^2\ni x\mapsto \afct(x):=2 x_2-0.5x_1$ and $(\rr_+^*)^2\ni x\mapsto \bfct(x):=0.5 x_1+x_2$.

In our approach, the choice of the initial condition set is important. We propose a specific initial conditions set for which $\afct$ takes positive and negative values.
We thus define:
\[
\xin:=[10;40]\times [5;50]
\]
that leads to
\[
\xll:=(10,5)^\intercal,\ \xlu:=(10,50)^\intercal,\ \xul:=(40,5)^\intercal\text{ and } \xuu:=(40,50)^\intercal.
\]

\section{Extended \protect$\klcls$ Theory for Discrete-Time Systems Maximization}
\label{klsection}

In this paper, we are interested in the resolution of Problem~\eqref{pcpprob}, for which the underlying dynamical system $x_{k+1}=T(x_k)$ is not necessarily asymptotically stable, as shown in the example provided in Section~\ref{sec:leslieexample}. However, Inequality~\eqref{eq:kclsclassic} holds if and only if the system $x_{k+1}=T(x_k), x_0\in\rd$ is said to be {\it uniformly global asymptotic stability} (UGAS) (see, for example~\cite[Prop. 1]{NESIC20041025} or~\cite[Th. 2]{doi:10.1080/10236190902817844}). On the other hand,  Inequality~\eqref{eq:extendedkl} provides useful information on the asymptotic behavior of the system: the limit superior of $\nu_k$ is smaller than $\inf_{k\in\nn} \gamma(\sup_{x\in\xin} \psi(x),k)$ and thus finite. However, Inequality~\eqref{eq:extendedkl} forces our sequence $\nu$ to have a positive term to obtain the usefulness of the pair proposed in~\eqref{eq:hfromextkl}. Therefore, Inequality~\eqref{eq:extendedkl} constitutes our starting point. Thus, we define the class of functions $\klgen$ that extends the class of functions $\klcls$. The class $\klgen$ permits the construction of a useful element $(h,\beta)\in\funset(\nu)$ when $\gsls{\nu}\neq \emptyset$ and the analog of Inequality~\eqref{eq:extendedkl} is verified by replacing $\gamma$ with a $\klgen$ function. Contrary to the classical $\klcls$ certificate approach, we prove a converse theorem: if $\gsls{\nu}\neq \emptyset$, then the analog of Inequality~\eqref{eq:extendedkl} holds for some $\gamma\in\klgen$. We also provide an extension to the famous Sontag's result~\cite[Proposition 7]{SONTAG199893} recalled above for the $\klgen$ class. It allows to obtain an element $h\in\funset(\nu)$ such that $e^{-1}\in\secfunb(\nu,h)$ when the analogue of~\eqref{eq:extendedkl} is satisfied. 

We recall that the data are as follows: 
\begin{itemize}
\item an initial conditions set $\xin\subseteq \rd$;
\item a nonzero map $T:\rd\mapsto \rd$;
\item a function $\varphi:\rd\mapsto \rr$ such that $\varphi(0)=0$. 
\end{itemize}
We recall that we define $\nuopt=\sup\{\nu_k:k\in\nn\}$ where $\nu_k=\sup\{\varphi(T^k(x):x\in\xin\}$. Naturally, we suppose that Assumption~\ref{mainassum} holds. To simplify the notation of the maximal value on $\xin$ of a function $f:\rd\mapsto \rr\cup\{+\infty\}$, we introduce the following new notation:
\[
\osp{f}:=\sup\{f(x) : x\in\xin\}
\]
and the set of functions that are bounded from above on $\xin$:
\[
\fin:=\{f:\rd\mapsto \rr\cup\{+\infty\} : \osp{f}<+\infty\}.
\]
With those notations, $\nu_k=\osp{(\varphi\circ T^k)}$ and by following Assumption~\ref{mainassum}, we have for all $k\in\nn$, $\varphi\circ T^k \in\fin$.     

\subsection{Revisiting \protect$\mathcal{KL}$ functions : the class \protect$\klgen$}
\label{newkcls}

One of the limitations of using classical $\klcls$ functions in~\eqref{eq:extendedkl} is the positivity of $\klcls$ functions. To ensure the usefulness of the element $(h,\beta)$ of Equation~\eqref{eq:hfromextkl}, our sequence $\nu$ must have a positive term. Then, we should relax the positivity on the state-variable part of $\klcls$ functions (and not on the time variable). We also relax the strict increase (or decrease) and continuity required to be a $\mathcal{KL}$ function. 

\begin{defi}[The Class $\klgen$]
A function $\gamma:\rr\times \rr_+\mapsto \rr$ belongs to $\klgen$ if
\begin{itemize}
  \item for all $t\in\rr_+$, $s\mapsto \gamma(s,t)$ is \red{increasing};
 \item for all $s\in\rr$, $t\mapsto \gamma(s,t)$ is \red{decreasing}.
\end{itemize}
\end{defi}

\begin{example}[A pertinent $\klgen$ for Leslie models]
	For the example introduced in Section~\ref{sec:leslieexample}, we compute a $\klgen$ upper bound for $(\xles,L,\philes)$. In this example, we construct a family of $\klgen$ functions for which some of them will be $\klgen$ upper bounds for $(\xles,L,\philes)$. For this construction, we introduce the following auxiliary function. 
	\begin{equation}
		\label{eq:Hfct}
	H:\{(x,y)\in\rr^2: x+y\neq 0\}\ni (u,v)\mapsto \dfrac{\lambda_2}{c} \dfrac{\dfrac{\lambda_1}{\lambda_2}v+u}{v+u}.
	\end{equation}
	We observe that we have for all $x\in \xles$ (defined in~\eqref{eq:newinitial}) and all $k\in\nn$
	\begin{equation}
		\label{eq:varphiH}
		\philes(L^k x)=\dfrac{\lambda_1^{k+1}\afct(x)+\lambda_2^{k+1}\bfct(x)}{c\left(\lambda_1^k\afct(x)+\lambda_2^k\bfct(x)\right)}=
		\dfrac{\lambda_2}{c}\dfrac{\dfrac{\lambda_1}{\lambda_2} \left(\dfrac{\lambda_1}{\lambda_2}\right)^k+\dfrac{\bfct(x)}{\afct(x)}}{\left(\dfrac{\lambda_1}{\lambda_2}\right)^k+\dfrac{\bfct(x)}{\afct(x)}}=H\left(\dfrac{\bfct(x)}{\afct(x)},\left(\dfrac{\lambda_1}{\lambda_2}\right)^k\right).
	\end{equation}
	From~\eqref{eq:varphiH}, we propose to construct a $\klgen$ function $\rr\times \rr_+\ni (s,t)\mapsto \gamma(s,t)$ from $H$. Indeed, the final purpose of the constructed $\klgen$ function is to provide an upper bound of $\philes\circ L^k$ on $\xles$. From the analysis of the sign of the partial derivatives of $H$, we remark that
	\begin{itemize}
		\item for all $v>0$ (resp. $v<0$), $u\mapsto H(u,v)$ strictly increases on $(-\infty,-v)$ and on $(-v,+\infty)$ (resp. strictly decreases on $(-\infty,-v)$ and on $(-v,+\infty)$);
		\item for all $u<0$ (resp. $u>0$), $v\mapsto H(u,v)$ strictly increases on $(-\infty,-u)$ and on $(-u,+\infty)$ (resp. strictly decreases on $(-\infty,-u)$ and on $(-u,+\infty)$). 
	\end{itemize} 
	From these monotonicity properties, $H$ is a candidate to be a $\klgen$ function on negative values of the state-variable $s$ and with positive second arguments. Therefore, we will consider $|\lambda_1/\lambda_2|^t$ rather than $(\lambda_1/\lambda_2)^t$ as second argument in $H$. For positive values $s$, the $\klgen$ function $\gamma(s,t)$ cannot be equal to $H(s,|\lambda_1/\lambda_2|^t)$ and we will propose a strictly increasing affine function. Moreover, $H(s,|\lambda_1/\lambda_2|^t)$ is not defined when $s=-|\lambda_1/\lambda_2|^t$ and we have to avoid those points. Finally, the main objective of this $\klgen$ function is to provide an upper bound over $\philes\circ L^k$ on $\xles$. Unfortunately, for all $s\in (-|\lambda_1/\lambda_2|^t,0)$, $H(s,|\lambda_1/\lambda_2|^t)$ is strictly negative and thus does not provide an upper bound for $\philes\circ L^k$ which is strictly positive on $\xles$. We also remark that $s\mapsto H(s,|\lambda_1/\lambda_2|^t)$ is not bounded from above on $(-\infty,-|\lambda_1/\lambda_2|^t)$. Consequently, we artificially bound it, allowing only values $s$ smaller than $-|\lambda_1/\lambda_2|^t-\varepsilon$ for a given strictly positive $\varepsilon$. 
	We will put constraints over $\varepsilon$ when we impose the $\klgen$ function to be an upper bound of $\philes\circ L^k$ on $\xles$. Finally, given $\varepsilon>0$, $H$ is exploited at $(s,|\lambda_1/\lambda_2|^t)$ only when $s<-|\lambda_1/\lambda_2|^t-\varepsilon$. Otherwise, we use an affine function increasing in $s$ and in $|\lambda_1/\lambda_2|^t$ proved to be greater than $H(-|\lambda_1/\lambda_2|^t-\varepsilon,|\lambda_1/\lambda_2|^t)$. Finally, we get a family of $\klgen$ functions defined for all $\varepsilon>0$ as follows,
	\begin{equation}
		\label{eq:klgenepsdef}
		\gamma_\varepsilon:\rr\times\rr_+\ni (s,t)\mapsto \left\{\begin{array}{lr}
			H\left(s,\left|\dfrac{\lambda_1}{\lambda_2}\right|^t\right) & \text{ if } s\leq -\left|\dfrac{\lambda_1}{\lambda_2}\right|^t-\varepsilon\\
			\left(s+\left|\dfrac{\lambda_1}{\lambda_2}\right|^t+\varepsilon\right)+H\left(-\left|\dfrac{\lambda_1}{\lambda_2}\right|^t-\varepsilon,\left|\dfrac{\lambda_1}{\lambda_2}\right|^t\right) & \text{ otherwise }  
		\end{array}\right.
	\end{equation}
	We prove in the Appendix that for all $\varepsilon>0$, $\gamma_\varepsilon\in\klgen$. 
	
	Note that we could replace $H(-|\lambda_1/\lambda_2|^t-\varepsilon,|\lambda_1/\lambda_2|^t)$ by a strictly positive function increasing with respect to $|\lambda_1/\lambda_2|^t$ and greater than $H(-|\lambda_1/\lambda_2|^t-\varepsilon,|\lambda_1/\lambda_2|^t$. The current choice of affine function makes $\gamma_\varepsilon$ continuous, which is not mandatory to belong to $\klgen$. 
	
	Applying ~\eqref{eq:klgenepsdef} to the numerical data of Subsection~\ref{subsec:numericaldata}, we have
	\[
	\gamma_\varepsilon:\rr\times\rr_+\ni (s,t)\mapsto \left\{\begin{array}{lr}
		4 \dfrac{0.5^{t+1}+s}{0.5^t+s} & \text{ if } s\leq -(0.5)^t-\varepsilon\\
		\\
		s+0.5^t(1+6\varepsilon^{-1})+\varepsilon+4 & \text{ if } s>-0.5^t-\varepsilon
	\end{array}
	\right.
	\]
	\qed
\end{example}
\begin{defi}[$\klgen$-$\fin$ upper bounds]
A pair $(\gamma,\theta)\in\klgen\times \fin$ is a $\klgen$-$\fin$ upper bound for $(\xin,T,\varphi)$ if and only if 
\[
\forall\, k\in\nn,\ \forall\, x\in\xin,\ \varphi(T^k(x))\leq \gamma(\theta(x),k) 
\]
Moreover, $(\gamma,\theta)$ is said to be useful for $\nu$ such that $\gsls{\nu}\neq \emptyset$ if and only if the set
\[
\left\{k\in \gsls{\nu} : \inf_{t\geq 0} \gamma(\osp{\theta},t)<\nu_k\right\}
\]
is nonempty.
\end{defi}
\begin{example}[A useful $\klgen$-$\fini{\xles}$ upper bound for $(\xles,L,\philes)$]
	\label{ex:klgenfinleslie}
	We now propose, as $\gsls{\omeles}\neq \emptyset$, a useful $\klgen$-$\fini{\xles}$ upper bound for the peak computation problem introduced in~\eqref{eq:runningdpcp}. We show that for well-chosen $\varepsilon$, $\gamma_\varepsilon$ introduced in~\eqref{eq:klgenepsdef} combined with $\theles:=-|\bfct/\afct|$ that belongs to $\fini{\xles}$ provide a $\klgen$-$\fini{\xles}$ upper bound for $(\xles,L,\philes)$.
	
	The choice of $\varepsilon$ depends on supremum of $\theles$ over $\afct^{-1}((-\infty,0))$ and over $\afct^{-1}((0,+\infty))$.
	Using the partial derivatives of $-|\bfct/\afct|$, we conclude that
	\[
		\sup_{\substack{\afct(x)<0\\ x\in X}}\theles(x)=\left\{
		\begin{array}{lr}
			\theles(\xul)& \text{ if }\afct^{-1}((-\infty,0))\neq \emptyset\\
			-\infty &  \text{ otherwise}
		\end{array}\right.
		\text{ and } 
		\sup_{\substack{\afct(x)>0\\ x\in X}} \theles(x)=\left\{
		\begin{array}{lr}
			\theles(\xlu) & \text{ if } \afct^{-1}((0,+\infty))\neq \emptyset\\
			-\infty &  \text{ otherwise}
		\end{array}\right.
	\]
	For all $x\in \xles$ such that $\afct(x)<0$, we have $\theles(x)<-1$. Moreover, the set $\afct^{-1}((-\infty,0))$ is nonempty if and only if $\afct(\xul)<0$. Thus, the supremum of $\theles$ over $\afct^{-1}((-\infty,0))$ is strictly smaller than -1. For all $x\in \xles$ such that $0<\afct(x)<\bfct(x)$, we also have $\theles(x)<-1$. However, if the set $\{x\in \xles: \afct(x)\geq \bfct(x)\}$ (included in $\afct^{-1}((0,+\infty))$) is nonempty, for all $\varepsilon>0$, we can find $x\in \xles$ such that $0<\afct(x)<\bfct(x)$ and $-1-\varepsilon\leq \theles(x)$. Note that $\{x\in \xles: \afct(x)\geq \bfct(x)\}$ is nonempty if and only if $\afct(\xlu)\geq \bfct(\xlu)$ (or equivalently $\theles(\xlu)\geq -1$). Finally, for all $x\in \xles$ such that $\afct(x)\geq \bfct(x)$, we have $\theles(x)\geq -1$. From ~\eqref{eq:abprop}, we also have $-|\lambda_1/\lambda_2|=\lambda_1/\lambda_2\geq \theles(x)$ for all $x\in \xles$ such that $\afct(x)\geq \bfct(x)$.
	
	As a corollary, we have
	{\everymath{\displaystyle}
		\[
		\osps{\theles}{\xles}=\max_{x\in \xles} \theles(x)=\left\{
		\begin{array}{lr}
			\theles(\xul) & \text{ if } \afct(\xlu)\leq 0\\
			\theles(\xlu)& \text{ if } \afct(\xul)\geq 0\\
			\max\{\theles(\xul),\theles(\xlu)\} & \text{ if } \afct(\xul)<0\text{ and } \afct(\xlu)>0
		\end{array}\right. .
		\]
	}
    From the computation of $\osps{\theles}{\xles}$, we define:
	\begin{equation}
		\label{eq:varepssupthe}
	\varepsilon^{<0}=\left\{\begin{array}{lr} \dfrac{1}{2}(-1-\theles(\xul)) & \text{ if } \afct(\xul)<0\\
		+\infty & \text{ if } \afct(\xul)\geq 0\end{array}\right.
	\text{ and } 
	\varepsilon^{>0}=\left\{\begin{array}{lr} \dfrac{1}{2}(-1-\theles(\xlu)) & 0<\afct(\xlu)<\bfct(\xlu)\\
		\dfrac{1}{2}\left(\dfrac{\lambda_1}{\lambda_2}-\theles(\xlu)\right) & \text{ if } \afct(\xlu)\geq\bfct(\xlu)\\
		+\infty & \text{ if } \afct(\xlu)\leq 0\end{array}\right.
	\end{equation}
	and 
	\begin{equation}
		\label{eq:finalesples}
	\epsles=\min\{\varepsilon^{<0},\varepsilon^{>0}\}.
	\end{equation}
	Finally, we have for all $x\in \xles$ and all $k\in\nn$
	\begin{equation}
		\label{eq:finalklgenfin}
		\philes(L^k(x))\leq \gamma_{\epsles}\left(\theles(x),k\right)\text{ where } \theles=-|\bfct/\afct|.
	\end{equation}
	We insist on the fact that $\gamma_{\epsles}\in\klgen$ and $\theles\in\fini{\xles}$. The details of the proof of~\eqref{eq:finalklgenfin} is provided in the Appendix.
	
	From sufficiently large values of $t$, $\gamma_{\epsles}(\osps{\theles}{\xles},t)=H(\osps{\theles}{\xles},|\lambda_1/\lambda_2|^t)$ and thus 
	$\inf_{t\geq 0} \gamma_{\epsles}(\osps{\theles}{\xles},t)=H\left(\osps{\theles}{\xles},0\right)=\lambda_2/c$. It follows that $\{k\in\gsls{\omeles}:\inf_{t\geq 0} \gamma_{\epsles}(\osps{\theles}{\xles},t)<\omeles_k\}=\gsls{\omeles}$ and finally
	the pair $(\gamma_{\epsles},\theles)$ is useful.
	
	Applying our computations to the numerical data of Subsection~\eqref{subsec:numericaldata}, we get
	\[
	\afct(\xlu)=95>0 \text{ and } \afct(\xul)=-10<0 
	\]
	As $\bfct(\xlu)=55$ and $\bfct(\xul)=25$, we have
	\[
	\osps{\theles}{\xles}=\max\{\theles(\xlu),\theles(\xul)\}=\max\{-11/19,-2.5\}=-11/19
	\]
	and as $\afct(\xlu)>\bfct(\xlu)$, 
	\[
	\epsles=\min\{0.5(-1+2.5),0.5(-0.5+11/19)\}=3/76
	\]
	Finally, we obtain:
	\[
	\philes({\lexp}^k x)\leq 	\gamma_{\epsles_e}(\theles(x),0.5^t)=\left\{\begin{array}{lr}
		4 \dfrac{0.5^{k+1}+\theles(x)}{0.5^k+\theles(x)} & \text{ if } \theles(x)\leq -(0.5)^t-\dfrac{3}{76}\\
		\\
		\theles(x)+153\times 0.5^t+\dfrac{307}{76} & \text{ if } \theles(x)>-0.5^t-\dfrac{3}{76}
	\end{array}
	\right..
	\]
	\qed
	\end{example}

Following the notations introduced earlier, we have the following equivalence:
\[
\nuopt<+\infty \iff \sup_{k\in\nn} \varphi\circ T^k\in\fin.
\]
We recall that:
\[
\nuopt<+\infty \iff \limsup_{k\to +\infty} \nu_k=\limsup_{k\to +\infty}\sup_{x\in\xin} \varphi(T^k(x))=\limsup_{k\to +\infty}\ \osp{(\varphi\circ T^k)}\in\rr\cup\{-\infty\}
\]
We can establish the first result, which states that the existence of $\klgen$-$\fin$ upper bounds is equivalent to the fact that the sequence $\nu$ is bounded from above. This is an adaptation of classical results dealing with stability and $\mathcal{KL}$ upper bounds.  
\begin{theorem}[$\funset(\nu)$ to $\klgen$]
\label{htoklbounds}
The sequence $\nu=(\nu_k)_{k\in\nn}$ \red{satisfies $\nuopt<+\infty$} if and only if there exists a $\klgen$-$\fin$ upper bound $(\gamma,\theta)$ for $(\xin,T,\varphi)$. If, moreover, $\gsls{\nu}\neq\emptyset$ then $(\gamma,\theta)$ can be chosen to be useful for $\nu$.
\end{theorem}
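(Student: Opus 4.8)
The plan is to establish the two implications of the equivalence separately. The direction ``existence of a $\klgen$-$\fin$ upper bound $\Rightarrow\nu\in\Lambda$'' is a short monotonicity argument, while the reverse direction carries the content and is handled by an explicit, deliberately simple construction.

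For the first direction, I would take a $\klgen$-$\fin$ upper bound $(\gamma,\theta)$ for $(\xin,T,\varphi)$. Since $\theta\in\fin$, the quantity $M:=\osp{\theta}$ is finite and $\theta(x)\le M$ for every $x\in\xin$ (in particular $\theta(x)\in\rr$, so $\gamma(\theta(x),k)$ is well defined). Using that $s\mapsto\gamma(s,k)$ is nondecreasing gives $\varphi(T^k(x))\le\gamma(\theta(x),k)\le\gamma(M,k)$ for all $x\in\xin$ and $k\in\nn$, hence $\nu_k\le\gamma(M,k)$ after taking the supremum over $x\in\xin$. Using next that $t\mapsto\gamma(M,t)$ is nonincreasing gives $\nu_k\le\gamma(M,0)$ for every $k$, so $\sup_{k\in\nn}\nu_k\le\gamma(M,0)<+\infty$ (because $\gamma$ is real-valued), i.e. $\nu\in\Lambda$.

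For the reverse direction, assume $\nu\in\Lambda$. I would choose $\theta\equiv 0$, which lies in $\fin$ with $\osp{\theta}=0$, and define $\gamma\colon\rr\times\rr_+\to\rr$ by $\gamma(s,t):=\sup\{\nu_j:j\in\nn,\ j\ge t\}$. First I would check that $\gamma$ is well defined and finite-valued: for each $t$ the set $\{\nu_j:j\ge t\}$ is nonempty and, by Assumption~\ref{mainassum} together with $\nu\in\Lambda$, it is a set of reals bounded above by $\sup_k\nu_k<+\infty$. Since $\gamma$ does not depend on $s$ and is nonincreasing in $t$, it belongs to $\klgen$; and for all $x\in\xin$ and $k\in\nn$ one has $\varphi(T^k(x))\le\nu_k\le\sup_{j\ge k}\nu_j=\gamma(0,k)=\gamma(\theta(x),k)$, so $(\gamma,\theta)$ is a $\klgen$-$\fin$ upper bound.

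Finally, for the ``useful'' refinement when $\gsls{\nu}\neq\emptyset$, the crucial identity is $\inf_{t\ge 0}\gamma(\osp{\theta},t)=\inf_{t\ge 0}\gamma(0,t)=\inf_{n\in\nn}\sup_{j\ge n}\nu_j=\limsup_{j\to+\infty}\nu_j$, a value lying in $\rr\cup\{-\infty\}$ by Proposition~\ref{suplimsup}. Then the set entering the definition of usefulness is $\{k\in\gsls{\nu}:\limsup_{j}\nu_j<\nu_k\}$, which by the very definition of $\gsls{\nu}$ equals $\gsls{\nu}$ and hence is nonempty by hypothesis. The only place that needs real care---and the main, if modest, obstacle---is the bookkeeping with infinite values: that $\theta\in\fin$ forces $\theta$ to be finite on $\xin$, and that $\limsup_j\nu_j$ may equal $-\infty$, must both be tracked so that the constructed $\gamma$ stays real-valued and the identity above holds with no exceptional cases. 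The rest is routine, and the construction is chosen precisely so that one and the same pair $(\gamma,\theta)$ serves simultaneously as an upper bound and as a useful one.
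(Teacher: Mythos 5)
Your proof is correct, but the constructive direction takes a genuinely different route from the paper's. For ``$\nu\in\Lambda\Rightarrow$ existence of an upper bound'' the paper does not use the tail supremum: it invokes Prop.~\ref{nonemptyfun} to get a pair $(h,\beta)\in\funset(\nu)$ (and Prop.~\ref{emptybound} to get a useful one when $\gsls{\nu}\neq\emptyset$), then sets $\theta(x)=\sup_{k\in\nn}\varphi(T^k(x))$ and $\gamma(s,t)=\min\{s,h(\beta^t)\}$, so that $\osp{\theta}=\nuopt$ and $\inf_{t\geq 0}\gamma(\osp{\theta},t)=h(0)$, with usefulness following from $h(0)<\nu_k$ for $k\in\res(\nu,h)$. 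Your choice $\theta\equiv 0$ and $\gamma(s,t)=\sup\{\nu_j : j\in\nn,\ j\geq t\}$ is simpler and entirely self-contained: it avoids $\funset(\nu)$ altogether, and the usefulness clause becomes immediate since $\inf_{t\geq 0}\gamma(\osp{\theta},t)=\limsup_{j\to+\infty}\nu_j$ makes the relevant set literally equal to $\gsls{\nu}$ (including the case $\limsup_j\nu_j=-\infty$, which you correctly track). What you lose is the structural content: the theorem is named ``$\funset(\nu)$ to $\klgen$'' and the paper's construction is precisely the mapping \textcircled{\small 1} recorded in the summary (Figure~\ref{figsummary}), which transports a pair $(h,\beta)$ into a $\klgen$-$\fin$ certificate with a nontrivial dependence on the state variable through $\theta$; your $\gamma$ is constant in $s$, which the (weak) monotonicity in the definition of $\klgen$ permits but which carries no per-state information. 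One cosmetic remark: the printed definition of $\klgen$ swaps the roles of $s$ and $t$ (a typo), and both you and the paper's proof use the evidently intended reading, increasing in $s$ and decreasing in $t$. The first direction of your argument coincides with the paper's monotonicity computation.
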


\begin{proof}
Suppose that there exists a $\klgen$-$\fin$ upper bound $(\gamma,\theta)$ for $(\xin,T,\varphi)$. As $s\mapsto \gamma(s,t)$ is increasing for all $t\geq 0$, we have
for all $x\in\xin$, for all $k\in\nn$, $\gamma(\theta(x),k)\leq \gamma(\osp{\theta},k)$ and thus
$\nu_k=\sup\{\varphi(T^{k}(x)):x\in\xin\}\leq \gamma(\osp{\theta},k)$ for all $k\in\nn$. Now, taking the limsup on both sides leads to \[\limsup_{k\mapsto +\infty} \nu_k\leq \inf_{k\in\nn} \gamma(\osp{\theta},k)\leq \gamma(\osp{\theta},0)<+\infty\] as $t\mapsto \gamma(s,t)$ is decreasing for all $s\in\rr$. \red{This proves that $\nuopt<+\infty$}. 

Suppose that $\nu=(\nu_k)_{k\in\nn}$ has a finite supremum. From \red{the first statement of Th.~\ref{mainexistence}, $\funset(\nu)$} is nonempty and let $(h,\beta)$ be in $\funset(\nu)$. Then, we define for all $s\in\rr$ and $t\in\rr_+$, \[\gamma(s,t):=\min\{s,h(\beta^t)\}\] and for all $x\in\rd$, \[\theta(x):=\sup_{k\in\nn} \varphi(T^k(x)).\] It is clear that $\gamma\in\klgen$ as $\beta \in (0,1)$. Now by assumption, $\nuopt=\sup_{n\in\nn} \nu_n\in\rr$. Moreover, we have \[\nuopt=\sup_{n\in\nn} \sup_{x\in\xin} \varphi(T^n(x))=\sup_{x\in\xin} \sup_{n\in\nn} \varphi(T^n(x))= \osp{\theta}.\] By definition of $\theta$, we have for all $x\in\xin$, for all $k\in\nn$, $\varphi(T^k(x))\leq \theta(x)$. By definition of $(h,\beta)$, we have for all $k\in\nn$, $\nu_k\leq h(\beta^k)$. This is the same as for all $x\in\xin$ and for all $k\in\xin$, $\varphi(T^k(x))\leq h(\beta^k)$. We conclude that for all $x\in\xin$ and for all $k\in\nn$\[\varphi(T^k(x))\leq \min\{\theta(x),h(\beta^k)\}.\] Note that for \red{the} function $\gamma$ constructed as above, for all $s\in\rr$ \[\inf_{t\geq 0} \gamma(s,t)=\inf_{t\geq 0}\min\{s,h(\beta^t)\}=\min\{s,h(0)\}.\] This follows from the continuity of $h$, the fact $h$ is strictly increasing and $\beta\in (0,1)$.

If moreover, $\gsls{\nu}\neq\emptyset$, then from the second statement \red{of} \blue{Th~\ref{mainexistence}}, there exists $h\in\secfunh(\nu)$ useful for $\nu$. We can define $\gamma$ using this function $h$. Now, let $k\in\res(\nu,h)$. As $\nu_k\leq \nuopt=\osp{\theta}$, we conclude that \[\inf_{t\geq 0} \gamma(\osp{\theta},t)=\min\{\osp{\theta},h(0)\}=h(0)\] which ends the proof.  
\end{proof}
In the proof of Th.~\ref{htoklbounds}, we exploit \red{Th.~\ref{mainexistence} which links} the nonemptiness of the sets $\funset(u)$, $\res(u,h)$ and $\gsls{u}$. We can reformulate Th~\ref{htoklbounds} as Corollary~\ref{klvershsimple}.

\begin{corollary}
\label{klvershsimple}
The set $\funset(\nu)$ is nonempty if and only if there exists a $\klgen$-$\fin$ upper bound for $(\xin,T,\varphi)$.
If, moreover, there exists $h\in\secfunh(\nu)$ useful for $\nu$ then there exists a useful $\klgen$-$\fin$ upper bound for $(\xin,T,\varphi)$.
\end{corollary}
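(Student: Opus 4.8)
The plan is to deduce this statement directly from Theorem~\ref{htoklbounds} by translating its hypotheses and conclusions through the bridging results of Section~\ref{generalities}. Theorem~\ref{htoklbounds} is phrased in terms of the membership $\nu\in\Lambda$ and of the condition $\gsls{\nu}\neq\emptyset$, whereas the present statement is phrased in terms of the nonemptiness of $\funset(\nu)$ and of the existence of a useful $h\in\secfunh(\nu)$. So the whole argument amounts to applying the two ``dictionary'' results already established, namely Proposition~\ref{nonemptyfun} and Proposition~\ref{emptybound}.

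For the first sentence, I would first invoke Proposition~\ref{nonemptyfun}, which gives $\funset(\nu)\neq\emptyset \iff \nu\in\Lambda$, and then Theorem~\ref{htoklbounds}, which gives $\nu\in\Lambda \iff$ there exists a $\klgen$-$\fin$ upper bound for $(\xin,T,\varphi)$. Chaining the two equivalences closes the first claim with no further work.

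For the conditional clause, suppose $h\in\secfunh(\nu)$ is useful for $\nu$. Then $\secfunh(\nu)\neq\emptyset$, so by definition $\funset(\nu)\neq\emptyset$, hence $\nu\in\Lambda$ by Proposition~\ref{nonemptyfun}. Since $\nu\in\Lambda$ and a useful $h\in\secfunh(\nu)$ exists, Proposition~\ref{emptybound} yields $\gsls{\nu}\neq\emptyset$. The second part of Theorem~\ref{htoklbounds} then applies verbatim and furnishes a $\klgen$-$\fin$ upper bound that is useful for $\nu$.

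There is essentially no obstacle here beyond bookkeeping: the only point to check carefully is that ``useful'' on the two sides really does correspond, i.e.\ that the hypothesis $\gsls{\nu}\neq\emptyset$ of Theorem~\ref{htoklbounds} is equivalent to the existence of some $h\in\secfunh(\nu)$ useful for $\nu$, which is exactly the content of Proposition~\ref{emptybound}. Hence the corollary is a purely formal repackaging of Theorem~\ref{htoklbounds}.
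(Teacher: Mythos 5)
Your proposal is correct and follows exactly the route the paper intends: the paper itself states that Corollary~\ref{klvershsimple} is a reformulation of Theorem~\ref{htoklbounds} obtained by translating through Proposition~\ref{nonemptyfun} (linking $\funset(\nu)\neq\emptyset$ with $\nu\in\Lambda$) and Proposition~\ref{emptybound} (linking the existence of a useful $h\in\secfunh(\nu)$ with $\gsls{\nu}\neq\emptyset$). The bookkeeping in your conditional clause is exactly right, including the check that ``useful'' on both sides presupposes $\gsls{\nu}\neq\emptyset$.
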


In Th~\ref{htoklbounds} and in Corollary~\ref{klvershsimple}, we construct a useful $\klgen$-$\fin$ upper bound for $(\xin,T,\varphi)$ from a useful $(h,\beta)\in\funset(\nu)$. Now, \red{we have to check} whether it is possible to construct a useful $(h,\beta)\in\funset(\nu)$ from a useful $\klgen$-$\fin$ upper bound for $(\xin,T,\varphi)$. Thus, we provide a positive response. The result is based on  an extension of Sontag's result for the class $\klgen$. This permits to fix the value $\beta\in (0,1)$ to only consider problem on functions in $\fset$.  

\begin{theorem}[$\klgen$ to $\funset(\nu)$]
\label{KLversh}
Suppose that there exists a $\klgen$-$\fin$ upper bound for $(\xin,T,\varphi)$. Then there exists $h\in\fset$ such that $(h,e^{-1})\in\Gamma(\nu)$. If moreover, the  $\klgen$-$\fin$ upper bound for $(\xin,T,\varphi)$ is useful for $\nu$ then $h$ can be chosen to be useful for $\nu$. 
\end{theorem}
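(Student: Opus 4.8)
The plan is to follow the template of Theorem~\ref{klclassic}, replacing the classical Sontag Lemma~\ref{sontag} by an extension valid for the class $\klgen$. The first move is to eliminate the state variable: fix a $\klgen$-$\fin$ upper bound $(\gamma,\theta)$; since $\theta\in\fin$ the number $\osp{\theta}$ is finite, and since $\theta(x)\leq\osp{\theta}$ and $s\mapsto\gamma(s,t)$ is monotone in the direction exploited in the proof of Theorem~\ref{htoklbounds}, one gets $\varphi(T^k(x))\leq\gamma(\theta(x),k)\leq\gamma(\osp{\theta},k)$ for all $x\in\xin$ and $k\in\nn$. Taking the supremum over $\xin$ yields $\nu_k\leq g(k)$ for all $k$, where $g(t):=\gamma(\osp{\theta},t)$ is a decreasing, real-valued function of $t\geq 0$. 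Let $\ell:=\inf_{t\geq 0}g(t)=\lim_{t\to+\infty}g(t)\in[-\infty,g(0)]$; this is precisely the quantity $\inf_{t\geq 0}\gamma(\osp{\theta},t)$ appearing in the definition of a useful $\klgen$-$\fin$ upper bound, and $\limsup_k\nu_k\leq\ell$.

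The heart of the argument is the $\klgen$-version of Sontag's lemma, which I would state as: for every decreasing $g:\rr_+\to\rr$ with $\ell:=\inf g$ finite there exists $h\in\fset$ with $h(0)=\ell$ and $g(t)\leq h(e^{-t})$ for all $t\geq 0$. To prove it I pass to the variable $r=e^{-t}\in(0,1]$ and set $G(r):=g(-\ln r)$, which is nondecreasing and bounded, with $\inf G=\ell$ and $\sup G=g(0)$. Extend $G$ to $(0,2]$ by $G\equiv g(0)$ on $(1,2]$ and let $G^{+}$ be its right-continuous majorant (still nondecreasing and bounded, with limit $\ell$ at $0$). Define $\psi(r):=\tfrac1r\int_r^{2r}G^{+}(\rho)\,d\rho=\int_1^2 G^{+}(rs)\,ds$ on $(0,1]$. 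The substitution form makes it immediate that $\psi$ is nondecreasing in $r$; that $\psi(r)\geq G^{+}(r)\geq G(r)$, because the averaging window $[r,2r]$ lies to the right of $r$ and $G^{+}$ is nondecreasing; that $\psi(r)\to\ell$ as $r\downarrow 0$ by dominated convergence; and $\psi$ is continuous on $(0,1]$ since $\int_r^{2r}G^{+}=F(2r)-F(r)$ with $F$ a Lipschitz primitive of the bounded function $G^{+}$. Then $h(r):=\psi(r)+r$ on $[0,1]$, together with $h(0):=\ell$, is continuous and strictly increasing on $[0,1]$ and satisfies $h\geq G$; extending it affinely outside $[0,1]$ gives $h\in\fset$. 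Substituting back, $\nu_k\leq g(k)=G(e^{-k})\leq h(e^{-k})=h((e^{-1})^k)$, so $(h,e^{-1})\in\funset(\nu)$.

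For the "moreover" clause I would make $h(0)$ strictly below some term of $\nu$. When $\ell$ is finite the construction already gives $h(0)=\ell$, and the usefulness hypothesis supplies $k^{\ast}\in\gsls{\nu}$ with $\nu_{k^{\ast}}>\inf_{t\geq 0}\gamma(\osp{\theta},t)=\ell=h(0)$, i.e. $k^{\ast}\in\res(\nu,h)$, so $h$ is useful for $\nu$. When $\ell=-\infty$ I would first replace $g$ by $\hat g:=\max\{g,\nu_{k^{\ast}}-1\}$ for a $k^{\ast}\in\gsls{\nu}$ furnished by the usefulness hypothesis; $\hat g$ is still decreasing, still dominates $\nu$ at the integers, and has finite infimum $\nu_{k^{\ast}}-1$, so the construction applies to $\hat g$ and yields $h\in\fset$ with $h(0)=\nu_{k^{\ast}}-1<\nu_{k^{\ast}}$, again placing $k^{\ast}$ in $\res(\nu,h)$. (If the usefulness of the $\klgen$-$\fin$ bound is not assumed and $\ell=-\infty$, one may simply take $h(r)=g(0)+r$.)

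The main obstacle is the core construction of the second paragraph: turning the merely decreasing, possibly wildly discontinuous $g$ into a strictly increasing continuous $h$ with the prescribed value $h(0)=\ell$ while preserving domination after the change of variables. The right-averaging $\psi(r)=\tfrac1r\int_r^{2r}G^{+}$ is exactly the device that reconciles the three requirements — being a majorant, being continuous, and having limit $\ell$ at $0$ — and the constraint $h\in\fset$, which forces $h$ to be finite-valued, is what makes the separate truncation-based treatment of the case $\ell=-\infty$ necessary.
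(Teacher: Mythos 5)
Your proof is correct, and its skeleton is the same as the paper's: collapse the state variable via the monotonicity of $s\mapsto\gamma(s,t)$ to get a single decreasing real function $t\mapsto\gamma(\osp{\theta},t)$ dominating $\nu_k$, then prove a $\klgen$ version of Sontag's lemma that turns this decreasing function into a continuous strictly monotone majorant composed with $e^{-t}$, and finally tune $h(0)$ to secure usefulness. Where you genuinely diverge is in the regularization device inside the Sontag-type lemma. The paper (Lemma~\ref{dectoregular}) works directly in the time variable: it builds an explicit piecewise-affine, decreasing, continuous majorant of $t\mapsto\gamma(s,t)$ by holding the value $f(n-1)$ on $[n,n+\tfrac12]$ and interpolating affinely to $f(n)$ on $[n+\tfrac12,n+1]$, adds $(x+1)^{-1}$ to force strict decrease, and takes a max with $(x+1)^{-1}+m$ to prescribe the infimum $m$; only afterwards does it substitute $r=e^{-t}$. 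You substitute first and then smooth the resulting bounded nondecreasing $G(r)=g(-\ln r)$ by the right moving average $\psi(r)=\tfrac1r\int_r^{2r}G^{+}$, which simultaneously delivers domination (the window sits to the right of $r$), continuity (Lipschitz primitive of a bounded function), and the correct limit $\ell$ at $0$; adding $r$ gives strict monotonicity. Both constructions are valid. The paper's buys slightly more flexibility: its lemma prescribes \emph{any} $m\geq\inf$ as the value of $h(0)$, which handles the $\inf=-\infty$ case and the usefulness adjustment in one stroke (choose $m$ strictly between $\inf_{t}\gamma(\osp{\theta},t)$ and $\nu_k$), whereas your averaging pins $h(0)$ to the exact infimum and therefore needs the separate truncation $\hat g=\max\{g,\nu_{k^{\ast}}-1\}$ when that infimum is $-\infty$ --- which you correctly supply. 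Your integral-averaging argument is arguably cleaner to verify than the piecewise-affine bookkeeping, at the cost of invoking dominated convergence (or a squeeze between $\ell$ and $G^{+}(2r)$) where the paper only uses elementary monotonicity.
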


\begin{lemma}
\label{dectoregular}
Let $f:\rr_+\to \rr$ be a decreasing function and let $m\geq \inf_{\rr_+} f$ ($m$ can be equal to $-\infty$ if $\inf_{\rr_+} f$ is). There exists a function $g:\rr_+\mapsto \rr$ that is strictly decreasing and continuous such that:
\begin{enumerate}
\item $f\leq g$;
\item $\inf_{\rr_+} g=m$.
\end{enumerate} 
\end{lemma}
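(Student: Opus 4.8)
The plan is to build $g$ in two stages: first dominate $f$ by a convenient decreasing (but possibly non-strict, possibly discontinuous) step-like function whose infimum is still controlled, then smooth it into a strictly decreasing continuous function. A clean way to start is to separate the two things we want from $g$: strict monotonicity and continuity on the one hand, and the prescribed infimum $m$ on the other. Write $\ell:=\inf_{\rr_+} f \in \rr\cup\{-\infty\}$, so $m\ge \ell$ by hypothesis.

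First I would handle the domination. Since $f$ is decreasing, on each interval $[n,n+1]$ with $n\in\nn$ it is bounded above by $f(n)$ (its value at the left endpoint), and the sequence $(f(n))_{n\in\nn}$ is itself decreasing with $\inf_n f(n)=\ell$. So I define an auxiliary function that on $[n,n+1)$ takes a value slightly above $f(n)$ and is built to be strictly decreasing in $n$: for instance set $c_n:=f(0)+1-\sum_{j=0}^{n}2^{-j}$ if $\ell=-\infty$... but that does not converge to $\ell$ in general. The robust choice is to interpolate directly between the values $f(n)$: on $[n,n+1]$ let $g$ be the affine function joining the point $(n, f(n)+a_n)$ to $(n+1, f(n+1)+a_{n+1})$, where $(a_n)_{n\in\nn}$ is a strictly positive, strictly decreasing sequence tending to $0$ that is chosen small enough that the resulting piecewise-affine function is strictly decreasing; since $f(n)\ge f(n+1)$, it suffices that $a_n-a_{n+1}> -(f(n)-f(n+1))$, which is automatic because the left side is positive and the right side is $\le 0$. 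This $g$ is continuous and piecewise affine, and on $[n,n+1]$ we have $g\ge \min\{f(n)+a_n, f(n+1)+a_{n+1}\}\ge f(n+1)\ge f$ on that interval, except possibly near $n$; to be safe one takes $g$ on $[n,n+1]$ to dominate $f(n)$ throughout, e.g. by using $f(n)+a_n$ as the value at the right endpoint as well when $f(n)=f(n+1)$, or more simply by defining $g$ on $[n,n+1]$ to interpolate between $f(n)+a_n$ and $f(n+1)+a_{n+1}$ and noting $f\le f(n)\le f(n)+a_n = $ the value at the left endpoint $<$ nothing forces it below — so instead let $g$ on $[n,n+1]$ be the larger of the affine interpolant and the constant $f(n)$ shifted: the cleanest is $g|_{[n,n+1]}$ affine from $f(n)+a_n$ down to $f(n+1)+a_{n+1}$, which already dominates $f$ on $[n,n+1]$ since $f\le f(n)< f(n)+a_n$ and — no. The genuinely clean statement is: on $[n,n+1]$, $f\le f(n)$, and the affine interpolant has minimum value $f(n+1)+a_{n+1}$ at the right end, which need not exceed $f(n)$. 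So I would instead shift the nodes: put $g(n+1):=f(n)+a_{n+1}$ (value at the right endpoint of $[n,n+1]$ uses $f(n)$, not $f(n+1)$), interpolate affinely on $[n,n+1]$ between $f(n-1)+a_n$ and $f(n)+a_{n+1}$ for $n\ge 1$ (and handle $[0,1]$ directly with $f(0)$). Then on $[n,n+1]$ the affine interpolant has minimum $f(n)+a_{n+1}> f(n)\ge f$, giving $f< g$ there, and as $n\to\infty$ the nodal values $f(n-1)+a_n\to\ell$, so $\inf_{\rr_+}g=\ell$.

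The second stage is to adjust the infimum from $\ell$ up to $m$, and this is the step I expect to require the only real care. If $m=\ell$ there is nothing to do. If $m>\ell$ (in particular if $\ell=-\infty$ and $m$ is finite, or if both are finite with $m>\ell$), I replace $g$ by $\tilde g:=\max\{g,\, m + e^{-x}\}$: the function $x\mapsto m+e^{-x}$ is strictly decreasing, continuous, with infimum $m$ and supremum $m+1$; the max of two continuous functions is continuous; and the max of two strictly decreasing functions is strictly decreasing. Since $m\ge \ell = \inf g$, we still have $\tilde g\ge g\ge f$ — wait, we need $f\le \tilde g$, which holds because $\tilde g\ge g> f$. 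And $\inf_{\rr_+}\tilde g = \max\{\inf g,\, \inf(m+e^{-x})\} $ is not literally true for an infimum of a max, so here is where care is needed: $\inf_{\rr_+}\tilde g \ge m$ is clear since $\tilde g\ge m+e^{-x}> m$; and $\inf_{\rr_+}\tilde g\le m$ because for large $x$, $g(x)$ is close to $\ell\le m< m+e^{-x}$, so eventually $\tilde g(x)=m+e^{-x}\to m$. Hence $\inf_{\rr_+}\tilde g=m$, and $\tilde g$ is strictly decreasing, continuous, and dominates $f$, as required. The main obstacle is thus purely bookkeeping: getting the node placement in stage one right so that the piecewise-affine majorant genuinely lies above $f$ everywhere while its nodal values still descend to $\inf f$; once that is pinned down, stage two is a one-line $\max$ with an exponential.
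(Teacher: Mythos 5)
Your proof is correct and takes essentially the same route as the paper's: both build a continuous decreasing piecewise-affine majorant from the sampled values $f(n)$ shifted to the right by one unit, force strict monotonicity with a small strictly decreasing additive perturbation (your nodal shifts $a_n$ versus the paper's global $(x+1)^{-1}$), and then raise the infimum to $m$ by taking a pointwise maximum with a strictly decreasing continuous function tending to $m$ (your $m+e^{-x}$ versus the paper's $m+(x+1)^{-1}$). The only cosmetic difference is that the paper uses a plateau-then-ramp interpolant on each $[n,n+1]$ and treats the case $m\geq f(0)$ separately, which your construction handles uniformly.
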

\begin{proof}
If $m\geq f(0)$, the function $x\mapsto g(x)=m+1/(x+1)$ satisfies the above properties. Note that this case includes the case where $f$ is constant on $\rr_+$. 

Now assume that $f(0)>m\geq \inf_{\rr_+} f$. We denote for all couple of pairs $\{(x,y),(z,t)\}$ where $x,y,z,t \in\rr$ by $\aff\{(x,y),(z,t)\}$ the unique affine function $\ell:\rr\mapsto \rr$ such that $\ell(x)=y$ and $\ell(z)=t$. We define the functions $F_0$ and $F_n$ for all $n\in\nn^*$ as follows:
\[
F_0=f(0)\mm_{[0,1]}
\text{ and }
F_n=
f(n-1)\mm_{\left[n;n+\frac{1}{2}\right]}+\aff\left\{\left(n+\frac{1}{2};f(n-1)\right);\left(n+1;f(n)\right)\right\}\mm_{\left[n+\frac{1}{2};n+1\right]}
\]
Finally, we define, for all $x\in\rr_+$, the function $F$:
\[
F(x)=\sum_{n\in\nn} F_n(x) + \dfrac{1}{x+1} \text{ and note that} \sum_{n\in\nn} F_n(x)=\left\{
\begin{array}{lr}
f(0) & \text{ if } x \in [0,1]\\
f(\lfloor x\rfloor -1) & \text{ if } x\in [\lfloor x\rfloor ,\lfloor x\rfloor+1/2]\\
\ell\left(x\right) & \text{ if } x\in (\lfloor x\rfloor +1/2,\lfloor x\rfloor+1)
\end{array}
\right.
\]
where $\ell=\aff\left\{\left(\lfloor x\rfloor+1/2;f(\lfloor x\rfloor-1)\right);\left(\lfloor x\rfloor+1;f(\lfloor x\rfloor)\right)\right\}$.

The function $\sum_{n\in\nn} F_n$ is clearly continuous and decreasing (as $f$ does) on $\rr_+$. The function $\sum_{n\in\nn} F_n$ is not strictly decreasing, but adding $x\mapsto (x+1)^{-1}$ makes $F$ strictly decreasing and continuous. We also have $f\leq \sum_{n\in\nn} F_n$ because $f$ is decreasing. As $x\mapsto (x+1)^{-1}$ is strictly positive, we have $f\leq F$. 
Finally, we have $\inf_{\rr_+} F=\inf_{\rr_+} f$. Indeed, for all $n\in\nn^*$ and all $x\in [n,n+1)$, $F_n\leq f(n-1)$, it implies that $\inf_{\rr_+} F= \inf_{x\in \rr_+}\inf_{n\in\nn^*} F_n(x)\leq \inf_{n\in\nn^*} f(n-1)=\inf_{\rr_+} f$ (as $f$ decreases). The reverse inequality holds as $f\leq F$. 

The function $F$ does not necessarily satisfy \blue{$\inf_{\rr_+} F=m$}. Hence, we define for all $x\in\rr_+$ as 
\[
g(x)=\left\{
\begin{array}{lr}
\max\{F(x),(x+1)^{-1}+m\} & \text{ if } m>\inf_{\rr_+} f\\
F(x) & \text{ if } m=\inf_{\rr_+} f
\end{array}
\right.
\]
Suppose that $m>\inf_{\rr_+} f$. The function $g$ is clearly strictly decreasing and continuous and satisfies $f\leq g$ as $F$ does. As $m>\inf_{\rr_+} f=\inf_{\rr_+} F$, there exists $x_0\red{\in\rr_+}$ such that $F(x)<m$ for all $x\geq x_0$ and then $g(x)= (x+1)^{-1}+m$ for all $x\geq x_0$. This implies that $\inf_{\rr_+} g=m$. 
\end{proof}

\begin{lemma}[Sontag's extension]
\label{alasontag}
Let $\gamma\in\klgen$. Let $s\in\rr$ and let $m\geq \inf_{t\geq 0} \gamma(s,t)$ be finite. Then there exists $h\in\Omega([0,1])$ such that for all $t\geq 0$, $\gamma(s,t)\leq h(e^{-t})$ and $h(0)=m$.
\end{lemma}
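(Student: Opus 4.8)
The idea is to reduce the two-variable statement to the one-variable Lemma~\ref{dectoregular} by freezing the first argument. Fix $s\in\rr$ and set $f:\rr_+\to\rr$, $f(t):=\gamma(s,t)$. Since $\gamma\in\klgen$, the map $t\mapsto\gamma(s,t)$ is increasing; however Lemma~\ref{dectoregular} is stated for a \emph{decreasing} function, so I would first pass to $\tilde f(t):=f(-\,\cdot\,)$ won't work on $\rr_+$, and instead I would work with $-f$ or, more cleanly, exploit the change of variable $t\mapsto e^{-t}$ directly. Concretely: the composition $t\mapsto \gamma(s,t)$ being increasing in $t$ means that, written in the variable $r=e^{-t}\in(0,1]$, the function $r\mapsto \gamma(s,-\ln r)$ is increasing in $r$. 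So I would define $\phi:(0,1]\to\rr$ by $\phi(r):=\gamma(s,-\ln r)$; it is increasing, and $\inf_{t\ge 0}\gamma(s,t)=\lim_{r\to 0^+}\phi(r)=\inf_{r\in(0,1]}\phi(r)$.

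Next I apply Lemma~\ref{dectoregular} in a mirrored form: the function $u\mapsto \phi(e^{-u})=\gamma(s,u)$... this is circular, so the cleanest route is to apply Lemma~\ref{dectoregular} to $f(t)=\gamma(s,t)$ \emph{after negating}. Let $F(t):=-\gamma(s,t)$, which is decreasing on $\rr_+$, and let $M:=-m$; note $M\le -\inf_{t\ge0}\gamma(s,t)=\sup_{t\ge0}(-\gamma(s,t))$, i.e. $M\ge\inf_{\rr_+}F$ fails in general — so this negation trick needs $m$ on the correct side. I therefore expect the right move is: apply Lemma~\ref{dectoregular} to obtain a strictly decreasing continuous $g:\rr_+\to\rr$ with $\gamma(s,\cdot)\le g$ and $\inf_{\rr_+}g=m$ only after noticing that $\gamma(s,\cdot)$ increasing forces $\inf_{t\ge0}\gamma(s,t)=\gamma(s,0)$, so $m\ge\gamma(s,0)\ge\gamma(s,t)$ never holds for $t>0$ — meaning the hypothesis $m\ge\inf_{t\ge0}\gamma(s,t)=\gamma(s,0)$ actually says $m\ge\gamma(s,0)$, and then the constant-dominating branch of Lemma~\ref{dectoregular}'s proof ($g(x)=m+1/(x+1)$) already does the job directly, with no negation needed. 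So the bulk of the work reduces to the trivial case.

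Having produced a strictly decreasing continuous $g:\rr_+\to\rr$ with $\gamma(s,t)\le g(t)$ for all $t\ge0$ and $\inf_{\rr_+}g=m$ — here I would take $g(t)=m+1/(t+1)$ when $m\ge\gamma(s,0)$, which suffices since $\gamma(s,t)\le\gamma(s,0)\le m< m+1/(t+1)$ — I then define $h$ on $[0,1]$ by the substitution $r=e^{-t}$, i.e. $h(r):=g(-\ln r)$ for $r\in(0,1]$ and $h(0):=\lim_{r\to0^+}h(r)=\inf_{\rr_+}g=m$. Because $t\mapsto e^{-t}$ is a strictly decreasing continuous bijection from $[0,+\infty)$ onto $(0,1]$ and $g$ is strictly decreasing and continuous, $h$ is strictly increasing and continuous on $(0,1]$; the value $h(0)=m$ makes it continuous at $0$ as well (by the definition of the infimum as the limit), and it stays strictly increasing across $0$ since $h(r)=g(-\ln r)>m=h(0)$ for every $r>0$. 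Extending $h$ arbitrarily (e.g. affinely) outside $[0,1]$ to a function $\rr\to\rr$ gives $h\in\Omega([0,1])=\fset$. Finally, for every $t\ge0$, $\gamma(s,t)\le g(t)=g(-\ln(e^{-t}))=h(e^{-t})$, and $h(0)=m$, which is exactly the claim.

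The only genuine subtlety — and the step I would be most careful about — is checking that the hypothesis ``$m\ge\inf_{t\ge0}\gamma(s,t)$'' is being used correctly given that $\gamma(s,\cdot)$ is \emph{increasing} (so its infimum over $t\ge0$ is $\gamma(s,0)$, attained at $t=0$), which is what collapses the problem onto the easy branch of Lemma~\ref{dectoregular}; one must not mistakenly invoke the nontrivial branch built for genuinely decreasing $f$. Everything else — continuity at the endpoint $r=0$, strict monotonicity through $0$, the bookkeeping of extending $h$ off $[0,1]$ — is routine.
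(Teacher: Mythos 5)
Your final change-of-variables step (defining $h(r)=g(-\ln r)$ on $(0,1]$, setting $h(0)=m$, checking continuity at $0$ via $\inf_{\rr_+}g=m$, and extending off $[0,1]$) is exactly the paper's argument and is fine. The gap is in the core of the proof, where you produce the dominating function $g$. You have been misled by the statement of the definition of $\klgen$, which (as written) swaps ``increasing'' and ``decreasing''; everywhere the paper actually uses the class --- the abstract, the proof of Th.~\ref{htoklbounds}, and the proof of this very lemma --- the intended property is that $t\mapsto\gamma(s,t)$ is \emph{decreasing}, and the lemma is only true under that reading (if $\gamma(s,\cdot)$ were increasing and unbounded, no continuous $h$ on $[0,1]$ could satisfy $\gamma(s,t)\leq h(e^{-t})$, since $h((0,1])$ is bounded). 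Your proposal is internally inconsistent on precisely this point: you first argue that $\gamma(s,\cdot)$ increasing forces $\inf_{t\geq 0}\gamma(s,t)=\gamma(s,0)$, and then justify the domination $\gamma(s,t)\leq m+1/(t+1)$ by writing $\gamma(s,t)\leq\gamma(s,0)\leq m$, which uses the opposite monotonicity.

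The consequence is that your claim that ``the bulk of the work reduces to the trivial case'' is wrong under either reading. Under the correct (decreasing) reading, $\inf_{t\geq 0}\gamma(s,t)=\lim_{t\to+\infty}\gamma(s,t)$, which can be far below $\gamma(s,0)$; the hypothesis $m\geq\inf_{t\geq 0}\gamma(s,t)$ therefore does \emph{not} give $m\geq\gamma(s,0)$, and the constant-dominating branch $g(t)=m+1/(t+1)$ fails near $t=0$ whenever $\gamma(s,0)>m+1$. This is exactly the situation the nontrivial (piecewise-affine staircase) branch of Lemma~\ref{dectoregular} was built for, and it is the branch the paper's proof invokes: one applies Lemma~\ref{dectoregular} in full to $f=\gamma(s,\cdot)$ to obtain a strictly decreasing continuous $\sigma\geq f$ with $\inf_{\rr_+}\sigma=m$, and only then performs your change of variables. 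So the fix is simple --- drop the reduction to the ``easy branch'' and cite Lemma~\ref{dectoregular} as stated --- but as written the proposal does not establish the domination $\gamma(s,t)\leq g(t)$ for all $t\geq 0$.
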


\begin{proof}
By definition, $t\mapsto \gamma(s,t)$ is decreasing and applying Lemma~\ref{dectoregular}, there exists a function $\sigma:\rr_+\mapsto \rr$ strictly decreasing and continuous such that $\gamma(s,t)\leq \sigma(t)$ for all $t\geq 0$ and $\inf_{\rr_+} \sigma=m$. Let us define for all $r\in (0,1]$, $h(r)=\sigma(-\ln r)$ and $\lim_{x\to 0^+}h(x)=\lim_{y\to +\infty}\sigma(y)=m$. Then, defining $h(0)=m$ makes $h$ continuous at 0. Finally, $h$ belongs to $\Omega([0,1])$.       
\end{proof}

\begin{proof}[Proof of Th.~\ref{KLversh}]
By assumption, there exists a $\klgen$-$\fin$ upper bound $(\gamma,\theta)$ for $(\xin,T,\varphi)$. Let $x\in\xin$ and $k\in\nn$. This implies that $\varphi(T^k(x))\leq \gamma(\osp{\theta},k)$. By Lemma~\ref{alasontag}, there exists $h\in\Omega([0,1])$ such that $\gamma(\osp{\theta},k)\leq h(e^{-k})$ for all $k\in\nn$. This leads to $(h,e^{-1})\in\funset(\nu)$. If there exists $k\in\nn$ such that $\inf_{t\in\rr}\gamma(\osp{\theta},t)<\nu_k$, we can choose, as it is stated in Lemma~\ref{alasontag}, $h$ such that $h(0)=m\in (\inf_{t\in\rr}\gamma(\red{\osp{\theta}},t),\nu_k)$. 
\end{proof}

\begin{example}[An element of $\funset(\omeles)$ from $(\gamma_{\epsles},\theles)$]
From Th~\ref{KLversh}, we can build an element of $\funset(\omeles)$ from the $\klgen$-$\fini{\xles}$ upper bound obtained in Example~\ref{ex:klgenfinleslie}. The situation is simpler in Example~\ref{ex:klgenfinleslie} as we get directly a scalar in $(0,1)$ from the expression of $\gamma_{\epsles}$.
 
Let 
\[
h:[0,1]\ni x\mapsto \left\{\begin{array}{lr}\displaystyle{\gamma_{\epsles}\left(\osps{\theles}{\xles},\dfrac{\ln(x)}{\ln(\beta)}\right)}&\text{ if } x\in(0,1]\\
	\dfrac{\lambda_2}{c} & x=0
	\end{array}\right.
	\text{ and } \beta=\left|\dfrac{\lambda_1}{\lambda_2}\right|
\]	
where $\gamma_\varepsilon$ is defined in~\eqref{eq:klgenepsdef}, $\epsles$ and $\theles$ are defined in Example~\ref{ex:klgenfinleslie}. As $t$ tends to $+\infty$, we have  $\osps{\theles}{\xles}<-|\lambda_1/\lambda_2|^t-\epsles$ for all sufficiently large $t$. It follows that $\lim_{x\mapsto 0^+} h(x)=\lim_{z\mapsto 0^+} H(\osps{\theles}{\xles},z)=\lambda_2/c$. Thus, the function $h$ is continuous at 0. From~\eqref{eq:finalklgenfin}, we can deduce that $(h,\beta)\in\funset(\omeles)$. 
Furthermore, we have, $h([0,1])=[\lambda_2/c,\gamma_{\epsles}(\osps{\theles}{\xles},0)]$ that contains all terms $\omeles_k$ greater than the limit superior of $\omeles$ and 
\[
[\lambda_2/c,\gamma_{\epsles}(\osps{\theles}{\xles},0)]\ni y\mapsto \inv{h}{[0,1]}(y)=\left\{
\begin{array}{lr}
\dfrac{\osps{\theles}{\xles}(\lambda_2-cy)}{cy-\lambda_1} & \text{ if } y\leq \dfrac{\epsles\lambda_1-\osps{\theles}{\xles}(\lambda_2-\lambda_1)}{c\epsles}\\
 \dfrac{\epsles(cy-c\osps{\theles}{\xles}-c\epsles-\lambda_2)}{c\epsles+\lambda_2-\lambda_1}&\text{ if }y>\dfrac{\epsles\lambda_1-\osps{\theles}{\xles}(\lambda_2-\lambda_1)}{c\epsles}
\end{array}
\right..
\]
Then we can use Formula~\eqref{eq:fnu} that becomes, if $\omeles_k>\lambda_2/c$:
\[
\dfrac{\ln(\inv{h}{[0,1]}(\omeles_k))}{\ln\left(\left|\dfrac{\lambda_1}{\lambda_2}\right|\right)}.
\]

Applying those computations to the numerical data of Subsection~\ref{subsec:numericaldata}, we get, as $\osps{\theles}{\xles}=-11/19$
\[
\left[4,\dfrac{11891}{76}\right]\ni y\mapsto \inv{h}{[0,1]}(y)=\left\{
\begin{array}{lr}
	-\dfrac{11}{19}\dfrac{(2-0.5y)}{0.5y+1} & \text{ if } y\leq 86\\
	\\
	\dfrac{y}{153}-\dfrac{263}{11628}&\text{ if }y>86
\end{array}
\right..
\]
Recall that $\omeles_0=\philes(\xul)=40/5=8$ and then
\[
\dfrac{\ln(\inv{h}{[0,1]}(8))}{-\ln(2)}=\dfrac{\ln(22/57)}{-\ln(2)}\approx 1.3735
\]
This means that the maximum of terms $(\omeles_k)_k$ is either $\omeles_0$ or $\omeles_1$. Now as $\omeles_1=\philes(L\xlu)=42$. This implies that $\omelesopt=42$. Moreover, we find that 
\[
\dfrac{\ln(\inv{h}{[0,1]}(42))}{-\ln(2)}=\dfrac{\ln\left(\dfrac{11\times 19}{19\times 22}\right)}{-\ln(2)}=1.
\]
\qed
\end{example}

\subsection{Summary of the \protect$\klgen$ extension}

We extend the class of $\klcls$ functions that classically serve for dynamical systems stability. In our context, our $\klgen$ class constitutes the main part of $\klgen$-$\fin$ upper bound of $\varphi(T^k(x))$. We prove that such upper bounds exist if and only if $\nu$ has a finite supremum. Moreover, if $\gsls{\nu}\neq \emptyset$, we can construct a function $h\in\secfunh(\nu)$ useful for $\nu$ such that $e^{-1}\in\secfunb(\nu,h)$.

Unfortunately, in practice, proving that a pair $(\gamma,\theta)$ is a $\klgen$-$\fin$ upper bound for $(\xin,T,\varphi)$ seems to be as difficult as proving that a pair $(h,\beta)$ belongs to $\funset(\nu)$. Thus, we propose a new type of Lyapunov function for which the conditions do not involve the entire trajectory of the analyzed dynamical system.

\section{Lyapunov type Functions for Discrete-time Systems Maximization}
\label{newlyapunov}

Classically, Lyapunov functions (i.e., satisfying conditions~\eqref{eq:positivitylyap} and~\eqref{eq:decreaselyap}) are used as certificates of stability (see ~\cite{elaydiintroduction} or~\cite{kelley2001difference}). They present a constructive approach to prove the stability of a dynamical system through converse Lyapunov theorems, which prove that stability notions are equivalent to the existence of Lyapunov functions~\cite{khalil2002nonlinear}. For some linear, polynomial, or piecewise polynomial systems, Lyapunov functions can be computed from numerical optimization solvers based on linear or semidefinite programming (for example, see~\cite{giesl2015review} and references therein). \red{This numerical opportunity has been exploited for specific peak computation problems. Indeed, in~\cite{adje13052025} quadratic Lyapunov functions were used to solve our problem when the dynamical system is stable and affine and the objective function is quadratic. Next, in~\cite{adje2025kllyap}, we have proved 
that more general (local) Lyapunov functions can be used to solve peak computation problems. In these previous works, we applied direct-like Lyapunov theorems. In Lyapunov stability theory, after obtaining direct theorems, we consider converse theorems. The question raised in this case is, which stability notions imply the existence of a Lyapunov function? In our context, the converse theorem relies on the following question: which properties of classical Lyapunov functions can be removed to obtain an equivalence between the existence of a useful pair $(h,\beta)\in\funset(\nu)$ and the existence of a weaker version of the Lyapunov function? In this section, we reply to this answer.} First, in Subsection~\ref{generallyapunov}, we define a more general class of Lyapunov functions \red{called {\bf Opt-Lyapunov} functions}. The existence of these functions does not ensure the stability of the system, but only the finiteness of the limit superior of the sequence $\nu$ defined in Eq. ~\eqref{nudef}. Then, in Subsection~\ref{compatibility}, we discuss the notion of usefulness and how it constrains the Opt-Lyapunov functions. In the classical case, usefulness constrains the sequence $\nu$ to have some positive terms. We replace the functions $\alpha_1,\alpha_\varphi$ in \eqref{eq:hfromextlyap} by a \emph{certificate of compatibility}. Finally, in Subsection~\ref{directconverselyap}, we establish an equivalence theorem between the existence of those Opt-Lyapunov functions and the existence of an element $(h,\beta)\in\funset(\nu)$. 

\subsection{Generalized Lyapunov Functions for Discrete-time Systems Maximization}
\label{generallyapunov}

\subsubsection{Definitions and Useful Facts}
We recall that the \textit{domain} of function $g:\rd\mapsto (-\infty,+\infty]$ is the set of $x\in\rd$ such that  $f(x)<+\infty$. We denote by $\dom(f)$ the domain of a extended real valued function.

We extend the class of classical Lyapunov functions as follows: An Opt-Lyapunov function is still nonnegative but can be either strictly positive or can have several zeros and can take infinite values. The supremum of an Opt-Lyapunov function on the initial conditions set is non-null but smaller than 1. Opt-Lyapunov functions keep the decrease condition of classical Lyapunov functions.   
\begin{defi}[Opt-Lyapunov functions]
\label{lyapdef}
A function $V$ is said to be an Opt-Lyapunov function for $(\xin,T)$ if and only if
\begin{enumerate}
\item $V:\rd\mapsto [0,+\infty]$;
\item $\osp{V}\in (0,1]$;
\item $V\circ T\leq \lambda V$ for some $\lambda\in (0,1)$.
\end{enumerate}
\end{defi}
\begin{example}[Opt-Lyapunov function for the Leslie Model]
	\label{ex:opt-lyap-leslie}
	Returning to the Leslie model example introduced in Section~\ref{sec:leslieexample}. As $\lambda_2>1$, the system is unstable, that is, $\norm{x_k}$ goes to $+\infty$ as $k$ tends to $+\infty$. A classical Lyapunov function does not exist for this system. However, an Opt-Lyapunov function exists. We propose, for $C>0$,
	\begin{equation}
		\label{eq:opt-lyap-leslie}
	\rr^2\ni x\mapsto \lyaples(x)=\left\{\begin{array}{cr}\dfrac{C}{x_1^2+x_2^2} & \text{ if } x_1,x_2>0\\
		+\infty & \text{ otherwise}
	\end{array}\right..
	\end{equation}
	Let us take $x\in\rr^2$. If $Lx$ has a nonpositive coordinate (and thus $\lyaples(Lx)=+\infty$), then either $x_1\leq 0$ or $x_2\leq 0$ yielding to $\lyaples(x)=+\infty$. Therefore, $\lyaples(Lx)\leq \lambda \lyaples(x)$ for any $\lambda\in (0,1)$. Now, assume that both coordinates of $Lx$ are strictly positive. This implies that $x_1>0$. We can have $x_2\leq 0$. In this case, $\lyaples(x)=+\infty$ and $\lyaples(Lx)\leq \lambda \lyaples(x)$ for any $\lambda\in (0,1)$. Finally, suppose that $Lx$ and $x$ have strictly positive coordinates. We have from ~\eqref{eq:abdef}
	\[
	\dfrac{\lyaples(Lx)}{\lyaples(x)}=\dfrac{x_1^2+x_2^2}{(ax_1+bx_2)^2+c x_1^2}=\dfrac{x_1^2+x_2^2}{(a^2+c^2)x_1^2+b^2x_2^2+2ab x_1 x_2}<\dfrac{x_1^2+x_2^2}{(a^2+c^2)x_1^2+b^2x_2^2}<1.
	\]
	 This inequality is not sufficient to prove condition (3) of Definition~\ref{lyapdef} as the supremum of $\lyaples(Lx)/\lyaples(x)$ might be equal to 1. We will compute the exact value of the supremum of $\lyaples(Lx)/\lyaples(x)$ over the open positive orthant of $\rr^2$. This supremum is strictly smaller than 1.
	 
	 We can adjust the constant $C$ to have $\osples\in (0,1]$ even if we will see that the finiteness of $\osples$ is sufficient to obtain the second statement of Definition~\ref{lyapdef}. As $\lyaples$ is marginally strictly decreasing we have $\osples=C/(l_1^2+l_2^2)$. Thus any $C\in (0,l_1^2+l_2^2]$ can be chosen.

	Applying those computations to the numerical data of Subsection~\ref{subsec:numericaldata}, we get for all $x\in(\rr_+^*)^2$, \[\lyaples(x)=125/x_1^2+x_2^2\] where we choose $C=l_1^2+l_2^2$ to obtain $\osps{\lyaples}{\xinles}=1$.
	 \qed
\end{example}
\begin{proposition}
\label{fixlyap}
If $V$ is an Opt-Lyapunov function for $(\xin,T)$ then 
\begin{enumerate}
\item  for all $k\in\nn$, $T^k(\xin)\subseteq V^{-1}([0,1])\subseteq \dom(V)$;
\item for all fixed points $x$ of $T$, $V(x)\in \{0,+\infty\}$.
\end{enumerate}
\end{proposition}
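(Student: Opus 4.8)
The plan is to read off both statements directly from the two defining conditions of an Opt-Lyapunov function: the normalisation $\osp{V}\in(0,1]$ on the initial set and the decrease inequality $V\circ T\leq\lambda V$ with $\lambda\in(0,1)$.

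For the first statement, I would start from the observation that $\osp{V}=\sup\{V(x):x\in\xin\}\leq 1$ forces $V(x)\leq 1$ for every $x\in\xin$; in particular all these values are finite, so $\xin=T^0(\xin)\subseteq V^{-1}([0,1])$. Then I would prove by induction on $k$ that $V(T^k(x))\leq\lambda^k V(x)$ for all $x\in\xin$ and all $k\in\nn$. The case $k=0$ is the previous remark; for the inductive step, the value $V(T^{k-1}(x))$ is finite (bounded by $\lambda^{k-1}\leq 1$), so applying $V\circ T\leq\lambda V$ to the point $T^{k-1}(x)$ gives $V(T^k(x))\leq\lambda V(T^{k-1}(x))\leq\lambda^k V(x)$. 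Since $\lambda\in(0,1)$ and $V(x)\leq 1$, we get $V(T^k(x))\leq\lambda^k\leq 1$, i.e. $T^k(x)\in V^{-1}([0,1])$, which proves $T^k(\xin)\subseteq V^{-1}([0,1])$. The inclusion $V^{-1}([0,1])\subseteq\dom(V)$ is immediate, since any point whose $V$-value lies in $[0,1]$ has finite value.

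For the second statement, I would take a fixed point $x$ of $T$, so $T(x)=x$, and apply the decrease condition to obtain $V(x)=V(T(x))\leq\lambda V(x)$. If $V(x)$ were both finite and strictly positive, dividing by $V(x)$ would give $1\leq\lambda$, contradicting $\lambda<1$; hence $V(x)=0$ or $V(x)=+\infty$, i.e. $V(x)\in\{0,+\infty\}$.

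I do not anticipate a real obstacle here; the only point deserving a line of care is the iteration $V(T^k(x))\leq\lambda^k V(x)$ in the extended reals $[0,+\infty]$, which is why I use explicitly the finiteness of the intermediate iterates guaranteed by $\osp{V}\leq 1$ rather than relying on it implicitly.
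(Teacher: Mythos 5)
Your proof is correct and follows essentially the same route as the paper: part 1 combines the normalisation $\osp{V}\leq 1$ with the iterated decrease inequality $V(T^k(x))\leq \lambda^k V(x)$, and part 2 derives $V(x)\leq\lambda V(x)$ at a fixed point and rules out finite positive values. The extra care you take with the induction in the extended reals is a harmless refinement of the paper's more compressed argument.
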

\begin{proof}
{\itshape 1}. From the second statement for all $x\in\xin$, $V(x) \in [0,1]$. From the third statement and the fact that $V$ is nonnegative, we have, for all $x\in\xin$, for all $k\in\nn$, $0\leq V(T^k(x))\leq V(x)\leq 1$. 

{\itshape 2}. Let $x$ be a fixed point of $T$. If $V(x)=+\infty$, there is nothing to prove. Suppose that $V(x)$ is finite. From the third statement, $0\leq V(T(x))=V(x)\leq \lambda V(x)$ for some $\lambda\in (0,1)$. Hence, $V(x)(1-\lambda)= 0$ and we conclude that $V(x)=0$.   
\end{proof}

\begin{corollary}
\label{coronofix}
If $\xin$ is included in the fixed point set of $T$ then a function $V:\rd\mapsto [0,+\infty]$ such that $V\circ T\leq \lambda V$ for some $\lambda\in (0,1)$ cannot satisfy $\osp{V}>0$.
\end{corollary}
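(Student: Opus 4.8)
The plan is to prove Corollary~\ref{coronofix} directly by combining the second point of Proposition~\ref{fixlyap} with the definition of $\osp{V}$. The statement to establish is that when $\xin$ is contained in the fixed point set of $T$, any function $V:\rd\mapsto [0,+\infty]$ satisfying $V\circ T\leq \lambda V$ for some $\lambda\in (0,1)$ must have $\osp{V}\leq 0$, i.e. $\osp{V}$ cannot be strictly positive. Since $V$ is nonnegative, this forces $\osp{V}=0$ and rules out the second requirement of Definition~\ref{lyapdef}.

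First I would fix an arbitrary $x\in\xin$. By hypothesis $x$ is a fixed point of $T$, so $T(x)=x$. The decrease condition $V\circ T\leq \lambda V$ then gives $V(x)=V(T(x))\leq \lambda V(x)$, and this is exactly the computation already carried out in the second point of Proposition~\ref{fixlyap}. If $V(x)$ were finite, then $(1-\lambda)V(x)\leq 0$ with $1-\lambda>0$ would force $V(x)=0$; if $V(x)=+\infty$, the inequality $+\infty\leq \lambda\cdot(+\infty)$ is satisfied but does not constrain $V(x)$ to be finite. Thus, strictly speaking, Proposition~\ref{fixlyap} only gives $V(x)\in\{0,+\infty\}$ on $\xin$, and the danger is that $V$ might take the value $+\infty$ on $\xin$, which would make $\osp{V}=+\infty>0$.

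The key observation that resolves this danger is the \emph{finiteness} requirement hidden in the target quantity $\osp{V}$. Recall that $\osp{V}:=\sup\{V(x):x\in\xin\}$, and the Opt-Lyapunov definition demands $\osp{V}\in(0,1]$; in particular it must be \emph{finite}. So I would argue by contraposition relative to the Opt-Lyapunov setting: I want to show $\osp{V}>0$ is incompatible with the standing data. Suppose toward a contradiction that $\osp{V}>0$ while $\osp{V}\leq 1$ (the value must lie in $(0,1]$ to be a candidate). Finiteness of $\osp{V}$ forces $V(x)<+\infty$ for every $x\in\xin$; by the fixed-point computation above, this yields $V(x)=0$ for every $x\in\xin$. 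Taking the supremum over $\xin$ gives $\osp{V}=0$, contradicting $\osp{V}>0$. Hence no such $V$ with $\osp{V}\in(0,1]$ exists, which is precisely the assertion that $\osp{V}>0$ cannot hold under the boundedness built into the Opt-Lyapunov definition.

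The main obstacle I expect is clarifying which boundedness is in force: the raw hypotheses ($V:\rd\mapsto[0,+\infty]$ and $V\circ T\leq \lambda V$) alone permit $V\equiv+\infty$ on $\xin$, giving $\osp{V}=+\infty$, so the conclusion ``$\osp{V}>0$ cannot hold'' is false without the finiteness constraint. The corollary is meant to be read inside the Opt-Lyapunov framework, where $\osp{V}$ is required to belong to $(0,1]$ and is therefore finite. The cleanest route is to make this explicit: under the Opt-Lyapunov normalization $\osp{V}\leq 1<+\infty$, finiteness propagates pointwise to $\xin$, the fixed-point argument collapses $V$ to $0$ on $\xin$, and the positivity requirement $\osp{V}>0$ becomes impossible. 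I would therefore phrase the proof so that the finiteness of $\osp{V}$ is invoked first, converting the $\{0,+\infty\}$ dichotomy of Proposition~\ref{fixlyap} into the single value $0$ on all of $\xin$.
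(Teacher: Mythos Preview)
Your proof is correct and matches the paper's intended argument: the corollary is stated without proof immediately after Proposition~\ref{fixlyap}, and the route is exactly the fixed-point computation $V(x)=V(T(x))\leq\lambda V(x)$ for each $x\in\xin$, followed by taking the supremum. Your explicit treatment of the $V=+\infty$ edge case is a useful clarification that the paper glosses over---as you observe, the literal hypotheses permit $\osp{V}=+\infty$, and the conclusion only stands once the Opt-Lyapunov normalization $\osp{V}\in(0,1]$ (hence finiteness on $\xin$) is invoked, which you do correctly.
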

Corollary~\ref{coronofix} extends the fact that if a classical Lyapunov function exists for a self-map $T$, $T$ cannot have a nonzero fixed point. If an Opt-Lyapunov function for $(\xin,T)$ exists, $\xin$ contains a vector $x$ such that $x\neq f(x)$.

In fact, we can equivalently replace the second statement of Def.~\ref{lyapdef} by $\osp{V}$ is finite and strictly positive. Indeed, under this condition, we recover an Opt-Lyapunov function for $(\xin,T)$ with $\osp{W}\in (0,1]$ by taking $W:=V/\red{\osp{V}}$.

\begin{proposition}
\label{similaroptlyap}
The existence of a function $W:\rd\mapsto [0,+\infty]$ such that 
\begin{enumerate}
 \item $\osp{W}$ is strictly positive and finite ;
 \item $W\circ T\leq \rho \circ W$ for a function $\rho:[0,+\infty]\to [0,+\infty]$ such that:
 \begin{itemize}
  \item $\rho(x)=0 \iff x=0$ and $\rho(x)=+\infty \iff x=+\infty$;  
  \item $\rho_{\mid \rr_+}\in \kcls$;
  \item $\rho(x)<x$ for all $x\in\rr_+^*$.
  \end{itemize}  
\end{enumerate}
is equivalent to the existence of an Opt-Lyapunov function for $(\xin,T)$.
\end{proposition}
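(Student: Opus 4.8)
I will prove the two implications separately; the direction ``Opt-Lyapunov function $\Rightarrow$ pair $(W,\rho)$'' is immediate, while the converse needs a construction. For the easy direction, given an Opt-Lyapunov function $V$ for $(\xin,T)$ with contraction rate $\lambda\in(0,1)$, I set $W:=V$ and let $\rho:[0,+\infty]\to[0,+\infty]$ be $\rho(x):=\lambda x$ on $\rr_+$ and $\rho(+\infty):=+\infty$. Then $\osp{W}=\osp{V}\in(0,1]$ is finite and strictly positive; $\rho_{\mid \rr_+}$ is linear, hence belongs to $\kcls$; $\rho(x)=0\iff x=0$ and $\rho(x)=+\infty\iff x=+\infty$; $\rho(x)=\lambda x<x$ for all $x\in\rr_+^*$; and $W\circ T=V\circ T\le\lambda V=\rho\circ W$. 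So the pair $(W,\rho)$ has all the required properties.

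For the converse, assume $W$ and $\rho$ are as in the statement. Put $M:=\osp{W}\in\rr_+^*$ and let $s_k:=\rho^k(M)$ denote the $k$-fold composition applied to $M$. Since $\rho_{\mid \rr_+}\in\kcls$ is strictly increasing and $\rho(x)<x$ on $\rr_+^*$, the sequence $(s_k)_{k\in\nn}$ lies in $\rr_+^*$, is strictly decreasing, and converges to $0$ (its limit is a fixed point of the continuous map $\rho_{\mid \rr_+}$, hence $0$). I then pick $\phi\in\kcls$ with $\phi(s_k)=2^{-k}$ for every $k\in\nn$ --- for instance by piecewise-affine interpolation on each $[s_{k+1},s_k]$, an affine extension on $[M,+\infty)$, and $\phi(0):=0$ (continuity at $0$ holds because $2^{-k}\to 0$) --- and define
\[
V:\rd\to[0,+\infty],\qquad V(x):=\sup_{k\in\nn}2^{k}\,\phi\bigl(W(T^k(x))\bigr)\enspace .
\]
A reindexing shows $V(T(x))=\sup_{j\ge 1}2^{\,j-1}\phi\bigl(W(T^j(x))\bigr)\le\tfrac12\,V(x)$ for all $x\in\rd$, so condition~3 of Definition~\ref{lyapdef} holds with $\lambda=\tfrac12$. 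For $x\in\xin$, an induction on $k$ using $W\circ T\le\rho\circ W$ and the monotonicity of $\rho$ gives $W(T^k(x))\le\rho^k(W(x))\le\rho^k(M)=s_k$, whence $2^{k}\phi\bigl(W(T^k(x))\bigr)\le 2^{k}\phi(s_k)=1$ and therefore $\osp{V}\le 1$. Finally, $\osp{W}>0$ yields some $x\in\xin$ with $W(x)>0$, and then $V(x)\ge\phi(W(x))>0$ since $\phi\in\kcls$; hence $\osp{V}\in(0,1]$ and $V$ is an Opt-Lyapunov function for $(\xin,T)$.

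The only genuinely delicate point is the converse direction, and the single idea that carries it is to absorb the possibly non-geometric decay governed by $\rho$ into the rescaling $\phi$, chosen along the orbit $(\rho^k(M))_k$ of the worst-case value $M=\osp{W}$; once this is done, the ``supremum of discounted values'' construction produces the geometric factor $\tfrac12$ for free, exactly as in the Yoshizawa-type constructions alluded to in the introduction. The remaining verifications are routine but require some care with extended-real arithmetic (the value $+\infty$ is allowed outside $\xin$, and $V$ may well take it there) and with checking that $\phi$ can genuinely be taken in $\kcls$, which reduces to the strict monotone decay of $(s_k)_k$ to $0$ established above.
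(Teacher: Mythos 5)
Your proof is correct, but the converse direction takes a genuinely different route from the paper's. The paper handles ``$(W,\rho)$ exists $\Rightarrow$ Opt-Lyapunov exists'' by conjugating $\rho$ to a linear contraction: it invokes Lemma~\ref{lemmafact} to produce $g\in\kcls$ with $g(\rho(x))=\lambda g(x)$ on all of $\rr_+$, extends $g$ by $g(+\infty)=+\infty$, and then $(g\circ W)/\osp{g\circ W}$ is directly an Opt-Lyapunov function --- a pointwise recipe whose decrease rate $\lambda$ is inherited from the functional equation. You instead bypass Lemma~\ref{lemmafact} entirely: you only need to tame the single $\rho$-orbit $s_k=\rho^k(\osp{W})$, build $\phi\in\kcls$ by interpolation along that orbit, and let the Yoshizawa-type discounted supremum $V(x)=\sup_k 2^k\phi(W(T^k(x)))$ manufacture the geometric rate $1/2$ by reindexing. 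What your approach buys is self-containedness and economy of hypotheses (you never use that $\rho$ conjugates globally to a contraction, only the decay of one orbit, and continuity of $\phi$ is not even essential); what it costs is that your $V$ depends on the entire forward orbit of $x$ rather than being a pointwise transform of $W$, which is less explicit and less in the spirit of this particular proposition (the paper reserves the Yoshizawa construction for Th.~\ref{converseoptlyap}). Two small points to tidy: you should state explicitly that $\phi(+\infty):=+\infty$, since $W$ is extended-real-valued and $V$ must be defined on all of $\rd$ (for $x\in\xin$ this never arises because $W(T^k(x))\leq s_k<+\infty$, but for general $x$ it does); and the induction $W(T^k(x))\leq\rho^k(W(x))$ silently uses that $\rho$ is increasing on $[0,+\infty]$, which is worth one clause. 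Both directions of the easy implication match the paper exactly.
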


\begin{lemma}
\label{poslemma}
Let $\gamma$ be strictly greater than 1. Let $f\in\kclsi$ verifying for all $x>0$, $f(x)>x$. Then, there exists a function $g\in\kcls$ that satisfies $g(f(x))=\gamma g(x)$ for all $x\in\rr_+$.
\end{lemma}

Lemma~\ref{poslemma} is a slightly modified version of~\cite[Lemma 25]{DBLP:journals/mcss/Kellett14}, for which $f-\Idd$ is supposed to be in $\kclsi$. To be self-contained, we propose in the Appendix a proof inspired by the one provided in~\cite{DBLP:journals/mcss/Kellett14}.

\begin{lemma}
\label{lemmafact}
Let $\lambda\in (0,1)$. Let $f\in\kcls$ verifying for all $x>0$, $f(x)<x$. Then, there exists a function $g\in\kcls$ satisfying $g(f(x))=\lambda g(x)$ for all $x\in\rr_+$.
\end{lemma}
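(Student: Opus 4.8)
The plan is to build $g$ explicitly from the orbit structure of $f$. I first record the elementary facts: $f\in\kcls$ and $f(x)<x$ for all $x>0$ force $f$ to be strictly increasing and continuous on $\rr_+$, with $0$ as its only fixed point, so that for every $x>0$ the forward orbit $(f^{n}(x))_{n\in\nn}$ is strictly decreasing and tends to $0$. Fix any base point $a>0$ and put $b:=f(a)\in(0,a)$. The half-open intervals $I_{n}:=(f^{n+1}(a),f^{n}(a)]$ ($n\in\nn$, so $I_{0}=(b,a]$) are pairwise disjoint with $\bigcup_{n\in\nn}I_{n}=(0,a]$, and $f^{n}$ restricts to a strictly increasing continuous bijection of $I_{0}$ onto $I_{n}$. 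The strategy is to prescribe $g$ on the ``fundamental'' interval $[b,a]$ and then propagate it towards $0$ and towards $+\infty$ in the only way compatible with $g\circ f=\lambda g$.

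Concretely I would take $g$ on $[b,a]$ to be the affine function with $g(b)=\lambda$ and $g(a)=1$ (strictly increasing, continuous, and with $g>0$ on $(b,a]$). Downwards: for $x\in(0,b]$ there is a unique $n\ge 1$ and a unique $y\in(b,a]$ with $x=f^{n}(y)$, and I set $g(x):=\lambda^{n}g(y)$; and $g(0):=0$. Upwards: for $x>a$ set $k(x):=\min\{j\ge 1:f^{j}(x)\le a\}$, which is finite since $f^{j}(x)\to 0$; then $f^{k(x)}(x)\in(0,a]$ and I set $g(x):=\lambda^{-k(x)}g\bigl(f^{k(x)}(x)\bigr)$. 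One checks this matches the affine value at $x=a$ (there $k(a)=1$ and $\lambda^{-1}g(f(a))=\lambda^{-1}g(b)=1$).

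Then I would check the three requirements. Consistency and continuity at the gluing points $b$, $a$, the $f^{n}(a)$, and $0$ all boil down to the single identity $g(f^{n}(a))=\lambda^{n}g(a)=\lambda^{n-1}g(b)$, which is built in by $g(b)=\lambda g(a)$; elsewhere $g$ is plainly continuous ($g$ is affine on $[b,a]$, a composition of continuous maps on each $I_{n}$ and on each maximal interval where $k(\cdot)$ is constant), and $g(x)\to 0$ as $x\to 0^{+}$ since $g(I_{n})=(\lambda^{n+1},\lambda^{n}]$. Strict monotonicity holds because on each of these pieces $g$ is strictly increasing (affine, or $\lambda^{n}$ times an increasing function, or $\lambda^{-k}$ times an increasing function), the ranges $g(I_{n})=(\lambda^{n+1},\lambda^{n}]$ are correctly ordered, and $g$ exceeds $g(a)=1$ just past $a$; with $g(0)=0$ this yields $g\in\kcls$ (note $g$ need not tend to $+\infty$, which is allowed). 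Finally the identity $g(f(x))=\lambda g(x)$ is verified case by case: for $x=0$ trivially; for $x=f^{n}(y)\in(0,b]$ one has $f(x)=f^{n+1}(y)$, so $g(f(x))=\lambda^{n+1}g(y)=\lambda g(x)$; for $x\in(b,a]$ directly from the $n=1$ propagation rule; and for $x>a$ either $k(x)=1$, where $g(x)=\lambda^{-1}g(f(x))$ by definition, or $k(x)\ge 2$, where $k(f(x))=k(x)-1$ and $g(f(x))=\lambda^{-(k(x)-1)}g(f^{k(x)}(x))=\lambda g(x)$.

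The point that makes this more than a one-line corollary of Lemma~\ref{poslemma} is that $f$ need not be onto $\rr_+$: if $f\in\kcls\setminus\kclsi$ then $f^{-1}$ is defined only on the proper interval $[0,\sup_{\rr_+}f)$, so one cannot recover $g$ by applying Lemma~\ref{poslemma} to $f^{-1}$ and substituting $x\mapsto f(x)$, nor by tiling $(0,+\infty)$ with a bi-infinite family of iterates of $f$. The device that makes the construction uniform is to extend $g$ past $a$ by iterating $f$ \emph{forward} until the orbit re-enters $(0,a]$ (the integer $k(x)$), rather than by inverting $f$; after that, the remaining obstacle is only the routine bookkeeping of continuity and monotonicity across the countably many junction points, which I sketched above.
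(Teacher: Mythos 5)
Your proof is correct, and it takes a genuinely different route from the paper's. The paper proves Lemma~\ref{lemmafact} by applying Lemma~\ref{poslemma} to $f^{-1}$ with $\gamma=\lambda^{-1}$ and then substituting $x=f(y)$; you instead build $g$ directly from the orbit structure: tile $(0,a]$ by the intervals $(f^{n+1}(a),f^{n}(a)]$, prescribe $g$ affinely on the fundamental domain $[f(a),a]$, propagate downward by powers of $\lambda$, and reach the points above $a$ by iterating $f$ \emph{forward} until the orbit enters $(0,a]$. What your construction buys is exactly the point you flag at the end: the paper's reduction tacitly needs $f^{-1}$ to be an element of $\kclsi$, hence $f$ to be surjective onto $\rr_+$, whereas the hypothesis is only $f\in\kcls$. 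For $f\in\kcls\setminus\kclsi$ (e.g.\ $f(x)=x/(1+x)$) the inverse is defined only on $[0,\sup_{\rr_+}f)$ and blows up at the right endpoint of that interval, so it admits no real-valued increasing extension to all of $\rr_+$ and Lemma~\ref{poslemma} cannot be invoked as stated; your device $k(x)=\min\{j\ge 1: f^{j}(x)\le a\}$ is precisely what is needed to cover this case, which does arise in the paper's application (Prop.~\ref{similaroptlyap}, where $\rho_{\mid \rr_+}$ is only assumed to lie in $\kcls$). The price of your route is the bookkeeping across countably many junctions, which you handle correctly: the ranges $(\lambda^{n+1},\lambda^{n}]$ and $(\lambda^{-(k-1)},\lambda^{-k}]$ of the successive pieces are adjacent and correctly ordered, and the single built-in identity $g(f(a))=\lambda g(a)$ makes every gluing consistent, after which the relation $g\circ f=\lambda g$ follows from your case analysis. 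In short, your argument is longer but self-contained and strictly more general than the paper's, and it repairs an overlooked surjectivity issue in the inverse-based reduction.
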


Again, a proof of Lemma~\ref{lemmafact} is given in~\cite{DBLP:journals/mcss/Kellett14}. A proof is provided in the Appendix for the convenience of the reader.

\begin{proof}[Proof of Prop.~\ref{similaroptlyap}]
Suppose that such $W$ and $\rho$ exist, then let $\lambda\in (0,1)$. From Lemma~\ref{lemmafact}, there exists $g$ such that $g(\rho(x))=\lambda g(x)$ for all $x\in\rr_+$. By extension, we define $g(+\infty)=+\infty$. Then as $g$ is increasing, we have $g(W(T(x)))\leq g(\rho(W(x))=\lambda g(W(x))$ for all $x\in\rd$. Now, as $g$ is strictly increasing, there exists some $x\in\xin$ such that $g(W(x))$ is strictly positive and $\osp{g\circ W}$ is strictly positive and finite. Finally, $(g\circ W)/\osp{g\circ W}$ is an Opt-Lyapunov function for $(\xin,T)$. 

The reverse implication is obvious with $\rho(x)=\lambda x$ for all $x\in \rr_+$.
\end{proof}

Prop.~\ref{lyapclassicisopt} shows that the set of Opt-Lyapunov functions contains the \red{set of} classical Lyapunov functions.
\begin{proposition}[Classical is Opt]
	\label{lyapclassicisopt}
	If $V$ is a classical Lyapunov function for $T$ and $\xin$ is \red{bounded} and not reduced to $\{0\}$, then $V/\osp{V}$ is an Opt-Lyapunov function for $(\xin,T)$.
\end{proposition}

\begin{proof}
	Let $V$ be a classical Lyapunov function for $T$. If $\xin$ is not reduced to $\{0\}$, as $V$ is positive definite, $V(x)>0$ for some $x\in\xin$. From ~\eqref{eq:positivitylyap}, if $\xin$ is bounded, $\osp{V}$ is finite. This implies that $V/\osp{V}$ satisfies the second statement in Definition~\ref{lyapdef}. Moreover, it is obvious that $V/\osp{V}$ is positive definite and so verifies the first statement of Definition~\ref{lyapdef}. It is easy to see that $V/\osp{V}$ satisfies the third statement of Definition~\ref{lyapdef} from \eqref{eq:decreaselyap} and $\osp{V}>0$. 
\end{proof}	

\subsubsection{Operator Ratios From Proper Positive Semidefinite Functions}
\label{oppsd}

We extend the concept of operator ratios developed in~\cite{adje2025kllyap} (specialized at Eq.~\eqref{eq:opratio} for Lyapunov functions) for which the arguments are positive semidefinite functions rather than positive definite functions.  The arguments can also be functions that take infinite values. Therefore, let us denote by $\PSD$ the set of nonzero proper positive semidefinite functions from $\rd$ to $[0,+\infty]$, that is,
\begin{equation}
\label{psddef}
\PSD:=\{f:\rd\mapsto [0,+\infty] : \exists\, x\in\rd,\ f(x)\in\rr_+^*\}
\end{equation}
We can extend the operator ratio function to $\PSD$ functions. 
\begin{defi}[Operator Ratios PSD Based]
\label{extopdef}
Let $U:\rd \mapsto \rd$. We associate to $U$ the operator ratio function $\newop{U}:\PSD \mapsto [0,+\infty]$ defined as follows: 
\[
\red{\PSD\ni P\mapsto} \newop{U}(P):=\sup_{P(x)\in \rr_+^*} \dfrac{P(U(x))}{P(x)}
\]
\end{defi}
Recall that $\Idd$ denotes the identity function $\rd\ni x:\mapsto x$. It is straightforward to see that if $P\in\PSD$, $\newop{\Idd}(P)=1$. 

The operator function is well-defined, but the inequality $\newop{U^k}(P)\leq \newop{U}(P)^k$ can fail to be true, as shown in Example~\ref{counterex}.

\begin{example}[A counterexample to the power commutation]
\label{counterex}
Let us consider the functions $U:x \mapsto -x $ and $P:x\mapsto \max\{x,0\}$. We have $\newop{U}(P)=\sup_{P(x)>0} P(U(x))/P(x)=\sup_{x>0} P(-x)/x=\sup_{x>0} 0/x=0$ whereas $\newop{U^2}(P)=\sup_{x>0} P(x)/x=\sup_{x>0} x/x=1$. Finally, we have $\newop{U^2}(P)>\newop{U}(P)^2$.
\end{example}

To obtain the inequality $\newop{U^k}(P)\leq \newop{U}(P)^k$, we have to consider a smaller class of operators $P$ and we introduce for a self-map $U:\rd\mapsto \rd$, the set $\defNU{U}$ of operators defined as follows:
\begin{equation}
\defNU{U}:=
\{P\in\PSD : P(U(x))=0\implies P(U^2(x))=0\}
\end{equation}
We remark that $P\in\defNU{U}$ if and only if $P(U^2(x))\in (0,+\infty] \implies P(U(x)) \in (0,+\infty]$. Moreover, if $P\in\defNU{U}$, this leads to for all $k\geq 1$, $P(U^{k+1}(x))\in (0,+\infty]\implies P(U^{k}(x))\in (0,+\infty]$ and we also have if $P(U^k(x))\in (0,+\infty]$ for some $k\geq 1$ then $P(U^j(x))\in (0,+\infty]$ for all $1\leq j\leq k$.

\begin{example}[$P$ not in $\defNU{U}$]  
	\red{The functions $U$ and $P$ are the ones defined in Example~\ref{counterex}}. Let $x>0$ be strictly positive. Then, $P(U(x))=P(-x)=0$. However, $P(U^2(x))=P(x)=x>0$. So $P\notin\defNU{U}$. 
\end{example}

\begin{proposition}
\label{compoop}
Let $U:\rd\mapsto \rd$ and $P\in\defNU{U}$. Then, for all $k\in\nn^*$, $\newop{U^k}(P)\leq \left(\newop{U}(P)\right)^k$ and if $\newop{U}(P)$ is finite this inequality holds for $k=0$. 
\end{proposition}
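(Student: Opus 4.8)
The plan is to unfold the definition $\newop{U^k}(P)=\sup_{P(x)\in\rr_+^*}P(U^k(x))/P(x)$ and telescope along the orbit $x,U(x),U^2(x),\dots$ of a point, using the defining property of $\defNU{U}$ — namely that $P(U(x))=0$ forces $P(U^2(x))=0$, hence (as already observed in the excerpt) $P(U^k(x))\in(0,+\infty]$ for some $k\ge1$ forces $P(U^j(x))\in(0,+\infty]$ for all $1\le j\le k$ — to rule out the degenerate situations. First I would dispose of the case $\newop{U}(P)=+\infty$, where $\big(\newop{U}(P)\big)^k=+\infty$ for $k\ge1$ and the inequality is trivial. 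So assume $\newop{U}(P)<+\infty$, fix $k\in\nn^*$ and any $x\in\rd$ with $P(x)\in\rr_+^*$; the goal is $P(U^k(x))\le\big(\newop{U}(P)\big)^k\,P(x)$, and taking the supremum over such $x$ afterwards gives the claim.

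If $P(U^k(x))=0$ there is nothing to prove, the right-hand side being nonnegative. Otherwise $P(U^k(x))\in(0,+\infty]$, so by the $\defNU{U}$ chain property $P(U^j(x))\in(0,+\infty]$ for every $0\le j\le k$ (at $j=0$ this is the hypothesis $P(x)\in\rr_+^*$). The key point is then that all these values are in fact \emph{finite}: if not, let $j\ge1$ be minimal with $P(U^j(x))=+\infty$; then $P(U^{j-1}(x))$ is finite, and it lies in $(0,+\infty]$ — by the $\defNU{U}$ chain property if $j-1\ge1$, or by hypothesis if $j-1=0$ — hence $P(U^{j-1}(x))\in\rr_+^*$. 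But then
\[
\newop{U}(P)\ \ge\ \frac{P\big(U(U^{j-1}(x))\big)}{P(U^{j-1}(x))}\ =\ \frac{P(U^{j}(x))}{P(U^{j-1}(x))}\ =\ +\infty,
\]
contradicting $\newop{U}(P)<+\infty$. So $P(U^j(x))\in\rr_+^*$ for all $0\le j\le k$, and since each $U^j(x)$ is then an admissible point in the supremum defining $\newop{U}(P)$ we may telescope:
\[
\frac{P(U^k(x))}{P(x)}\ =\ \prod_{j=0}^{k-1}\frac{P\big(U(U^j(x))\big)}{P(U^j(x))}\ \le\ \big(\newop{U}(P)\big)^k .
\]
This yields $P(U^k(x))\le\big(\newop{U}(P)\big)^kP(x)$, hence $\newop{U^k}(P)\le\big(\newop{U}(P)\big)^k$. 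For $k=0$ one simply recalls $\newop{U^0}(P)=\newop{\Idd}(P)=1=\big(\newop{U}(P)\big)^0$, which is meaningful precisely when $\newop{U}(P)$ is finite.

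The only delicate step, and the one I would write out most carefully, is this bookkeeping with possible $+\infty$ values in the middle of the orbit — the minimality argument above is exactly what is needed so that the telescoping product is over genuine finite positive ratios each bounded by $\newop{U}(P)$. Everything else is routine. An equally short alternative, if one prefers to avoid the global telescoping, is a direct induction on $k$: the step $\newop{U^{k+1}}(P)\le\big(\newop{U}(P)\big)^{k+1}$ follows by applying the inductive hypothesis at the point $U(x)$ when $P(U(x))\in\rr_+^*$, while the cases $P(U(x))=0$ (which by $\defNU{U}$ propagates $P(U^{j}(x))=0$ for all $j\ge1$, so $P(U^{k+1}(x))=0$) and $P(U(x))=+\infty$ (which forces $\newop{U}(P)=+\infty$) are immediate.
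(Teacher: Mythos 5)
Your proof is correct and follows essentially the same route as the paper's: reduce to the one-step bound $P(U^{j}(x))\leq \newop{U}(P)\,P(U^{j-1}(x))$ along the orbit, using the $\defNU{U}$ chain property to propagate zeros, and then combine (you telescope the product; the paper phrases the same combination as an induction $\newop{U^k}(P)\leq \newop{U}(P)\,\newop{U^{k-1}}(P)$). If anything, your explicit minimality argument ruling out $+\infty$ values in the middle of the orbit is a point the paper's proof leaves implicit, so it is a welcome addition rather than a deviation.
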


\begin{proof}
For $k=0$, the result holds as $\newop{\Idd}(P)=1$ and there exists $x\in\rd$ such that $P(x)>0$. If $\newop{U}(P)$ is finite, $(\newop{U}(P))^0$ is defined and equal to 1. 

Now let $k\in\nn^*$. If $\newop{U}(P)=+\infty$, then the inequality is obviously true. Suppose now that $\newop{U}(P)<+\infty$. From the defintion of $\newop{U}(P)$, we have for all $x\in\rd$ and all $j\in\nn^*$ such that $P(U^{j-1}(x))\in\rr_+^*$, $P(U^{j}(x))\leq \newop{U}(P) P(U^{j-1}(x))$. From the fact that $P\in\defNU{U}$, we also have for all $x\in\rd$ and all $j\in\nn^*$ such that $P(U^{j}(x))=0$, $P(U^{j+1}(x))\leq \newop{U}(P) P(U^{j}(x))$ as it implies that $P(U^{j+1}(x))=0$. Finally, we have, for all $x\in\rd$ such that $P(x)>0$, $P(U^{k}(x))\leq \newop{U}(P) P(U^{k-1}(x))$. By dividing by $P(x)$ and taking the supremum, we obtain $\newop{U^k}(P)\leq \newop{U}(P)\newop{U^{k-1}}(P)$ and we conclude by recurrence.
\end{proof}

\begin{proposition}
\label{op01}
If $V$ is an Opt-Lyapunov function for $(\xin,T)$, then $V\in\defNU{U}$ and $\newop{T}(V)\in [0,1)$.
\end{proposition}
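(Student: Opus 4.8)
The plan is to check the three things that the statement bundles together: that $V$ lies in $\PSD$ (so that $\newop{T}(V)$ is even meaningful), that $V$ lies in $\defNU{T}$, and that the ratio $\newop{T}(V)$ falls in $[0,1)$. All three follow quickly from the three items of Def.~\ref{lyapdef}. First, for $V\in\PSD$: by definition $V$ maps $\rd$ into $[0,+\infty]$, and the condition $\osp{V}\in(0,1]$ gives in particular $\sup_{x\in\xin}V(x)>0$, so there is some $x\in\xin$ with $V(x)>0$; for that $x$ we also have $V(x)\le\osp{V}\le 1<+\infty$, hence $V(x)\in\rr_+^*$, which is exactly the membership condition of Eq.~\eqref{psddef}. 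In particular the supremum defining $\newop{T}(V)$ in Def.~\ref{extopdef} is taken over a nonempty set.

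Next, for $V\in\defNU{T}$: I would fix an arbitrary $x\in\rd$ with $V(T(x))=0$ and apply the decrease condition $V\circ T\le\lambda V$ of Def.~\ref{lyapdef} at the point $T(x)$. This gives $0\le V(T^2(x))=V\bigl(T(T(x))\bigr)\le\lambda V(T(x))=0$, so $V(T^2(x))=0$; since this is exactly the implication required in the definition of $\defNU{T}$, we conclude $V\in\defNU{T}$.

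Finally, for the bound on $\newop{T}(V)$: take any $x$ with $V(x)\in\rr_+^*$. The decrease condition yields $V(T(x))\le\lambda V(x)<+\infty$, so the quotient $V(T(x))/V(x)$ is a well-defined nonnegative real which is at most $\lambda$. Taking the supremum over all such $x$ gives $0\le\newop{T}(V)\le\lambda<1$. The only thing to be mildly careful about — rather than a genuine obstacle — is the finiteness of the quantities involved, so that the ratio and the implication in $\defNU{T}$ make sense; this is automatic because $V\circ T\le\lambda V$ forces $V(T(x))$ finite whenever $V(x)$ is, and $\osp{V}\le 1$ guarantees at least one finite positive value of $V$.
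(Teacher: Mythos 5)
Your proof is correct and follows essentially the same route as the paper: positivity of $\osp{V}$ gives a point where $V$ is finite and strictly positive (so $V\in\PSD$ and the supremum defining $\newop{T}(V)$ is over a nonempty set), the decrease condition applied at $T(x)$ gives the $\defNU{T}$ implication, and the same condition applied at points with $V(x)\in\rr_+^*$ bounds the ratio by $\lambda<1$. Your version is slightly more explicit about finiteness and about checking membership in $\PSD$, but the argument is the same.
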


\begin{proof}
First, $V^{-1}(\rr_+^*)$ is nonempty from the second statement of Def. ~\ref{lyapdef}. The third statement implies that $V(T^2(x))\leq \lambda V(T(x))$ for all $x\in\rd$ for some $\lambda\in (0,1)$ and thus $V(T^2(x))>0$ implies that $V(T(x))>0$. We conclude that $V\in\defNU{T}$. The third statement also implies that $V(T(x))\leq \lambda V(x)$ for all $x\in\dom(V)$ for some $\lambda\in (0,1)$ and thus $N_T(V)\leq\lambda<1$. It can be equal to 0 if and only if for all $x$ such that $V(x)>0$, $V(T(x))=0$.   
\end{proof}

\begin{example}[Operator ratio for $\lyaples$]
We compute the operator ratio of $L$ at the Opt-Lyapunov function $\lyaples$ proposed in Equation~\eqref{eq:opt-lyap-leslie}.
\[
\newop{L}(\lyaples)=\sup_{x\in {\lyaples}^{-1}(\rr_+^*)}  \dfrac{\lyaples(Lx)}{\lyaples(x)}=\sup_{x\in \lyaples^{-1}(\rr_+^*)} \dfrac{x^\intercal x}{x^\intercal L^\intercal L x}=\left(\inf_{x_1>0,\ x_2>0} \dfrac{x^\intercal L^\intercal L x}{x^\intercal x}\right)^{-1} 
\]
Finally, we can use a characterization of \emph{copositive matrices} (see for example ~\cite{doi:10.1137/090750391} and references therein) to compute $\newop{L}{\lyaples}$. Indeed, we have:  
\[
\inf_{\substack{x_1>0\\ x_2>0}} \dfrac{x^\intercal L^\intercal L x}{x^\intercal x}=\sup\left\{\lambda\geq 0: x^\intercal L^\intercal L x-\lambda x^\intercal x\geq 0 \text{ on } (\rr_+^*)^2\right\}=\sup\left\{\lambda\geq 0: L^\intercal L -\lambda\Idd \text{ is copositive}\right\}
\]

From~\cite[Proposition 2.1]{doi:10.1137/090750391}, $L^\intercal L x-\lambda\Idd=\begin{pmatrix} a^2+c^2-\lambda & ab \\ ab & b^2-\lambda\end{pmatrix}$ is copositive if and only if $a^2+c^2-\lambda\geq 0 $, $b^2-\lambda\geq 0$ and $\sqrt{(a^2+c^2-\lambda)(b^2-\lambda)}+ab\geq 0$. Since $a\geq 1$ and $b>1$, the third inequality is a consequence of the first two conditions. We conclude that:
\[
\newop{L}(V)=\sup_{x\in V^{-1}(\rr_+^*)} \dfrac{x^\intercal x}{x^\intercal L^\intercal L x}=\dfrac{1}{\min\{a^2+c^2,b^2\}}
\]

Applying the computations to the numerical data of Subsection~\ref{subsec:numericaldata}, we have \[\newop{L}(\lyaples)=\dfrac{4}{5}.\]
\qed
\end{example}
To prove that a function $V\in\PSD$ is an Opt-Lyapunov function for $(\xin,T)$, we could use any value greater $\newop{T}(V)$. The computation of $\newop{T}(V)$ ensures that the best value possible is obtained. This is important in the context of using the function $\Fnu(k,h,\beta)$ with an Opt-Lyapunov function because $\beta\mapsto \Fnu(k,h,\beta)$ is decreasing.


\red{For classical Lyapunov functions, the extended version (on positive semidefinite functions) of operator ratios (Definition~\ref{extopdef}) coincides with the positive definite version (recalled in~\eqref{eq:opratio}).}
\begin{proposition}
\label{nopnewop}
Let $U:\rd\mapsto \rd$. Suppose that there exists a classical Lyapunov function $V$ for $U$. Then, $V\in\defNU{U}$ and $\newop{U}(V)=N_U(V)$.
\end{proposition}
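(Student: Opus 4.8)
The plan is to unpack exactly what a classical Lyapunov function gives us, then verify membership in $\defNU{U}$, and finally observe that the two supremum definitions range over literally the same index set.

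First I would record the consequences of the sandwich inequality $\alpha_1 \circ \norm{\cdot} \le V \le \alpha_2 \circ \norm{\cdot}$ with $\alpha_1,\alpha_2 \in \kclsi$ from Def.~\ref{lyapclassic}. Evaluating at $0$ gives $0 \le V(0) \le \alpha_2(0) = 0$, so $V(0) = 0$; and for $x \ne 0$ we get $V(x) \ge \alpha_1(\norm{x}) > 0$. Since a classical Lyapunov function is finite-valued, this shows $V^{-1}(\rr_+^*) = \rd \setminus \{0\}$, which is nonempty, so $V \in \PSD$. Next I would note that $U$ must fix the origin: applying the decrease condition $V \circ U \le \lambda V$ at $x = 0$ gives $0 \le V(U(0)) \le \lambda V(0) = 0$, hence $V(U(0)) = 0$, and by the previous observation $U(0) = 0$.

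Then I would check $V \in \defNU{U}$ directly from the definition. Suppose $V(U(x)) = 0$ for some $x \in \rd$; then $U(x) = 0$, hence $U^2(x) = U(U(x)) = U(0) = 0$, so $V(U^2(x)) = V(0) = 0$. This is exactly the implication $P(U(x)) = 0 \implies P(U^2(x)) = 0$ defining $\defNU{U}$, so $V$ lies in it.

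Finally, for the equality $\newop{U}(V) = N_U(V)$: by Def.~\ref{extopdef}, $\newop{U}(V) = \sup\{V(U(x))/V(x) : V(x) \in \rr_+^*\}$, whereas $N_U(V) = \sup\{V(U(x))/V(x) : x \ne 0\}$ by Eq.~\eqref{oplyapunov}. Since $V$ is finite-valued, $V(x) \in \rr_+^*$ is equivalent to $V(x) > 0$, which by the first step is equivalent to $x \ne 0$; thus the two suprema are taken over the identical set with the identical summand and hence coincide. I do not expect a genuine obstacle here; the only points requiring a moment of care are that the existence of a classical Lyapunov function forces $U(0) = 0$ (which is what makes the $\defNU{U}$ step go through) and that finiteness of $V$ collapses the condition $V(x) \in \rr_+^*$ to $x \ne 0$ (which is what makes the two supremum definitions agree).
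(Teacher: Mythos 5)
Your proof is correct and follows essentially the same route as the paper: establish $V(x)=0 \iff x=0$ from the sandwich inequality, use $U(0)=0$ to get membership in $\defNU{U}$, and observe that both suprema range over $\{x : V(x)>0\}=\rd\setminus\{0\}$. You spell out the derivation of $U(0)=0$ and argue the $\defNU{U}$ condition in the contrapositive direction, but these are only presentational differences from the paper's argument.
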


\begin{proof}
If there exists a Lyapunov function $V$ for $U$, then $U(0)=0$. Moreover, $V$ is positive definite. Thus if $V(U^2(x))>0$ then $U^2(x)\neq 0$ and thus $U(x)\neq 0$ hence $V(U(x))>0$ and $V\in\defNU{U}$. The equality between $\newop{U}(V)$ and $N_U(V)$ follows readily from the fact that $\{y\in\rd : V(y)>0\}=\rd\backslash \{0\}$. 
\end{proof}

\subsection{\protect$(\xin,T,\varphi)$-certificate of compatibility and \protect$(\xin,T,\varphi)$-compatibility}
\label{compatibility}
The construction of $h\in\secfunh(\nu)$ from a classical Lyapunov function $V$ is based on the existence of a function $\alpha\in\kclsi$ such that $\varphi\leq \alpha\circ V$. The existence of this latter function $\alpha$ is either assumed (with $\alpha=\Idd$) in Equation~\eqref{eq:hfromextlyap} or in Equation~\eqref{eq:hfromlyap} comes from the continuity of $\varphi$ and the definition of $V$ (see Equation~\eqref{eq:positivitylyap}). However, when $V$ is an Opt-Lyapunov function for $(\xin,T)$, $V$ is neither positive definite nor continuous. Thus, we cannot guarantee the existence of $\alpha\in\kclsi$ such that $\alpha\circ \varphi\leq V$. We must use a different class of functions to find a similar relation between $\varphi$ and $V$. 
First, we propose to restrict the interval in which a function $\alpha$ should be continuous and strictly increasing. Indeed, the analysis only concerns the values $\{\varphi(T^k(x)),k\in\nn, x\in \xin\}$ and $\{V(x), x\in \xin\}$. Hence, we introduce the following interval:
\[
\ivt:=\overline{\operatorname{conv}}(\{ \varphi(T^k(x)) : k\in\nn,\ x\in\xin\})\in\IR
\] 
\begin{proposition}
\label{ivtprop}
The following assertions hold:
\begin{enumerate}
\item For all $k\in\nn$, $\nu_k\in\ivt$. If the values are finite, $\limsup_{n\in\nn} \nu_n$ and $\liminf_{n\in\nn} \nu_n$ also belong to $\ivt$.
\item
\[
\ivt=\left[\inf_{k\in\nn} \nu_k, \sup_{k\in\nn} \nu_k\right]\cap \rr
\]
\end{enumerate}
\end{proposition}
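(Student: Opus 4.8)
The plan is to notice that $\ivt$, as the closed convex hull of a set of real numbers, is just a (possibly unbounded) closed interval, so that both assertions reduce to identifying its two endpoints in terms of the sequence $\nu$.

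First I would record the elementary fact that for any nonempty $S\subseteq\rr$ the set $\operatorname{conv}(S)=\{ts+(1-t)s' : s,s'\in S,\ t\in[0,1]\}$ is exactly the interval spanned by $\inf S$ and $\sup S$ (a subset of $\rr$ is convex if and only if it is an interval), and hence $\overline{\operatorname{conv}}(S)=[\inf S,\sup S]\cap\rr$, the intersection with $\rr$ being there only to discard an endpoint equal to $+\infty$ or $-\infty$. Applying this with $S:=\{\varphi(T^k(x)) : k\in\nn,\ x\in\xin\}$ turns the whole statement into a computation of $\inf S$ and $\sup S$.

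For the supremum I would slice $S$ along the time index: for each fixed $k$, $\sup\{\varphi(T^k(x)) : x\in\xin\}=\nu_k$ by \eqref{nu}, so $\sup S=\sup_{k\in\nn}\nu_k=\nuopt$. By Assumption~\ref{mainassum} each $\nu_k$ is real, hence $\nu_k$, being the supremum of the subset $\{\varphi(T^k(x)) : x\in\xin\}$ of $S$, belongs to $\overline{S}\subseteq\ivt$; this is the first half of assertion~1. Since $\ivt$ is then a closed interval containing every term $\nu_k$, it contains every finite cluster value of $(\nu_n)_n$, in particular $\limsup_{n\to +\infty}\nu_n$ and $\liminf_{n\to +\infty}\nu_n$ whenever these are finite, which finishes assertion~1; and, by convexity and closedness, $[\inf_{k}\nu_k,\sup_{k}\nu_k]\cap\rr\subseteq\ivt$, which is one inclusion of assertion~2.

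It remains to identify the left endpoint, i.e. to prove $\inf S=\inf_{k\in\nn}\nu_k$ (the right endpoint is already settled by $\sup S=\nuopt$). One inequality is immediate: $\inf_{x\in\xin}\varphi(T^k(x))\le\sup_{x\in\xin}\varphi(T^k(x))=\nu_k$ for every $k$, so $\inf S=\inf_{k}\inf_{x}\varphi(T^k(x))\le\inf_k\nu_k$. The step I expect to be the real obstacle is the matching bound $\inf S\ge\inf_{k}\nu_k$, equivalently that no reachable value $\varphi(T^k(x))$ can lie strictly below $\inf_j\nu_j$; ruling this out is precisely where the structure of the triple $(\xin,T,\varphi)$ has to be used rather than the bare sequence $\nu$ (it is automatic, for instance, whenever the infimal behaviour of $\varphi\circ T^k$ over $\xin$ is already recorded by $\nu$, as in the running example of Subsection~\ref{mainexample}, where both $\inf S$ and $\inf_k\nu_k$ equal $-\infty$). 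Once this inequality is in hand, $\inf S=\inf_k\nu_k$, the two intervals coincide, and assertion~2 follows.
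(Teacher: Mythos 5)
Your treatment of assertion~1 and of the inclusion $\left[\inf_{k}\nu_k,\sup_{k}\nu_k\right]\cap\rr\subseteq\ivt$ is correct and is essentially the paper's own argument: the paper likewise picks, for fixed $k$, points $x_n\in\xin$ with $\nu_k-1/n\leq\varphi(T^k(x_n))\leq\nu_k$ to place $\nu_k$ in the closed set $\ivt$, treats $\limsup_n\nu_n$ and $\liminf_n\nu_n$ as limits of subsequences of $(\nu_k)_k$, and then puts $\inf_k\nu_k$ and $\sup_k\nu_k$ themselves into $\ivt$ by a diagonal extraction, concluding by convexity and closedness. Your identification of the right endpoint, $\sup S=\sup_k\nu_k$ for $S:=\{\varphi(T^k(x)):k\in\nn,\ x\in\xin\}$, is also correct and settles the upper bound of $\ivt$.

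The step you single out as the obstacle, namely $\inf S\geq\inf_k\nu_k$, is not merely hard: it is false in general, and the paper's proof never addresses it either --- it only establishes the inclusion $\left[\inf_k\nu_k,\sup_k\nu_k\right]\cap\rr\subseteq\ivt$ and silently passes this off as the stated equality. Since each $\nu_j$ is a supremum over $\xin$, nothing prevents a reachable value $\varphi(T^k(x))$ from lying strictly below every $\nu_j$. Concretely, take $d=1$, $\xin=\{-1,1\}$, $T=\Idd$ and $\varphi=\Idd$ (so $\varphi(0)=0$ and Assumption~\ref{mainassum} holds): then $\nu_k=1$ for all $k$, so $\left[\inf_k\nu_k,\sup_k\nu_k\right]\cap\rr=\{1\}$, while $\ivt=\overline{\operatorname{conv}}(\{-1,1\})=[-1,1]$. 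So your instinct is exactly right, and no additional argument can close the gap: assertion~2 is only valid as the inclusion $\supseteq$ (which is all the paper proves and, as far as the subsequent use of $\ivt$ in $\scivt$ and Th.~\ref{converseoptlyap} is concerned, all that is needed on the upper side), or with the left endpoint replaced by $\inf_{k\in\nn}\inf_{x\in\xin}\varphi(T^k(x))$.
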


\begin{proof}
Let $k\in\nn$. There exists for all $n\in\nn$, $x_n\in\xin$ such that $\nu_k-1/n\leq \varphi(T^k(x_n))\leq \nu_k$ and thus the sequence $(\varphi(T^k(x_n)))_{n\in\nn}$ converges to $\nu_k$ and thus $\nu_k\in\ivt$. The values, if they are finite, $\limsup_{k\in\nn} \nu_k$ and $\liminf_{k\in\nn} \nu_k$ are limits of subsequences of $(\nu_k)_{k\in\nn}$ and they belongs to $\ivt$. From the definition of $\limsup$ (resp. $\liminf$), if $\limsup_{k\in\nn} \nu_k$ (resp. $\liminf_{k\in\nn} \nu_k$) is infinite, the value $\sup_{k\in
\nn} \nu_k$ (resp. $\inf_{k\in\nn} \nu_k$ is equal to $+\infty$ (resp. $-\infty$).
 
If one of the values $\inf_{k\in\nn} \nu_k$ or $\sup_{k\in\nn} \nu_k$ is not finite, the infinity passes to the related bound of $\ivt$. If those values are both finite, we can extract sequences in $\ivt$ which converge respectively to $\inf_{m\in\nn} \nu_m$ and $\sup_{m\in\nn} \nu_m$. For example, we can define $(\varphi(T^{k_n}(x_{k_n})))_n\subset \ivt$ such that for all $n\in\nn$, 
\[
\inf_{m\in\nn} \nu_m-1/n\leq \nu_{k_n}-1/n \leq \varphi(T^{k_n}(x_{k_n}))\leq \nu_{k_n}\leq \inf_{m\in\nn} \nu_m+1/n.
\] 
This sequence converges to $\inf_{m\in\nn} \nu_m$.
\end{proof}

Let $V$ be an Opt-Lyapunov function for $(\xin,T)$. To pass from $\alpha\circ \varphi \leq V$ to $\varphi\leq \inv{\alpha}{J}\alpha\circ V$, $\alpha$ must be surjective and strictly increasing as a function from $J$ to $\{V(x), x\in\xin\}\subseteq [0,1]$. From the earlier discussion, $J$ must also contain $\ivt$. Therefore, we define the following set of functions
\[
\scivt:=
\{\alpha:\rr\mapsto\rr : \exists\, I\in \IR \text{ s.t. }\alpha(I)=[0,1] \text{ and} \operatorname{conv}(\ivt\cup I)\in\ccdom(\alpha)\}
\]
\begin{proposition}
\label{uniqueI}
Let $\alpha\in\scivt$. There is only one interval $I$ such that $\alpha(I)=[0,1]$ and $I\in\ccdom(\alpha)$.
\end{proposition}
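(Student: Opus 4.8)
The plan is to single out one distinguished interval coming from the membership $\alpha\in\scivt$, and then to show that any interval meeting the two requested conditions must coincide with it, via an injectivity argument anchored on $\ivt$. First I would fix a witness of $\alpha\in\scivt$: an interval $I_0\in\IR$ with $\alpha(I_0)=[0,1]$ and such that $J_0:=\conv(\ivt\cup I_0)\in\ccdom(\alpha)$. By the definition of $\ccdom$, the map $\alpha$ is strictly increasing and continuous, hence injective, on all of $J_0$, and by construction $\ivt\subseteq J_0$ and $I_0\subseteq J_0$.

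Next I would put every candidate into a normal form. Let $I\in\ccdom(\alpha)$ with $\alpha(I)=[0,1]$, so that the restriction of $\alpha$ to $I$ is a strictly increasing continuous bijection onto the compact interval $[0,1]$. Since $0$ and $1$ lie in the image and $\alpha$ is strictly increasing on $I$, they are attained at the left and right endpoints of $I$ respectively; in particular $I$ is closed, $I=[a,b]$ with $a<b$, $\alpha(a)=0$ and $\alpha(b)=1$. The same reasoning applied to the witness gives $I_0=[a_0,b_0]$ with $\alpha(a_0)=0$ and $\alpha(b_0)=1$. Thus the claim $I=I_0$ reduces to the two scalar equalities $a=a_0$ and $b=b_0$.

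The engine of the argument is a gluing lemma for monotonicity: if $\alpha$ is strictly increasing on each of two intervals having at least one point in common, then it is strictly increasing, hence injective, on their convex hull (the two strictly increasing branches agree on the shared point, so they fit together monotonically). Granting that $I$ and $J_0$ share a point, the hull $\conv(I\cup J_0)$ then carries a single injective branch of $\alpha$ that contains both preimages $a,a_0$ of the value $0$ and both preimages $b,b_0$ of the value $1$; injectivity forces $a=a_0$ and $b=b_0$, that is $I=I_0$. Uniqueness of the inverse $\inv{\alpha}{I}$ follows immediately, since it depends only on the domain interval.

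The main obstacle, and the only place where the interval $\ivt$ genuinely enters, is to guarantee that a candidate $I$ actually meets the witness region $J_0$, so that the gluing lemma applies; a priori nothing in the hypotheses $I\in\ccdom(\alpha)$ and $\alpha(I)=[0,1]$ prevents a strictly increasing branch surjecting onto $[0,1]$ from sitting far away from $J_0$. Equivalently, the delicate point is to upgrade the weak hypothesis $I\in\ccdom(\alpha)$ to the witness condition $\conv(\ivt\cup I)\in\ccdom(\alpha)$, i.e. to show that any admissible inverting interval must traverse $\ivt$, whence $I\cap J_0\supseteq\ivt\neq\emptyset$. To establish this I would compare the strictly increasing restriction of $\alpha$ to $I$ with its injective restriction to $J_0\supseteq\ivt$, using that $\alpha(I)=[0,1]=\alpha(I_0)$ pins the same two level sets $\{0\}$ and $\{1\}$ for both branches and that $\ivt$ is precisely the convex hull of the reachable objective values on which $\alpha$ is meant to be inverted; the values of $\alpha$ along $\ivt$ then locate $\ivt$ inside $I$, securing the overlap. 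I expect this overlap step to demand the most care, since it is exactly the point at which the defining role of $\ivt$ in $\scivt$ must be exploited, rather than the mere monotonicity of $\alpha$ on $I$; once it is in hand, the gluing lemma and the normal form close the argument.
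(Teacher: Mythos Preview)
Your diagnosis is correct: everything rests on the overlap step, and the normal form and gluing lemma are routine. The trouble is that the overlap step cannot be completed under the stated hypotheses---not because your sketch is too vague, but because the proposition as written is false. Take $\alpha(x)=x$ on $(-\infty,1]$, $\alpha(x)=2-x$ on $[1,2]$, $\alpha(x)=x-2$ on $[2,\infty)$, and suppose the system data are such that $\ivt\subseteq[0,1]$. Then $I_0=[0,1]$ is a witness for $\alpha\in\scivt$, yet $I'=[2,3]$ also satisfies $I'\in\ccdom(\alpha)$ and $\alpha(I')=[0,1]$. The bare hypothesis $I\in\ccdom(\alpha)$ imposes nothing tying $I$ to $\ivt$; your proposed mechanism (``the values of $\alpha$ along $\ivt$ locate $\ivt$ inside $I$'') fails here since $\ivt\cap[2,3]=\emptyset$.

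The paper's own proof has the same gap: it passes from ``$x\in I$, $x\le\min J$'' directly to ``$\alpha(x)<\alpha(\min J)$'', which would require $\alpha$ to be strictly increasing on an interval containing both $x$ and $\min J$---precisely the unestablished overlap. What is true, and what the paper actually needs downstream, is uniqueness among \emph{witnesses}: there is a unique $I$ with $\alpha(I)=[0,1]$ and $\conv(\ivt\cup I)\in\ccdom(\alpha)$. For that corrected statement your gluing argument works immediately, since any two witness hulls $\conv(\ivt\cup I)$ and $\conv(\ivt\cup J)$ contain the nonempty set $\ivt$, hence overlap, so $\alpha$ is injective on their common convex hull and the endpoints of $I$ and $J$ must agree.
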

\begin{proof}
Let us consider $\alpha\in\scivt$ and $I$ such that $\alpha(I)=[0,1]$. As $\alpha$ is continuous and strictly increasing on $I$, $\inv{\alpha}{I}$ exists and is also continuous and strictly increasing from $[0,1]$ to $I$. Hence, $I$ is compact. Moreover, such an interval $I$ is unique. Otherwise, consider $I,J\in \ccdom(\alpha)$ such that $I\neq J$ and $\alpha(I)=\alpha(J)=[0,1]$. Suppose there exists $x\in I$ such that $x\leq \min J$.  We have $\alpha(x)<\alpha(\min J)=0$ but $x\in I$ and $\alpha(x)\geq \alpha(\min I)=0$. 
\end{proof}

We present some useful properties of functions belonging to $\scivt$ and their relation to $\nu$.
\begin{proposition}
\label{alphalimsup}
For all $\alpha\in\scivt$ such that there exists an Opt-Lyapunov function $V$ for $(\xin,T)$ satisfying $\alpha(\varphi(T^k(x)))\leq V(T^k(x))$ for all $x\in\xin$ and all $k\in\nn$, we have 
$\alpha(\limsup_{k\to +\infty} \nu_k)\leq 0$. 
\end{proposition}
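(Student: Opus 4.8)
The plan is to iterate the Opt-Lyapunov decrease condition to squeeze the sequence $\nu$ beneath an explicit sequence that converges to $\min I$, where $I$ is the interval attached to $\alpha$, and then pass to the $\limsup$ through the monotonicity of $\alpha$. So, first I would fix a witnessing Opt-Lyapunov function $V$ for $(\xin,T)$, with $V\circ T\leq\lambda V$ for some $\lambda\in(0,1)$ and $\osp{V}\in(0,1]$, together with the (unique, by Prop.~\ref{uniqueI}) interval $I\in\ccdom(\alpha)$ such that $\alpha(I)=[0,1]$ and $\conv(\ivt\cup I)\in\ccdom(\alpha)$. For $x\in\xin$ and $k\in\nn$, iterating the decrease condition gives $V(T^k(x))\leq\lambda^k V(x)\leq\lambda^k\osp{V}$, where I use that $x\in\xin$ forces $V(x)\leq\osp{V}\leq 1$ (in particular $V(x)<+\infty$, consistently with Prop.~\ref{fixlyap}). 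Combined with the hypothesis $\alpha(\varphi(T^k(x)))\leq V(T^k(x))$, this yields $\alpha(\varphi(T^k(x)))\leq\lambda^k\osp{V}$ for all $x\in\xin$ and $k\in\nn$.

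Next I would invert $\alpha$. Since $\lambda\in(0,1)$ and $\osp{V}\in(0,1]$, we have $\lambda^k\osp{V}\in[0,1]=\alpha(I)$, so $y_k:=\inv{\alpha}{I}(\lambda^k\osp{V})$ is well-defined and lies in $I$. Both $\varphi(T^k(x))\in\ivt$ (immediate from the definition of $\ivt$) and $y_k\in I$ lie in $\conv(\ivt\cup I)$, on which $\alpha$ is strictly increasing; hence $\alpha(\varphi(T^k(x)))\leq\lambda^k\osp{V}=\alpha(y_k)$ forces $\varphi(T^k(x))\leq y_k$, and taking the supremum over $x\in\xin$ gives $\nu_k\leq\inv{\alpha}{I}(\lambda^k\osp{V})$ for every $k\in\nn$.

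It then remains to pass to the limit. The interval $I$ is compact, so $\nu_k\leq\max I<+\infty$; thus $\nu\in\Lambda$ and $\limsup_{k\to+\infty}\nu_k\in\rr\cup\{-\infty\}$ by Prop.~\ref{suplimsup}. As $k\to+\infty$ we have $\lambda^k\osp{V}\to0$, and since $\inv{\alpha}{I}$ is continuous on $[0,1]$ with $\inv{\alpha}{I}(0)=\min I$ (because $\alpha(\min I)=\min\alpha(I)=\min[0,1]=0$), we get $\inv{\alpha}{I}(\lambda^k\osp{V})\to\min I$, hence $\limsup_{k\to+\infty}\nu_k\leq\min I$. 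In the relevant case where this $\limsup$ is finite, Prop.~\ref{ivtprop} places it in $\ivt\subseteq\conv(\ivt\cup I)$; since $\min I$ also lies in $\conv(\ivt\cup I)$ and $\alpha$ is strictly increasing there, applying $\alpha$ to $\limsup_{k\to+\infty}\nu_k\leq\min I$ gives $\alpha(\limsup_{k\to+\infty}\nu_k)\leq\alpha(\min I)=0$, which is the claim. I expect the only real care to be needed in the interval bookkeeping — ensuring that $\varphi(T^k(x))$, $y_k$, $\limsup_k\nu_k$ and $\min I$ all genuinely sit in the single interval $\conv(\ivt\cup I)$ on which $\alpha$ (and hence $\inv{\alpha}{I}$) is known to be continuous and strictly increasing — rather than in any substantive analytic difficulty; the degenerate possibility $\limsup_k\nu_k=-\infty$ lies outside the intended scope and can be dismissed.
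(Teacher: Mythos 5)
Your proof is correct and follows essentially the same route as the paper's: iterate $V\circ T\leq \lambda V$ to obtain $\alpha(\varphi(T^k(x)))\leq \lambda^k\,\osp{V}$ for all $x\in\xin$, then let $k\to+\infty$ using the monotonicity and continuity of $\alpha$ on $\operatorname{conv}(\ivt\cup I)$. The only cosmetic difference is that you invert $\alpha$ and bound $\nu_k$ by the explicit sequence $\inv{\alpha}{I}(\lambda^k\osp{V})\to\min I$ before applying $\alpha$ once at the end, whereas the paper commutes $\alpha$ directly with the supremum over $\xin$ and with the $\limsup$; your interval bookkeeping is, if anything, more careful than the paper's one-line version.
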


\begin{proof}
Let $x\in\xin$ and $k\in\nn$. By assumption, $\alpha(\varphi(T^k(x)))\leq V(\varphi(T^k(x)))\leq \lambda^k V(x)$. Taking the supremum over $\xin$ and from the increasing condition and the continuity of $\alpha$, $\alpha(\nu_k)\leq \lambda^k \osp{V}$. Taking the limsup, we get $\limsup_{k\mapsto +\infty} \alpha(\nu_k)=\alpha(\limsup_{k\mapsto +\infty} \nu_k)\leq 0$. 
\end{proof}

\begin{corollary}
\label{lyaptolambda}
Suppose there exist $\alpha\in\scivt$ and an Opt-Lyapunov function $V$ for $(\xin,T)$ such that $\alpha(\varphi(T^k(x)))\leq V(T^k(x))$ for all $x\in\xin$ and $k\in\nn$. 
Then: 
\begin{enumerate}
\item $\nuopt<+\infty$;
\item If $\gsls{\nu}=\emptyset$, for all $\alpha(\varphi(T^k(x)))\leq 0$ for all $x\in\xin$ and all $k\in\nn$.
\end{enumerate}
\end{corollary}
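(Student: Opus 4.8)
The plan is to prove the two items in the order stated, because item~2 invokes Proposition~\ref{posetdelta}, which is only available once we know $\nu\in\Lambda$; so item~1 must come first.

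For item~1, I would let $I\in\IR$ be the interval associated with $\alpha$, i.e.\ $\alpha(I)=[0,1]$ and $\conv(\ivt\cup I)\in\ccdom(\alpha)$; by Proposition~\ref{uniqueI} it is unique, and its proof shows $I$ is compact, so $\max I$ is finite and $\alpha(\max I)=\max\alpha(I)=1$. Now fix $x\in\xin$ and $k\in\nn$. By the first statement of Proposition~\ref{fixlyap}, $V(T^k(x))\in[0,1]$, so the hypothesis gives $\alpha(\varphi(T^k(x)))\le V(T^k(x))\le 1=\alpha(\max I)$. Since $\varphi(T^k(x))\in\ivt$ and $\max I\in I$ both lie in $\conv(\ivt\cup I)$, on which $\alpha$ is strictly increasing, the inequality $\alpha(\varphi(T^k(x)))\le\alpha(\max I)$ forces $\varphi(T^k(x))\le\max I$. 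Taking the supremum over $x\in\xin$ yields $\nu_k\le\max I$ for every $k$, hence $\sup_{k\in\nn}\nu_k\le\max I<+\infty$, i.e.\ $\nu\in\Lambda$.

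For item~2, assume $\gsls{\nu}=\emptyset$. Since $\nu\in\Lambda$, the first and third statements of Proposition~\ref{posetdelta} apply: $\gsls{\nu}=\emptyset$ forces $\gs_\nu=\emptyset$, and then $\sup_{n\in\nn}\nu_n=\limsup_{k\to+\infty}\nu_k$. This common value is finite, being $\le\max I$ by item~1 and $\ge\nu_0\in\rr$ by Assumption~\ref{mainassum}; in particular it belongs to $\ivt$ by Proposition~\ref{ivtprop}. Proposition~\ref{alphalimsup} then gives $\alpha(\limsup_{k\to+\infty}\nu_k)\le 0$. Finally, for any $x\in\xin$ and $k\in\nn$ we have $\varphi(T^k(x))\le\nu_k\le\sup_{n\in\nn}\nu_n=\limsup_{k\to+\infty}\nu_k$, and since $\varphi(T^k(x))$ and $\limsup_{k\to+\infty}\nu_k$ both lie in $\ivt\subseteq\conv(\ivt\cup I)$, where $\alpha$ is increasing, we conclude $\alpha(\varphi(T^k(x)))\le\alpha(\limsup_{k\to+\infty}\nu_k)\le 0$, which is the assertion.

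The argument is essentially bookkeeping layered on Propositions~\ref{ivtprop}, \ref{uniqueI}, \ref{alphalimsup} and \ref{posetdelta}; I do not expect a genuine obstacle. The one point needing care is to consistently track the interval $\conv(\ivt\cup I)$ on which $\alpha$ is actually strictly increasing and continuous, and to check that every quantity being compared ($\varphi(T^k(x))$, $\nu_k$, $\max I$, and $\sup_{n\in\nn}\nu_n=\limsup_{k\to+\infty}\nu_k$) genuinely lies inside that interval; the secondary subtlety is the ordering of the two items, since Proposition~\ref{posetdelta} may only be used after $\nu\in\Lambda$ is established.
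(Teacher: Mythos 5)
Your proof is correct, and for item~2 it is exactly the paper's argument: combine the first and third statements of Proposition~\ref{posetdelta} to get $\sup_{n}\nu_n=\limsup_{k\to+\infty}\nu_k$ when $\gsls{\nu}=\emptyset$, then apply the monotonicity of $\alpha$ on $\ivt$ together with Proposition~\ref{alphalimsup}.

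For item~1 you take a slightly different (and arguably cleaner) route. The paper simply asserts that $\nu\in\Lambda$ ``follows readily from Prop.~\ref{alphalimsup}'', i.e.\ from $\alpha(\limsup_{k\to+\infty}\nu_k)\leq 0$ one is meant to deduce $\limsup_{k\to+\infty}\nu_k<+\infty$ and invoke Proposition~\ref{suplimsup}; this implicitly requires knowing that the limsup is a point at which the monotonicity of $\alpha$ on $\operatorname{conv}(\ivt\cup I)$ can be exploited, which is a little delicate if one has not yet excluded $\limsup_{k\to+\infty}\nu_k=+\infty$. You instead bypass the limsup entirely: from $V(T^k(x))\in[0,1]$ (Proposition~\ref{fixlyap}) and $\alpha(\max I)=1$ with $I$ compact (Proposition~\ref{uniqueI}), strict monotonicity of $\alpha$ on $\operatorname{conv}(\ivt\cup I)$ gives $\varphi(T^k(x))\leq\max I$ pointwise, hence $\nu_k\leq\max I$ for every $k$ and $\nu\in\Lambda$ directly. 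This buys a uniform finite bound on $\sup_k\nu_k$ rather than just finiteness of the limsup, and it avoids evaluating $\alpha$ at a quantity not yet known to be real; the cost is only the extra bookkeeping you already flag, namely checking that every compared point lies in the interval where $\alpha$ is strictly increasing. Both arguments are sound; yours is the more self-contained of the two.
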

\begin{proof}
The first statement follows readily from Prop.~\ref{alphalimsup} and the fact that $\limsup_{k\to +\infty} \nu_k<+\infty$ iff $\nuopt<+\infty$. The second statement follows from Prop.~\ref{argmaxsimple} and Prop.~\ref{alphalimsup}, since we have, if $\gsls{\nu}=\emptyset$, for all $x\in\xin$ and all $k\in\nn$, $\varphi(T^k(x))\leq \sup_{k\in\nn} \nu_k=\limsup_{n\to +\infty} \nu_n$.
\end{proof}

The set $\scivt$ defines the class of functions that we must consider to obtain relations $\alpha\circ \varphi \leq V$ when $V$ is an Opt-Lyapunov function for $(\xin,T)$. Another important consideration to take into account is \red{the} usefulness of functions $h\in\secfunh(\nu)$ based on elements $\alpha$ of $\scivt$. An Opt-Lyapunov function for $(\xin,T)$ is always nonnegative and if $\alpha(\nu_k)\leq 0$ for all $k\in\nn$, \red{$\alpha\circ \varphi\leq V$ obviously holds}. \red{However, a function $\alpha$ nonpositive on the terms $\nu_k$ whereas $\gsls{\nu}$ is nonempty is not useful. To be useful}, $\alpha(\nu_k)$ must be strictly positive for some $k\in\nn$. By Prop.~\ref{alphalimsup} and Corollary~\ref{lyaptolambda}, we have automatically 
$k\in\gsls{\nu}$. We propose a special definition for the particular functions $\alpha\in\scivt$.

\begin{defi}[$(\xin,T,\varphi)$-Certificate of compatibility]
Let $g\in\PSD$ such that $\{T^{k}(x): x\in\xin, k\in\nn\}\subseteq \dom(g)$. A function $\alpha\in \scivt$ such that $\alpha(\nu_k)>0$ for some $k\in\gsls{\nu}$ and $\alpha(\varphi(T^k(x)))\leq g(T^k(x))$ for all $k\in \nn$ and all $x\in\xin$ is called a $(\xin,T,\varphi)$-{\it certificate of compatibility} for $g$.
\end{defi}


The problem is to decide which conditions on $\xin$, $T$, $\varphi$ and $g\in\PSD$ such that $\{T^k(x): x\in\xin, k\in\nn\}\subseteq \dom(g)$ ensure the existence of $(\xin,T,\varphi)$-certificate of compatibility for $g$. Therefore, we introduce the concept of $(\xin,T,\varphi)$-compatibility.

\begin{defi}[$(\xin,T,\varphi)$-compatibility]
A function $g\in\PSD$ such that $\{T^k(x): x\in\xin, k\in\nn\}\subseteq \dom(g)$ is said to be $(\xin,T,\varphi)$-compatible at $k\in\gsls{\nu}$ if and only if there exist $\varepsilon>0$ and $\eta>0$ such that for all $x\in\xin$ and for all $j\in\nn$, verifying $\varphi(T^j(x))>\nu_k-\eta$, we have $g(T^j(x))>\varepsilon$. 

We say that $g$ is $(\xin,T,\varphi)$-compatible if there exists some $k\in\gsls{\nu}$ at which $g$ is $(\xin,T,\varphi)$-compatible.
\end{defi}
\begin{example}[$\lyaples$ in~\eqref{eq:opt-lyap-leslie} is $(\xinles,L,\philes)$-compatible]
	\label{ex:lyaplescomp}
	We have to prove that there exists $k\in\gsls{\omeles}$, $\varepsilon>0$ and $\eta>0$ such that for all $x\in\xin$, for all $j\in\nn$ such that $\philes(L^jx)>\omeles_k-\eta$, we have $\lyaples(L^jx)>\varepsilon$.
	
	Recall that $\overline{\lim}_n \omeles_n=\lambda_2/c$ and thus $k\in\gsls{\omeles}$ iff $\omeles_k>\lambda_2/c$. Let $k\in\gsls{\omeles}$ and let $\eta>0$ any real such that $\omeles_k-\eta>\lambda_2/c$. Let $K\in\nn$ such that $\omeles_j\leq \lambda_2/c$ for all $j>K$. Let $j\in\nn$ and $x\in\xinles$ such that $\philes(L^j(x))> \omeles_k-\eta$. It follows that $j\in\{0,\ldots,K\}$. From the discussion in Subsection~\ref{subsubsec:rapidanal}, $\pi_2(L^j x)\leq \max\{\pi_2(x),\pi_2(L^K x)\}$. Moreover, as for all $x\in\rr^2$ and all $k\in\nn$, $\pi_2(L^k x)=(\lambda_2-\lambda_1)^{-1}(\lambda_1^k\afct(x)+\lambda_2^k\bfct(x))$, it is easy to see that  $\pi_2(L^j x)\leq \max\{u_2,\pi_2(L^K \xuu)\}$. We conclude that, for all $x\in\xin$ and all $j\in\nn$ such that $\philes(L^j x)> \omeles_k-\eta$, we have 
	\[
	\begin{array}{ll}
		\lyaples(L^j x)=\dfrac{C}{(\pi_1(L^jx))^2+(\pi_2(L^jx))^2}&=\dfrac{C}{(\pi_2(L^jx))^2((\philes(L^jx))^2+1)}\\
		&\geq \dfrac{C}{(\max\{u_2,\pi_2(L^{K} \xuu))\})^2(\omelesopt^2+1)}:=\varepsilon>0
	\end{array}
	\]
	This proves that $\lyaples$ is an Opt-Lyapunov function $(\xinles,L,\philes)$-compatible.
	\qed
\end{example}

To be $(\xin,T,\varphi)$-compatible at $k\in\gsls{\nu}$ implies that if $T^k(x)$ is $\eta$-optimal for $\varphi$ then $g(T^k(x))$ is strictly positive. We extend the strict positivity of $g$ to all orbits of $T$ starting in $\xin$ greater than $\nu_k-\eta$. 

\begin{proposition}
\label{compatiblecone}
If $g$ is $(\xin,T,\varphi)$-compatible at $k\in\gsls{\nu}$, it is $(\xin,T,\varphi)$-compatible at every $j\in\gsls{\nu}$ such that $\nu_j\geq \nu_k$.
\end{proposition}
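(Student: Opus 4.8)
The plan is to exploit the fact that the defining condition of $(\xin,T,\varphi)$-compatibility at a rank $k$ is monotone in the threshold $\nu_k$: a larger threshold only weakens the hypothesis $\varphi(T^n(x))>\nu_k-\eta$, so the very same witnesses $(\varepsilon,\eta)$ that work at $k$ will work at any $j$ with $\nu_j\geq\nu_k$. First I would unpack the hypothesis: $g$ is $(\xin,T,\varphi)$-compatible at $k\in\gsls{\nu}$ means there are $\varepsilon>0$ and $\eta>0$ such that, for every $x\in\xin$ and every $n\in\nn$ with $\varphi(T^n(x))>\nu_k-\eta$, one has $g(T^n(x))>\varepsilon$. (Recall that the standing requirement $\{T^n(x):x\in\xin,\ n\in\nn\}\subseteq\dom(g)$ is a property of $g$ alone and does not depend on the chosen rank, so it is automatically available when we switch to $j$.)

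Next, given $j\in\gsls{\nu}$ with $\nu_j\geq\nu_k$, I would simply check that the pair $(\varepsilon,\eta)$ certifies compatibility at $j$. Indeed, take any $x\in\xin$ and $n\in\nn$ with $\varphi(T^n(x))>\nu_j-\eta$. Since $\nu_j\geq\nu_k$, this gives $\varphi(T^n(x))>\nu_j-\eta\geq\nu_k-\eta$, so by the hypothesis $g(T^n(x))>\varepsilon$. As $j\in\gsls{\nu}$ by assumption, the definition of $(\xin,T,\varphi)$-compatibility at $j$ applies, and what we have just shown is exactly that definition with the same $\varepsilon$ and $\eta$. This completes the argument, and it also makes transparent the "cone" terminology in the name of the proposition: the set of ranks in $\gsls{\nu}$ at which $g$ is compatible is upward closed with respect to the ordering induced by the values $\nu_\bullet$.

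There is essentially no obstacle here; the only point requiring a moment's care is to keep the index of the iterate (call it $n$) distinct from the rank $j$ appearing in the conclusion, and to note explicitly that the membership $j\in\gsls{\nu}$ needed to even state compatibility at $j$ is part of the hypothesis rather than something to be derived. No appeal to earlier results is needed beyond the definitions themselves.
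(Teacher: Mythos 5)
Your argument is correct and is essentially identical to the paper's proof: both reuse the same witnesses $(\varepsilon,\eta)$ and observe that $\varphi(T^n(x))>\nu_j-\eta\geq\nu_k-\eta$ reduces the condition at $j$ to the one at $k$. Nothing further is needed.
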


\begin{proof}
Suppose that $g$ is $(\xin,T,\varphi)$-compatible at $k\in\nn$. Let $\varepsilon>0$ and $\eta>0$ such that for all $x\in\xin$ and for all $j\in\nn$, verifying $\varphi(T^j(x))> \nu_k-\eta$, we have $g(T^j(x))>\varepsilon$. Let $j\in\nn$ such that $\nu_j\geq \nu_k$. Let $x\in\xin$, and $l\in\nn$ such that $\varphi(T^l(x))> \nu_j-\eta$. This implies that $\varphi(T^l(x))> \nu_j-\eta\geq \nu_k-\eta$ and thus $g(T^l(x))>\varepsilon$.
\end{proof}

The existence of $(\xin,T,\varphi)$-certificate of compatibility for $g$
is actually equivalent to the $(\xin,T,\varphi)$-compatibility of $g$.
\begin{theorem}
\label{compatibilityth}
A function $g$ is $(\xin,T,\varphi)$-compatible if and only if there exists $(\xin,T,\varphi)$-certificate of compatibility for $g$.
\end{theorem}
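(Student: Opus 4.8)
The plan is to read the two conditions as the two halves of an elementary separation statement, the asymmetry between them being absorbed by the sign convention $g\ge 0$ built into $\PSD$. First I would dispatch the degenerate case: if $\gsls{\nu}=\emptyset$ then there is no index at which $g$ could be $(\xin,T,\varphi)$-compatible and no index witnessing a certificate, so both sides of the equivalence are false and there is nothing to prove. So assume $\gsls{\nu}\neq\emptyset$; by Proposition~\ref{notinlambda} this forces $\nu\in\Lambda$, hence $\nuopt=\sup_k\nu_k$ is a finite real, and by Assumption~\ref{mainassum} every $\nu_k$ is finite. I will also use that $\ivt=[\inf_k\nu_k,\nuopt]\cap\rr$ (Proposition~\ref{ivtprop}), that each value $\varphi(T^n(x))$ lies in $\ivt$, and that $\varphi(T^n(x))\le\nu_n\le\nuopt$.

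For the easy implication, ``certificate $\Rightarrow$ compatible'', I take a certificate $\alpha\in\scivt$, with $\alpha(\nu_k)>0$ for some $k\in\gsls{\nu}$ and $\alpha(\varphi(T^n(x)))\le g(T^n(x))$ for all $x\in\xin,n\in\nn$. Writing $J=\conv(\ivt\cup I)\in\ccdom(\alpha)$ for the interval from the definition of $\scivt$, $\alpha$ is continuous and strictly increasing on $J$ and $\nu_k\in\ivt\subseteq J$. Set $\varepsilon=\alpha(\nu_k)/2>0$. Continuity of $\alpha$ at $\nu_k$ yields $\eta>0$ with $\alpha(t)>\varepsilon$ for $t\in J$, $|t-\nu_k|<\eta$; together with monotonicity (so $\alpha(t)\ge\alpha(\nu_k)>\varepsilon$ for $t\in J$, $t\ge\nu_k$) this gives $\alpha(t)>\varepsilon$ for every $t\in J$ with $t>\nu_k-\eta$. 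Since all points $\varphi(T^n(x))$ lie in $\ivt\subseteq J$, the condition $\varphi(T^n(x))>\nu_k-\eta$ forces $g(T^n(x))\ge\alpha(\varphi(T^n(x)))>\varepsilon$, so $g$ is $(\xin,T,\varphi)$-compatible at $k$.

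The substance, and the step I expect to fight with, is the converse ``compatible $\Rightarrow$ certificate'': one must cook up a single $\alpha\in\scivt$ --- strictly increasing, continuous, carrying some interval onto $[0,1]$, and with $\conv(\ivt\cup I)\in\ccdom(\alpha)$ --- that is at the same time dominated by $g$ along every orbit. What unlocks this is the observation that $g\ge 0$: the inequality $\alpha(\varphi(T^n(x)))\le g(T^n(x))$ is free whenever $\alpha(\varphi(T^n(x)))\le 0$, and on the complementary set $\{\varphi(T^n(x))>\nu_k-\eta\}$ compatibility supplies $g(T^n(x))>\varepsilon$. So, fixing $k\in\gsls{\nu}$ and $\varepsilon,\eta>0$ from the compatibility hypothesis (shrinking $\varepsilon$ to ensure $\varepsilon<1$, which costs nothing), I would put $c=\nu_k-\eta$ and $b=\nuopt$, note $c<\nu_k\le b$ so $c<b$, choose $d=b+1-\varepsilon>b$, and define $\alpha:\rr\to\rr$ to be $t\mapsto t-c$ on $(-\infty,c]$, $t\mapsto\varepsilon(t-c)/(b-c)$ on $[c,b]$, $t\mapsto\varepsilon+(t-b)$ on $[b,d]$, and any continuous strictly increasing continuation beyond $d$. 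Routine checks give: $\alpha$ is continuous and strictly increasing on $(-\infty,d]$, $\alpha(c)=0$, $\alpha(b)=\varepsilon$, $\alpha(d)=1$, $0<\alpha(t)\le\varepsilon$ for $c<t\le b$, and $\alpha(\nu_k)=\varepsilon\eta/(b-c)>0$. With $I=[c,d]$ we get $\alpha(I)=[0,1]$, and since $\ivt=[\inf_k\nu_k,b]\cap\rr$ and $c<b<d$ we have $\conv(\ivt\cup I)\subseteq(-\infty,d]$, so $\conv(\ivt\cup I)\in\ccdom(\alpha)$ and $\alpha\in\scivt$. Finally, for $x\in\xin$ and $n\in\nn$: if $\varphi(T^n(x))\le c$ then $\alpha(\varphi(T^n(x)))\le 0\le g(T^n(x))$; if $\varphi(T^n(x))>c$ then $\varphi(T^n(x))\le\nu_n\le b$, hence $\alpha(\varphi(T^n(x)))\le\varepsilon<g(T^n(x))$ by compatibility at $k$. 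Together with $\alpha(\nu_k)>0$ and $k\in\gsls{\nu}$, this exhibits $\alpha$ as a $(\xin,T,\varphi)$-certificate of compatibility for $g$, and the equivalence is proved.
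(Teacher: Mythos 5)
Your proof is correct and follows essentially the same strategy as the paper's: the forward direction is the identical continuity-plus-monotonicity argument at $\nu_k$ with $\varepsilon=\alpha(\nu_k)/2$, and the converse uses the same key idea of an (essentially) affine separating function that is nonpositive below the compatibility threshold and at most $\varepsilon$ on the reachable values, exploiting $g\geq 0$. The only cosmetic differences are that the paper first reduces to $k=\bigkse_\nu$ via Proposition~\ref{compatiblecone} and takes the single affine map $s\mapsto s-\nuopt+\min\{\eta,\varepsilon\}$, whereas you stay at the original $k$ and assemble an explicit piecewise-linear $\alpha$ normalized to send $[c,d]$ onto $[0,1]$; both are valid.
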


\begin{proof}
Suppose that there exists $\alpha\in \scivt$ such that $\alpha(\nu_k)>0$ for some $k\in\gsls{\nu}$ and $\alpha(T^k(x))\leq g(T^k(x))$ for all $k\in\nn$ all $x\in \xin$. Let $k\in\gsls{\nu}$ such that $\alpha(\nu_k)>0$. From Prop.~\ref{compatiblecone}, we can suppose that $\nu_k>\inf_{n\in\nn} \nu_n$ and thus there exists $\zeta>0$ such that $(\nu_k-\zeta,\nu_k)\subset \ivt$. As $\alpha$ is continuous and strictly increasing on $\ivt$, for $\varepsilon:=\alpha(\nu_k)/2$, there exists $\eta\in (0,\zeta]$, such that $0<\nu_k-s\leq \eta$ implies that $\varepsilon<\alpha(s)<\alpha(\nu_k)+\varepsilon$. Now let us consider $j\in\nn$ and $x\in\xin$ such that $\varphi(T^j(x))>\nu_k-\eta$. We have either $\nu_k>\varphi(T^j(x))$ or $\varphi(T^j(x))\geq \nu_k$. In the first case, we have, by hypothesis, $g(T^j(x))\geq \alpha(\varphi(T^j(x)))>\varepsilon$. In the second case, as $\alpha$ is increasing and $g(T^j(x))\geq \alpha(\varphi(T^j(x)))\geq \alpha(\nu_k)>\varepsilon$.

Now, assume that $g$ is $(\xin,T,\varphi)$-compatible at some $k\in \gsls{\nu}$. Then, from Prop.\ref{compatiblecone}, $g$ is also $(\xin,T,\varphi)$-compatible at $k=\bigkse_\nu$. Let $\varepsilon>0$ and $\eta>0$ such that for all $x\in\xin$ and for all $j\in\nn$, verifying $\varphi(T^j(x))> \nuopt-\eta$, we have $g(T^j(x))>\varepsilon$. Then we can define: $\alpha : s\mapsto s-\nuopt+\min\{\eta,\varepsilon\}$. Let $x\in\xin$ and $j\in\nn$. Then either $\varphi(T^j(x))\leq \nuopt-\min\{\eta,\varepsilon\}$ or $\varphi(T^j(x))> \nuopt-\min\{\eta,\varepsilon\}$. For the first case, $\alpha(\varphi(T^j(x)))\leq 0\leq g(T^j(x)))$. For the second, $0< \alpha(\varphi(T^j(x)))\leq \min\{\eta,\varepsilon\}$. By assumption on $g$, $g(T^j(x))>\varepsilon$. Note that the set $\{(x,j)\in\xin\times \nn :\varphi(T^j(x))>\nuopt-\min\{\eta,\varepsilon\}\}$ is nonempty. Finally, as $\alpha$ is strictly increasing, for all $k\in\gsls{\nu}$ such that $\nu_k\geq \nuopt-\min\{\eta,\varepsilon\}$, we have $\alpha(\nu_k)>0$. Note that the set $\{k\in\nn : \nu_k\geq \nuopt-\min\{\eta,\varepsilon\}\}$ is nonempty as $[\bigkse_\nu,\bigks_\nu]\cap \nn$ is included in it.
\end{proof}

\begin{example}[A $(\xinles,L,\philes)$-certificate of compatibility for $\lyaples$]
	\label{ex:leslyapcertif}
	From Th~\ref{compatibilityth} and Example~\ref{ex:lyaplescomp}, there exists a $(\xinles,L,\philes)$-certificate of compatibility for $\lyaples$. We will see in Theorem~\ref{directoptlyap} that this certificate is required to construct an element $(h,\beta)\in\funset(\omeles)$ from an Opt-Lyapunov function. 
	
	Thus, we construct a certificate of compatibility for $\lyaples$ proposed in~\eqref{eq:opt-lyap-leslie}.
	From the definition of $\lyaples$, we can easily establish a link between $\lyaples$ and $\philes$. Indeed, for all $x\in(\rr_+^{*})^2$:
	\[
	\lyaples(x)=\dfrac{C}{x_1^2+x_2^2}=\dfrac{C}{x_1^2(1+(\philes(x))^{-2})}.
	\]
	The functions of the form $s\mapsto C/(M^2(1+s^{-2}))$ are strictly increasing and continuous. One way to use these functions as lower bounds of $\lyaples$ is to find an upper bound of the set $\{\pi_1(L^k x) : x\in\xinles, k\in\nn\}$. However, from the discussion in~\ref{subsubsec:rapidanal}, $\{\pi_1(L^k x) : x\in\xin, k\in\nn\}$ is unbounded. Moreover, of the form $s\mapsto C/(M^2(1+s^{-2}))$ are strictly positive, whereas a certificate of compatibility must take negative values for all $k\in\nn$ such that $\omeles_k<\overline{\lim_n} \omeles_n$. 
	
	Then, the certificate is defined as a piecewise function on $\rr$. The pieces are one interval and its complement. The interval has a lower bound that is strictly greater than the limit superior. Hence, the certificate takes the form $s\mapsto C/(M^2(1+s^{-2}))$ on the interval. The definition of the interval permits us to consider a bounded subset of $\{\pi_1(L^k x): (x,k)\in\xinles\times \nn\}$ and thus compute $M$. On the complement of the interval, our certificate is an affine form that is proved to be a lower bound of $\lyaples$.   
	
	First, from the analysis of $\afct$, we have either $\omeles_0\in\gsls{\omeles}$ or $\omeles_1\in\gsls{\omeles}$. Then, we define \[z:=\max\{\omeles_0,\omeles_1\}.\]
	We have $z>\lambda_2/c$. 
	Let $\eta>0$ be such that $z-\eta>\lambda_2/c$ (for example $\eta=0.5(z-\lambda_2/c)$). As $\lambda_2/c$ is the limit of $(\omeles_k)_k$ and $z-\eta>\lambda_2/c$, there exists $K\in\nn$ such that $\omeles_k< z-\eta$ for all $k>K$. Then, for all $j\in\nn$ such that $\omeles_j\geq z-\eta$, we have $j\leq K$. We conclude that for all $j\in\nn$ and for all $x\in \xinles$ such that $\philes(L^j x)\geq z-\eta$, we have $\pi_1(L^j x)\leq \sup_{x\in\xinles} \pi_1(L^K x)$. Then, it suffices to compute at least an upper bound of this integer $K$. We define the function $N:\afct^{-1}(\rr\backslash\{0\})\to\mathbb Z$ for all $x\in \afct^{-1}(\rr\backslash\{0\})$ as follows:
	\[
	N(x):=
		\left\lfloor\dfrac{\ln\left(\dfrac{\bfct(x)(c(z-\eta)-\lambda_2)}{|\afct(x)|(c(z-\eta)-\lambda_1)}\right)}{\ln(\left|\lambda_1/\lambda_2\right|)}\right\rfloor
	\]
	For a given $x\in \xles$, the integer $N(x)$, if nonnegative, determines an upper bound of the maximal integer $k$ such that $\philes(L^k x)\geq z-\eta$. 
	Hence, the integer $\overline{N}$ defined as follows
		\[
	\overline{N}:=\sup_{x\in \xles} N(x)=\left\{
	\begin{array}{lr}
		N(\xul) & \text{ if } \afct(\xlu)\leq 0\\
		N(\xlu) & \text{ if } \afct(\xul)\geq 0\\
		\max\{N(\xlu),N(\xul)\} & \text{ if } \afct(\xlu)> 0 \text{ and } \afct(\xul)< 0
	\end{array}		
	\right.
	\]
	is an upper bound for $K$. Moreover, it can be proved that $\overline{N}$ is nonnegative. Then, we can compute $M=\max_{x\in \xinles}\pi_1(L^{\overline{N}} x)=\pi_1(L^{\overline{N}} \xuu)$. Actually, $\overline{N}$ is an upper bound of the greatest maximizer $\bigkse_\omega$. However, we continue as the goal is to produce a $(\xinles,L,\philes)$-certificate of compatibility. We are able to define: 
	\[
	\alpha_1:s\mapsto \dfrac{C}{M^2(1+s^{-2})}.
	\]
	Now let us take any $\gamma$ greater than
	\[
	\max\left\{\max_{s\in [z-\eta,z]} \alpha_1'(s),\dfrac{\lyaples(x^*)}{\eta}\right\}
	\]
	where $x^*\in\xinles\cup L(\xinles)$ is the vector such that $z=\philes(x^*)$.
	Note that as $\alpha_1$ is strictly increasing and $\lyaples$ is strictly positive, $\gamma$ is strictly positive.
	It follows that $\alpha_1(s)\geq \gamma(s-z)+\alpha_1(z)$. Moreover, there exists $u\in [z-\eta,z)$ such that $\gamma(u-z)+\alpha_1(z)=0$. We conclude that 
	\[
	\alpha:s\mapsto \left\{
	\begin{array}{lr}
		\dfrac{C}{M(1+s^{-2})} & \text{ if } s\geq z\\
		\gamma(s-z)+\dfrac{C}{M^2(1+z^{-2})} & \text{ if } s<z
	\end{array}
	\right.
	\]
	is a $(\xles,L,\philes)$-certificate of compatibility for $\lyaples$. 
	As for all $s\in [z-\eta,z]$, we have
	\[
	\alpha_1^{'}(s)=\dfrac{2Cs^{-3}}{(M^2(1+s^{-2}))^{2}}\leq \dfrac{2C(z-\eta)^{-3}}{(M^2(1+z^{-2})},
	\]
	a candidate $\gamma$ could be  
	\[
	\max\left\{\dfrac{2C(z-\eta)^{-3}}{(M^2(1+z^{-2}))^2},\dfrac{\lyaples(x^*)}{\eta}\right\}.
	\]
	
	Applying those computations to our numerical data of Subsection~\ref{subsec:numericaldata}, we have $z=\max\{\omeles_0,\omeles_1\}=42$ and 
	\[
	\eta=0.5(42-4)=19
	\]
	We have $\bfct(\xlu)=55$, $\afct(\xlu)=95$, $\bfct(\xul)=25$ and $\afct(\xul)=-10$. Hence, 
	\[
	N(\xlu)=	\left\lfloor\dfrac{\ln\left(\dfrac{319}{665}\right)}{-\ln(2)}\right\rfloor
	=1\text{ and } N(\xul)=	\left\lfloor\dfrac{\ln\left(\dfrac{19}{10}\right)}{-\ln(2)}\right\rfloor
	=-1
	\]
	Thus, the integer $\overline{N}$ is 1. We remark that the integer $N(\xul)=-1$ which means that to have $\philes(L^k \xul)\geq 42-19=23$, $k$ has to be negative. Indeed, for all $k\in\nn$, $\philes(L^k \xul)< 23$. We could stop here as $\overline{N}$ is an upper bound of the greatest integer maximizer of \eqref{eq:runningdpcp}.  
	
	We have to compute $M=\pi_1(L\xuu)^2=240^2=57600$. We have $x^*=L\xlu=(210,5)^\intercal$. Then 
	\[
	\dfrac{\lyaples(x^*)}{\eta}=\dfrac{125}{(19(210^2+5^2))}=\dfrac{1}{6707}\text{ and } \dfrac{2C(z-\eta)^{-3}}{M(1+z^{-2})^2}= \dfrac{250\times 23^{-3}}{57600(1+42^{-2})^2}=\dfrac{21609}{60644708120}
	\]
	 and thus $\gamma=1/6707$.
		
We conclude that 
 	\[
 \alpha:s\mapsto \left\{
 \begin{array}{lr}
 	\dfrac{125}{57600(1+s^{-2})} & \text{ if } s\geq 42\\
 	\\
 	\dfrac{(s-42)}{6707}+\dfrac{49}{22592} & \text{ if } s<42
 \end{array}
 \right.
 \]
	
	\qed
\end{example}
\subsection{Direct and Converse Opt-Lyapunov Theorems}
\label{directconverselyap}

We present both direct and converse Opt-Lyapunov theorems using the $(\xin,T,\varphi)$-compatibility notion. More precisely, the direct version aims to prove that if a $(\xin,T,\varphi)$-compatible Opt-Lyapunov function exists (and such that $\newop{T}(V)>0$), then we can construct a couple $(h,\beta)\in \funset(\nu)$ that is useful for $\nu$. The converse version exhibits a $(\xin,T,\varphi)$-compatible Opt-Lyapunov function when a couple $(h,\beta)\in \funset(\nu)$ is useful for $\nu$. Moreover, we can produce a $(\xin,T,\varphi)$-certificate of compatibility for this Opt-Lyapunov function.

We note that from Prop.~\ref{fixlyap}, if $V$ is an Opt-Lyapunov for $(\xin,T)$ then $\{T^k(x): k\in\nn, x\in\xin\}\subseteq \dom(V)$. 

\begin{proposition}
\label{zeronewop}
Assume that there exists a $(\xin,T,\varphi)$-compatible Opt-Lyapunov $V$ such that $N_V(T)=0$. Then $\bigks_\nu=0$ and $\nuopt=\osp{\varphi}$.
\end{proposition}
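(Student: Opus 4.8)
The plan is to exploit $\newop{T}(V)=0$ to force $V$ to vanish along every forward orbit issued from $\xin$ past time $0$, and then to feed this into the $(\xin,T,\varphi)$-compatibility of $V$ in order to locate where the supremum of $\nu$ is attained.

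First I would record that for every $x\in\xin$ one has $V(x)\in[0,1]$ (immediate from $\osp{V}\in(0,1]$, or from the first statement of Prop.~\ref{fixlyap}), and in particular $V(x)$ is finite. Next I claim that $V(T(x))=0$ for every $x\in\xin$: if $V(x)>0$ then $V(x)\in\rr_+^*$, so Def.~\ref{extopdef} together with $\newop{T}(V)=0$ yields $V(T(x))\leq\newop{T}(V)\,V(x)=0$; and if $V(x)=0$, the Opt-Lyapunov inequality $V\circ T\leq\lambda V$ gives $V(T(x))\leq\lambda\cdot 0=0$. A one-step induction on $j$, using $V(T^{j+1}(x))=(V\circ T)(T^{j}(x))\leq\lambda V(T^{j}(x))$, then propagates this to $V(T^{j}(x))=0$ for all $j\geq 1$ and all $x\in\xin$.

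Now I would invoke the compatibility hypothesis. Since $V$ is $(\xin,T,\varphi)$-compatible there exist some $k\in\gsls{\nu}$ (so $\gsls{\nu}\neq\emptyset$) and constants $\varepsilon>0$, $\eta>0$ such that $\varphi(T^{j}(x))>\nu_k-\eta$ implies $V(T^{j}(x))>\varepsilon$ for all $x\in\xin$ and $j\in\nn$. For $j\geq 1$ we have just shown $V(T^{j}(x))=0\not>\varepsilon$, hence $\varphi(T^{j}(x))\leq\nu_k-\eta$ for every $x\in\xin$; taking the supremum over $\xin$ gives $\nu_j\leq\nu_k-\eta$ for all $j\geq 1$. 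Were $k\geq 1$, this would yield the absurd inequality $\nu_k\leq\nu_k-\eta$; therefore $k=0$, whence $\nu_j\leq\nu_0-\eta<\nu_0$ for all $j\geq 1$.

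Finally I would read off the statement: $\nu_0>\sup_{j>0}\nu_j$ means precisely $0\in\gs_\nu$, and since $\gs_\nu\subseteq\nn$ this forces $\bigks_\nu=\inf\gs_\nu=0$; moreover $\nu_0\geq\nu_j$ for every $j\in\nn$, so $\nuopt=\sup_{j\in\nn}\nu_j=\nu_0=\osp{\varphi}$. The only step that needs genuine care is forcing $k=0$: the whole argument rests on the strict margin $\eta$ in the compatibility definition being incompatible with picking an index $k$ at which the orbit values $V(T^k(\cdot))$ are already identically zero; the rest is a routine unwinding of the definitions of $\newop{T}$, of an Opt-Lyapunov function, and of $\gs_\nu$.
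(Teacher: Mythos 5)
Your proof is correct and follows essentially the same strategy as the paper: first show that $\newop{T}(V)=0$ forces $V(T^j(x))=0$ for all $j\geq 1$ and $x\in\xin$, then use compatibility to pin the distinguished index at $0$. The only difference is in the second half: the paper passes to a $(\xin,T,\varphi)$-certificate of compatibility $\alpha$ (implicitly via Theorem~\ref{compatibilityth}) and argues from $\alpha(\nu_k)\leq 0$ for $k\geq 1$ versus $\alpha(\nu_j)>0$ for some $j\in\gsls{\nu}$, whereas you work directly with the $(\varepsilon,\eta)$ definition of compatibility; your route is marginally more self-contained and yields the quantitative gap $\nu_j\leq\nu_0-\eta$ for $j\geq 1$, but both arguments reach $\bigks_\nu=0$ and $\nuopt=\osp{\varphi}$ in the same way.
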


\begin{proof}
If $N_V(T)=0$ then for all $x\in\rd$ such that $V(x)>0$, $V(T(x))=0$. This also implies that $V(T^k(x))=0$ for all $k\in\nn^*$. Let $\alpha$ be a $(\xin,T,\varphi)$-certificate of compatibility. Thus, for all $k\in\nn^*$, $\alpha(\varphi(T^k(x)))\leq V(T^k(x))=0$ and as $\alpha$ is increasing and continuous, $\alpha(\nu_k)\leq 0$ for all $k\in\nn^*$. For some $j\in\gsls{\nu}$, $\alpha(\nu_j)>0$, this leads to $j=0$ and as $\alpha$ is strictly increasing and continuous on $\ivt$ then for all $k\in\nn^*$, $\nu_0>\nu_k$ and $\bigks_\nu=0$. This also implies that $\nuopt=\osp{\varphi}$.
\end{proof}

\begin{theorem}[Direct Opt-Lyapunov Theorem]
\label{directoptlyap}
Assume that there exists a $(\xin,T,\varphi)$-compatible Opt-Lyapunov $V$ such that $N_V(T)>0$. Let $\alpha$ be a $(\xin,T,\varphi)$-certificate of compatibility for $V$. Let $J=\overline{\operatorname{\conv}}(\ivt\cup I)$ where $\alpha(I)=[0,1]$. We define the function 
\begin{equation}
\label{eq:directlyaph}
h:\red{[0,1]\ni} x\mapsto \inv{\alpha}{J}\left(x\osp{V}\right).
\end{equation}
Then $(h,N_V(T))\in\funset(\nu)$ is useful for $\nu$.
\end{theorem}

\begin{proof}
The function $\inv{\alpha}{J}$ is strictly increasing and continuous on $[0,1]$. Thus, we have, for all $x\in\xin$ and all $k\in\nn$,
\[
\varphi(T^k(x))\leq \inv{\alpha}{J}(V(T^{k}(x)).
\] 
By Def.~\ref{extopdef} and Prop.~\ref{compoop}, $V(T^k(x))\leq (N_V(T))^k V(x)\leq N_V(T)^k \osp{V}$. As $\inv{\alpha}{J}$ is strictly increasing, we conclude that
\[
\nu_k=\sup_{x\in\xin} \varphi(T^k(x))\leq \inv{\alpha}{J} \left(N_V(T)^k \osp{V}\right).
\] 
Note that $x\mapsto \inv{\alpha}{J}\left( x\osp{V} \right)$ is strictly increasing and continuous on $J\supset [0,1]$ as $\inv{\alpha}{J}$ is, and $\osp{V}$ is not zero. Moreover, by Prop.~\ref{op01} and hypothesis, $N_V(T)\in (0,1)$. As $\alpha$ is a $(\xin,T,\varphi)$-certificate of compatibility for $V$, there exists $j\in\gsls{\nu}$ such that $\alpha(\nu_j)>0$. As $\inv{\alpha}{J}$ is strictly increasing and $\alpha(I)\ni 0$, $\nu_j>\inv{\alpha}{J}(0)=\inv{\alpha}{J}\left( 0\times \osp{V} \right)=h(0)$.
\end{proof}

We establish a converse Lyapunov theorem. The construction of the Opt-Lyapunov function for $(\xin,T)$ is inspired by the one proposed by Yoshizawa~\cite{yoshi}.

\begin{theorem}[Converse Opt-Lyapunov Theorem]
\label{converseoptlyap}
Let $(h,\beta)\in\funset(\nu)$ be useful for $\nu$. Let us define 
\[
\rr\ni s\mapsto \red{\mathfrak{m}}(s):=\min\left\{h(1),\max\left\{s,h(0)\right\}\right\}. 
\]
Then, the function $V\in\PSD$ is defined as follows:
\[
\rd\ni x\mapsto V(x)=\sup_{k\in\nn}\beta^{-k} \inv{h}{[0,1]}\left(  \mathfrak{m}\left(\varphi(T^k(x))\right)\right)
\]
is an Opt-Lyapunov function for $(\xin,T)$. Moreover, the function $\hat{h}$ is defined as follows:
\[
\hat{h}:\rr\mapsto \left\{
\begin{array}{lr}
s-h(0) & \text{ if } s\leq h(0)\\
\inv{h}{[0,1]}(s) & \text{ if } s\in (h(0),h(1)]
\end{array}
\right.
\]
is a $(\xin,T,\varphi)$-certificate of compatibility for $V$ making the function $V$ $(\xin,T,\varphi)$-compatible. 
\end{theorem}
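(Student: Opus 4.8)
The plan is to verify, in order, the three conditions of Definition~\ref{lyapdef} for $V$, and then the three requirements in the definition of a $(\xin,T,\varphi)$-certificate of compatibility for $\hat h$. Two elementary observations will be used throughout. First, $\omega(s)\in[h(0),h(1)]=h([0,1])$ for every $s\in\rr$, so $\inv{h}{[0,1]}(\omega(s))$ is a well-defined element of $[0,1]$; this already shows $V\colon\rd\to[0,+\infty]$. Second, for every $x\in\xin$ and $k\in\nn$ one has $\varphi(T^k(x))\le\nu_k\le h(\beta^k)\le h(1)$, because $(h,\beta)\in\funset(\nu)$ and $\beta^k\in(0,1]$; in particular the upper clamp in $\omega$ is never active along orbits issued from $\xin$.

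First I would prove $\osp{V}\in(0,1]$. For the bound $\osp{V}\le1$, fix $x\in\xin$ and bound each term $\beta^{-k}\inv{h}{[0,1]}(\omega(\varphi(T^k(x))))$ by $1$: if $\varphi(T^k(x))\le h(0)$ the term equals $\beta^{-k}\inv{h}{[0,1]}(h(0))=0$, and otherwise $\varphi(T^k(x))\in(h(0),h(1)]$, where $\omega$ is the identity, so monotonicity of $\inv{h}{[0,1]}$ together with $\varphi(T^k(x))\le h(\beta^k)$ gives $\inv{h}{[0,1]}(\varphi(T^k(x)))\le\beta^k$. For $\osp{V}>0$ I would use that $(h,\beta)$ is useful for $\nu$: choosing $k_0\in\res(\nu,h)$, so $\nu_{k_0}>h(0)$, there is $x\in\xin$ with $\varphi(T^{k_0}(x))>h(0)$, and then the $k_0$-th term of $V(x)$ is strictly positive. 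This also yields $V\in\PSD$. The decrease condition is the shortest step: after the re-indexing $j=k+1$, $V(T(x))=\sup_{k\in\nn}\beta^{-k}\inv{h}{[0,1]}(\omega(\varphi(T^{k+1}(x))))=\beta\sup_{j\ge1}\beta^{-j}\inv{h}{[0,1]}(\omega(\varphi(T^{j}(x))))\le\beta\,V(x)$, the last inequality because the supremum over $j\ge1$ ranges over a subset of the indices defining $V(x)$ and all terms are nonnegative; so $V\circ T\le\beta V$ with $\lambda=\beta\in(0,1)$. Hence $V$ is an Opt-Lyapunov function for $(\xin,T)$, and Prop.~\ref{fixlyap} gives $\{T^k(x):x\in\xin,\ k\in\nn\}\subseteq\dom(V)$, so $V$ is an admissible target for a certificate.

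For the ``moreover'' part I would check the three clauses for $\hat h$. Since the formula in the statement defines $\hat h$ only on $(-\infty,h(1)]$, I first extend it by $\hat h(s):=s-h(1)+1$ for $s>h(1)$; as $\inv{h}{[0,1]}(h(1))=1$ and $\inv{h}{[0,1]}(h(0))=0$ the two junctions are continuous, so $\hat h$ is continuous and strictly increasing on all of $\rr$, whence every interval lies in $\ccdom(\hat h)$. Taking $I:=[h(0),h(1)]$ one gets $\hat h(I)=[0,1]$ and $\conv(\ivt\cup I)\in\ccdom(\hat h)$, so $\hat h\in\scivt$. For the positivity clause the same $k_0\in\res(\nu,h)$ works: $\res(\nu,h)\subseteq\gsls{\nu}$ by the first statement of Prop.~\ref{hzero}, and $\nu_{k_0}\in(h(0),h(1)]$ gives $\hat h(\nu_{k_0})=\inv{h}{[0,1]}(\nu_{k_0})>0$. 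For the domination clause $\hat h(\varphi(T^k(x)))\le V(T^k(x))$ I keep only the $j=0$ term of $V(T^k(x))$, reducing to the pointwise inequality $\hat h(s)\le\inv{h}{[0,1]}(\omega(s))$ at $s=\varphi(T^k(x))$; since $s\le h(1)$ only the cases $s\le h(0)$ (left side $=s-h(0)\le0=$ right side) and $s\in(h(0),h(1)]$ (both sides equal $\inv{h}{[0,1]}(s)$) occur, so the inequality holds. Thus $\hat h$ is a $(\xin,T,\varphi)$-certificate of compatibility for $V$, and Theorem~\ref{compatibilityth} then gives that $V$ is $(\xin,T,\varphi)$-compatible.

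I do not expect a deep obstacle here; the only point that needs genuine care is the second running observation — that along orbits issued from $\xin$ one always has $\varphi(T^k(x))\le h(1)$, so the upper clamp in $\omega$ is inactive — since it is simultaneously what keeps $\osp{V}\le1$ and what forces the $j=0$ term of $V$ to dominate $\hat h\circ\varphi$. A minor bookkeeping nuisance is the tacit extension of $\hat h$ past $h(1)$ required to view it as an element of $\scivt$; those extra values never enter the estimates that matter.
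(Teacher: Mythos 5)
Your proof is correct and follows essentially the same route as the paper's: the same term-by-term bound $\beta^{-k}\inv{h}{[0,1]}(\omega(\varphi(T^k(x))))\le 1$ on $\xin$, the same use of a $k_0\in\res(\nu,h)$ for strict positivity, the same re-indexing for the decrease, and the same reduction of the domination clause to the $j=0$ term of $V$. Your explicit extension of $\hat h$ past $h(1)$ is a harmless tidying of a point the paper leaves implicit (it only needs $\hat h$ on $\overline{\conv}(\ivt\cup[h(0),h(1)])\subseteq(-\infty,h(1)]$), so there is no substantive difference.
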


\begin{proof}
First, we prove that the function $V$ is an Opt-Lyapunov function for $(\xin,T)$.

Let us prove that $V$ takes its values in $[0,+\infty]$. Let $x\in\rd$. As we have for all $s\in\rr$ $h(1)\geq  \mathfrak{m}(s)\geq h(0)$, we have, as $\inv{h}{[0,1]}$ is increasing, $1\geq \inv{h}{[0,1]}( \mathfrak{m}(\varphi(T^k(x))))\geq 0$ for all $k\in\nn$. We conclude, as $\beta\in(0,1)$ that $V(x)\geq 0$ and $V(x)=+\infty$ for all $x\in\rd$ such that $\varphi(T^k(x))\geq h(1)$ for all $k$ sufficiently large. 

By definition of $\funset(\nu)$, we have for all $x\in\xin$ and all $n\in\nn$, $\varphi(T^n(x))\leq h(\beta^n)<h(1)$ and so 
\[
V(x)\leq \sup_{k\in\nn} \beta^{-k} \inv{h}{[0,1]}(h(\beta^k))=\sup_{k\in\nn} \beta^{-k} \beta^k=1.
\] 
This implies that $\osp{V}\in [0,1]$. By assumption, $(h,\beta)\in\funset(\nu)$ and $\res(\nu,h)$ is nonempty. Let us take $k\in\nn$ such that $h(1)> \nu_k>h(0)$. Therefore, there exists $\varepsilon>0$ such that $\nu_k-\varepsilon>h(0)$. For this $\varepsilon$, there exists $x\in\xin$ such that $\nu_k\geq \varphi(T^k(x))\geq \nu_k-\varepsilon$. Finally, for this $x$, 
\[
V(x)\geq \beta^{-k} \inv{h}{[0,1]}(\varphi(T^k(x))>\inv{h}{[0,1]}(h(0))=0
\]
 as $\inv{h}{[0,1]}$ strictly increases. We conclude that $\osp{V}\in (0,1]$.
 Finally, we must prove that $V\circ T \leq \beta V$. Let $y\in\rd$. \red{If $V(y)=+\infty$, there is nothing to prove. Assume now $V(y)< +\infty$.} We have:
\[
\begin{array}{rl}
V(T(y))
=&\displaystyle{\sup_{k\in\nn} \beta^{-k} \inv{h}{[0,1]}\left( \mathfrak{m}(\varphi(T^{k+1}(y)))\right)}\\
=&\displaystyle{\sup_{j\in\nn^*} \beta^{-j+1} \inv{h}{[0,1]}\left( \mathfrak{m}(\varphi(T^{j}(y)))\right)}\\
=&\displaystyle{\beta \sup_{j\in\nn^*} \beta^{-j} \inv{h}{[0,1]}\left( \mathfrak{m}(\varphi(T^{j}(y)))\right)}\\
\leq &\displaystyle{\beta \sup_{j\in\nn} \beta^{-j} \inv{h}{[0,1]}\left( \mathfrak{m}(\varphi(T^{j}(y)))\right)=\beta V(y)}
\end{array}
\]
We conclude that $V$ is an Opt-Lyapunov function for $(\xin,T)$.

Now we prove that $\hat{h}$ is a $(\xin,T,\varphi)$-certificate of compatibility for $V$. Clearly, the function $\hat{h}$ is continuous and strictly increasing on $(-\infty,h(0))$ and $(h(0),h(1)]$. It is also continuous at $s=h(0)$ as $\lim_{s\to h(0)^+} \inv{h}{[0,1]}(s)=0$ and $s\mapsto s-h(0)$ tends to $0$ as $s$ tends to $h(0)$. Moreover for all $s< h(0)$, $\hat{h}(s)< 0\leq \hat{h}(t)$ for all $t\geq h(0)$. This proves that $\hat{h}$ is continuous and strictly increasing on $(-\infty,h(1)]$. Now, as $h\in\secfunh(\nu)$, $\ivt\subseteq (-\infty,h(1))$. Moreover, taking $I=[h(0),h(1)]$, we have $\hat{h}(I)=[0,1]$ and $\overline{\operatorname{\conv}}(\ivt\cup I)\in\ccdom(\hat{h})$. We conclude that $\hat{h}\in\scivt$. Now, we prove that $\hat{h}(\varphi(T^k(x)))\leq V(T^k(x))$ for all $x\in\xin$ and all $k\in\nn$. Therefore, let $x\in\xin$ and $k\in\nn$. Note that as $(h,\beta)\in \funset(\nu)$, we have $\varphi(T^k(x))\leq h(1)$. We have 
{\everymath{\displaystyle}
\[
\begin{array}{l}
V(T^k(x))=\\
\sup_{n\in\nn} \beta^{-n} \inv{h}{[0,1]}\left(\max\left\{\varphi(T^{k+n}(x)),h(0)\right\}\right)\geq \inv{h}{[0,1]}\left(\max\left\{\varphi(T^{k}(x)),h(0)\right\}\right)\geq \inv{h}{[0,1]}\left(\varphi(T^{k}(x))\right)
\end{array}
\]
}
 if $\varphi(T^{k}(x))\geq h(0)$. 
If $\varphi(T^k(x))<h(0)$, then $V(T^k(x))\geq \inv{h}{[0,1]}(h(0))=0$ and $\hat{h}(\varphi(T^k(x)))\leq 0$. We conclude that $\hat{h}(\varphi(T^k(x)))\leq V(T^k(x))$ for all $x\in\xin$ and all $k\in\nn$. As $\res(\nu,h)\neq \emptyset$, there exists $k\in\nn$, such that $h(1)>\nu_k>h(0)$. As $\inv{h}{[0,1]}$ is strictly increasing on $[h(0),h(1)]$, we have $\hat{h}(\nu_k)=\inv{h}{[0,1]}(\nu_k)>0$. This ends the proof that $\hat{h}$ is a $(\xin,T,\varphi)$-certificate of compatibility of $V$.
\end{proof}

\begin{example}
	As we choose to take $\gamma$ equals 
	\[
	\max\left\{\dfrac{2(\omega-\eta)^{-3}}{(M(1+\omega^{-2})},\dfrac{V(x^*)}{\eta}\right\},
	\]
	the function
	 \[
	\alpha:s\mapsto \left\{
	\begin{array}{lr}
		\dfrac{C}{M{}(1+s^{-2})} & \text{ if } s\geq z\\
		\gamma(s-z)+\dfrac{C}{M^2(1+z^{-2})} & \text{ if } s<z
	\end{array}
	\right.
	\]
	is a $(\xles,L,\philes)$-certificate of compatibility for $\lyaples$. 
	Finally, $(h,\beta)\in\funset(\omeles)$ where:
	\[
	h=\alpha^{-1} \text{ and } \beta=\dfrac{1}{\min\{a^2+c^2,b^2\}}.
	\]
	This leads to the formula for all $k\in\nn$ such that $\omeles_k>\alpha^{-1}(0)$
	\[
	\dfrac{\ln(\alpha(\omeles_k)/\osples)}{-\ln(\min\{a^2+c^2,b^2\})}.
	\]
	
	Applying our computations to the numerical data of Subsection~\ref{subsec:numericaldata} leads to
	$\alpha^{-1}(0)=\dfrac{1757}{64}>8=\omeles_0$. Then, we cannot use $\alpha$ at $\omeles_0$. Finally,
	\[
	\dfrac{\ln(\alpha(\omeles_1)/\osples)}{-\ln(\min\{a^2+c^2,b^2\})}=\dfrac{\ln\left(\dfrac{49}{22592}\right)}{\ln\left(\dfrac{4}{5}\right)}=27.49
	\] 
	This means that from the certificate of compatibility, we assume that the maximal term of $\omeles$ is one of the 28th first terms. This result is sound but not optimal. Indeed, the maximal term is $\omeles_1$.
	
	We see that the Opt-Lyapunov approach is less precise than $\klgen$-$\fini{\xles}$ approach. For the $\klgen$-$\fini{\xles}$ approach, we have to construct an upper bound of $\philes(L^k x)$. This bas been done directly from the function $H$ that characterizes $\philes(L^k x)$. This seems to explain why the upper bound is tight and thus the function $h\in\funset(\omeles)$ obtained provides better results. Opt-Lyapunov function $\lyaples$ defined in Eq.~\eqref{eq:opt-lyap-leslie} does not depend on $\philes$. The function depends only on the dynamics $L$ and initial conditions set. The link between $\lyaples$ and $\philes$ is established in Example~\ref{ex:lyaplescomp} and becomes explicit in Example~\ref{ex:leslyapcertif}. More generally, a certificate of compatibility is the function that relates a Lyapunov function to the orbits $\varphi(T^k x))$. 
	\qed
\end{example}

\section{A Technical Summary}
\label{summarysec}
\red{
In this paper, we have proposed a general method to solve optimization problems of the form~\eqref{pcpprob}. More precisely, focusing on the integer component of the optimal solution, it is necessary to compute an index $k\in\nn$ for which the term of the sequence $\nu$ (defined in~\eqref{nudef}) is maximal. The method is based on a function $\Fnu$ defined in~\cite{adje13052025} and quickly recalled in Eq.~\eqref{eq:fnu}. This function $\Fnu$ requires a pair $(h,\beta)$ where $h$ is a strictly increasing continuous function on $[0,1]$ and $\beta\in (0,1)$ such that $\nu_k\leq h(\beta^k)$ for all $k\in\nn$. A key notion is the \emph{usefulness} of $(h,\beta)$, that is, there exists some $k\in\nn$ such that $\nu_k>h(0)$. Without this condition, an upper bound of the greatest index maximizer of $\nu$ cannot be computed from $(h,\beta)$. However, the existence theorem~\cite[Th.2]{adje13052025} (partially recalled in Th~\ref{mainexistence}) is not totally constructive and computing $(h,\beta)\in\funset(\nu)$ has to follow an alternative methodology. We propose alternatives based on classical tools from the stability theory of difference equations. In~\cite{adje2025kllyap}, we compute $(h,\beta)\in\funset(\nu)$ from classical $\klcls$ and Lyapunov functions. However, this approach suffers from a lack of converse theorems. Indeed, some problems of the form~\eqref{pcpprob} (such as Problem~\eqref{eq:runningdpcp}) cannot be solved using these classes of functions. 
}

The inequality $\varphi(T^k(x))\leq h(\beta^k)$ for all $x\in\xin$ and for all $k\in\nn$ can be viewed as a kind of $\mathcal{KL}$ inequality that appears in asymptotic stability studies. Indeed, $h$ is strictly increasing and continuous on $[0,1]$ and $\beta\in (0,1)$ making the sequence $(h(\beta^k))_k$ strictly decreasing and convergent to $h(0)$. From this observation, we propose a more general $\klcls$ class of functions called $\klgen$ where the positivity with respect to the state variable and the continuity assumptions are removed (compared with the $\klcls$ class). We have proved (Th. ~\ref{htoklbounds}) that a $\klgen$-$\fin$ upper bound for $(\xin,T,\varphi)$ exists if and only if $\nuopt$ is finite: Then, we extend Sontag's result to the $\klgen$ class (Lemma~\ref{dectoregular} and Lemma~\ref{alasontag}) to construct $h\in\fset$, useful for $\nu$, from a $\klgen$ function and such that $(h,e^{-1})\in\funset(\nu)$ (Th.~\ref{KLversh}). One of the advantages of the extension of Sontag’s result is that the scalar part of the elements of $\funset(\nu)$ can be fixed, and it remains to construct $h$ from the $\klgen$ certificate. In practice, to decide whether a function $(\gamma,\fin)$ is a $\klgen$-$\fin$ upper bound of $\varphi(T^k(x))$ seems difficult to make. 

One constructive alternative to $\klcls$ certificates consists in Lyapunov functions.    
Classical Lyapunov functions satisfy pure functional inequations and are not defined from the orbits of the dynamical system. One of the most interesting theoretical results for Lyapunov functions is the necessity of their existence for stability (namely, the converse Lyapunov theorems). For our specific problem, we propose a new type of Lyapunov function called the Opt-Lyapunov function, for which we obtain both direct (Th. ~\ref{directoptlyap}) and the converse theorem (Th~\ref{converseoptlyap}). The main difference between classical and Opt-Lyapunov functions is that Opt-Lyapunov functions are nonnegative but not definite (many zeros) and can take infinite values (outside reachable values taken by the dynamical system). The fact that Opt-Lyapunov functions are not definite (and not continuous) renders classical comparison functions impossible to use. In the classical case, comparison functions serve as a link between our objective function $\varphi$ and the classical Lyapunov function. This link (and comparison functions) is replaced by the $(\xin,T,\varphi)$-compatibility notion (Th.~\ref{compatibilityth}). 
Theorem~\ref{summaryth} summarizes the equivalences proved in this study with respect to the peak computation problem.
\begin{theorem}[Summary]
	\label{summaryth}
	The following statements are equivalent.
	\begin{enumerate}
		\item The set $\gsls{\nu}$ is nonempty;
		\item There $(h,\beta)\in\funset(\nu)$ for which $\res(\nu,h)$ is nonempty;
		\item There exists $(h,\beta)\in\funset(\nu)$ such that $\Fnu(\bigks_\nu,h,\beta)$ is \red{well-defined} and finite and $\nuopt=\max_{k\in\nn} \{\nu_k : k=0,\ldots, \red{\Fnu(\bigks_\nu,h,\beta)}\}$;
		\item There exists $\theta\in\fin$ and $\gamma\in\klgen$ such that $\varphi(T^k(x))\leq \gamma(\theta(x),k)$ for all $x\in\xin$ and all $k\in\nn$ and $\inf_{t\geq 0} \gamma(\osp{\theta},t)<\nu_n$ for some $n\in\nn$; 
		\item There exists an Opt-Lyapunov $(\xin,T,\varphi)$ compatible function.
	\end{enumerate}
\end{theorem}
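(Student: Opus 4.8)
The plan is to treat this as an assembly of equivalences already established in Sections~\ref{generalities}--\ref{newlyapunov}, using statement~(2) --- the existence of a pair $(h,\beta)\in\funset(\nu)$ useful for $\nu$ --- as the hub, since every earlier constructive result either produces or consumes such a pair. Concretely I would prove $(1)\Leftrightarrow(2)$, $(2)\Leftrightarrow(3)$, $(2)\Leftrightarrow(4)$ and $(2)\Leftrightarrow(5)$.

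First I would dispose of the case $\nu\notin\Lambda$: Prop.~\ref{notinlambda} then gives $\gsls{\nu}=\emptyset$; Prop.~\ref{nonemptyfun} gives $\funset(\nu)=\emptyset$, so~(2) and~(3) fail; Th.~\ref{htoklbounds} shows no $\klgen$-$\fin$ upper bound exists, so~(4) fails; and Corollary~\ref{lyaptolambda}, applied to the certificate of compatibility furnished by Th.~\ref{compatibilityth}, shows that a $(\xin,T,\varphi)$-compatible Opt-Lyapunov function would force $\nu\in\Lambda$, so~(5) fails. All five statements being false, the equivalence is trivial. Hence I may assume $\nu\in\Lambda$ (which holds in particular when Assumption~\ref{mainassum} is supplemented by boundedness from above of the $\nu_k$).

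For $\nu\in\Lambda$: $(1)\Leftrightarrow(2)$ is exactly Prop.~\ref{emptybound} with $u=\nu$. For $(2)\Rightarrow(3)$ I would take a useful $(h,\beta)$, use Prop.~\ref{posetdelta} to get $\gs_\nu\neq\emptyset$ (so $\bigks_\nu\in\nn$), use Prop.~\ref{argmax} to identify $\bigks_\nu=\max\agmx(\nu)$ --- whence $\nuopt=\nu_{\bigks_\nu}=\max\{\nu_k:k=0,\ldots,\bigks_\nu\}$ --- and read off finiteness of $\Fnu(\bigks_\nu,h,\beta)$ from Prop.~\ref{inter} together with the first item of Prop.~\ref{mainprop}. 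The implication $(3)\Rightarrow(1)$ is immediate: finiteness of $\Fnu(\bigks_\nu,h,\beta)$ forces $\bigks_\nu<+\infty$ by the definition~\eqref{mainformula} of $\Fnu$, hence $\gs_\nu\neq\emptyset$ and then $\gsls{\nu}\neq\emptyset$ by Prop.~\ref{posetdelta}. For $(2)\Leftrightarrow(4)$ I would invoke Th.~\ref{htoklbounds} in one direction (its construction gives $\inf_{t\ge0}\gamma(\osp{\theta},t)=h(0)<\nu_k$ for $k\in\res(\nu,h)$) and Th.~\ref{KLversh} in the other, noting that the hypothesis of~(4) already places the witness index in $\gsls{\nu}$, since a $\klgen$-$\fin$ bound forces $\limsup_k\nu_k\le\inf_{t\ge0}\gamma(\osp{\theta},t)$, so that upper bound is useful in the sense Th.~\ref{KLversh} requires. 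Finally, $(2)\Rightarrow(5)$ is the Converse Opt-Lyapunov Theorem~\ref{converseoptlyap}, whose construction yields at once an Opt-Lyapunov function $V$ and a $(\xin,T,\varphi)$-certificate of compatibility $\hat h$ for it, so $V$ is $(\xin,T,\varphi)$-compatible by Th.~\ref{compatibilityth}; and for $(5)\Rightarrow(1)$ I would split on $\newop{T}(V)$: if $\newop{T}(V)=0$, then Prop.~\ref{zeronewop} gives $\bigks_\nu=0<+\infty$, hence $\gsls{\nu}\neq\emptyset$ by Prop.~\ref{posetdelta}; if $\newop{T}(V)>0$, then Th.~\ref{directoptlyap} produces a useful pair in $\funset(\nu)$, i.e.\ statement~(2), hence~(1).

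I do not expect a real obstacle, since every ingredient is already available; the two points requiring care are purely bookkeeping --- checking that the ad hoc ``useful'' clauses in statements~(3) and~(4) match the definitions used in the cited theorems, and handling the case split on $\newop{T}(V)$ in $(5)\Rightarrow(1)$, where the degenerate branch $\newop{T}(V)=0$ must be routed through statement~(1) via Prop.~\ref{zeronewop} rather than through~(2), because Th.~\ref{directoptlyap} is stated only for $\newop{T}(V)>0$.
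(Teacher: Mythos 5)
Your proposal is correct and follows essentially the same route as the paper, which proves the theorem precisely by assembling the constructions of Th.~\ref{summary3}, Th.~\ref{htoklbounds}, Th.~\ref{KLversh}, Th.~\ref{directoptlyap}, Th.~\ref{converseoptlyap} and Th.~\ref{compatibilityth} around $\funset(\nu)$ as the hub (this is exactly the content of Figure~\ref{figsummary}). Your extra bookkeeping --- disposing of the case $\nu\notin\Lambda$, verifying via $\limsup_k\nu_k\leq\inf_{t\geq 0}\gamma(\osp{\theta},t)$ that the witness index in statement (4) lies in $\gsls{\nu}$, and routing the degenerate branch $\newop{T}(V)=0$ through Prop.~\ref{zeronewop} --- is sound and only makes explicit what the paper leaves implicit.
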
   

The proof of Theorem is obtained by sketching the functions constructed in the proofs of the earlier theorems. Figure~\ref{figsummary} provides a graphical view of the different results involved in proving Th. ~\ref{summaryth}. In Figure~\ref{figsummary}, the notation $\mathcal{OL}_{\rm comp}$ denotes the set of $(\xin,T,\varphi)$-compatible Opt-Lyapunov functions. 
\begin{figure}[h]
	\fbox{
		\begin{minipage}{0.97\textwidth}
			\begin{center}
				\begin{tikzpicture}
					\draw (0,0) node[above]{$\funset(\nu)$};
					\draw (-4,-2) node[below]{$\klgen\times \fin$};
					\draw (4,-2) node[below]{$\mathcal{OL}_{\rm comp}\times \scivt$};
					\draw (4.6,0) node[above]{$\mathcal{OL}_{\rm comp}$};
					\draw (0,-2.5) node[font=\Large,below]{$\Fnu$};
					\draw[very thick,->] (-0.4,0) -- (-4.4,-2);
					\draw[very thick,->] (-4.1,-2) -- (-0.1,0);
					\draw[very thick,->] (0.4,0) -- (4.4,-2);
					\draw[very thick,<-] (0.1,0) -- (4.1,-2);
					\draw[very thick,->] (0,-2) -- (0,-2.5);
					\draw[very thick] (0,0) -- (0,-1.5);
					\draw[very thick,dashed,->] (4.6,0) -- (4.6,-1.9); 
					\draw (0,-1.5) node[below]{Th.~\ref{mainexistence}};
					\draw (-2,-1.1) node[right]{Th.~\ref{KLversh}};
					\draw (-2.6,-1.4) node[right]{\textcircled{\small 1}};
					\draw (-2,-0.6) node[left]{Th.~\ref{htoklbounds}};
					\draw (-2,-0.3) node[right]{\textcircled{\small 2}};
					\draw (2,-1.1) node[left]{Th.~\ref{directoptlyap}};
					\draw (1.9,-1.4) node[right]{\textcircled{\small 3}};
					\draw (2,-0.6) node[right]{Th.~\ref{converseoptlyap}};
					\draw (1.4,-0.3) node[right]{\textcircled{\small 4}};
					\draw (4.6,-0.6) node[right]{Th.~\ref{compatibilityth}};
					\draw (4.7,-1.1) node[right]{\textcircled{\small 3}};
				\end{tikzpicture}
			\end{center}
		\end{minipage}
	}
	\caption{The scheme showing the mappings between $\klgen$, $\mathcal{OL}_{\rm comp}$ and $\funset(\nu)$}
	\label{figsummary}
\end{figure}

As $V\in\mathcal{OL}_{\rm comp}$ is $(\xin,T,\varphi)$-compatible, we associate with them a $(\xin,T,\varphi)$-certificate of compatibility. For such a function $V$, we denote by $\alpha_V$ the chosen $(\xin,T,\varphi)$-certificate of compatibility.

We now clarify the circled numbers in Figure~\ref{figsummary}.
\begin{itemize} 
	\item The mapping \textcircled{\small 1} constructs from an element of $\funset(\nu)$ a $\klgen$-$\fin$ upper bound for $(\xin,T,\varphi)$:
	\[
	\begin{array}{ccc}
		\funset(\nu)&\longrightarrow &\klgen\times \fin \\
		(h,\beta)&\longmapsto& \left(\rr\times \rr_+\ni (s,t)\mapsto \displaystyle{\min\{s,h(\beta^t)\}},\displaystyle{\rd\ni x\mapsto \sup_{k\in\nn} \varphi(T^k(x))}\right)
	\end{array}
	\]
	\item The mapping \textcircled{\small 2} produces an element of $\funset(\nu)$ from a $\klgen$-$\fin$ upper bound for $(\xin,T,\varphi)$:
	\[
	\begin{array}{ccc}
		\klgen\times \fin&\longrightarrow &\funset(\nu)\\ 
		(\gamma,\theta)&\longmapsto &(h,e^{-1})
	\end{array}
	\]
	The function $h$ cannot be explicitly formulated and is constructed following the proofs of Lemmas ~\ref{dectoregular} and ~\ref{alasontag}.

	
	\item The mapping \textcircled{\small 3} asks more explanations as the symbol \textcircled{\small 3} appears twice. The mapping \textcircled{\small 3} associates an Opt-Lyapunov $(\xin,T,\varphi)$ compatible with an element of $\funset(\nu)$ but needs an intermediate argument, which is a $(\xin,T,\varphi)$ certificate of compatibility for the Opt-Lyapunov function: 
	\[
	\begin{array}{ccccc}
		\mathcal{OL}_{\rm comp}&\longrightarrow &\mathcal{OL}_{\rm comp}\times \scivt & \longrightarrow & \funset(\nu)\\
		V &\longmapsto & (V,\alpha_V)& \longmapsto &(h_V,\newop{T}(V))
	\end{array}
	\]
	where $h_V$ is defined as 
	\[
	[0,1]\ni s\mapsto h_V(s)=\inv{\alpha_V}{J}(s\osp{V})
	\]
	where $J=\overline{\operatorname{\conv}}(\ivt\cup I)\in \ccdom(\alpha_V)$ where $\alpha_V(I)=[0,1]$.
	
	\item The mapping \textcircled{\small 4} links an element $\funset(\nu)$ with an Opt-Lyapunov $(\xin,T,\varphi)$-compatible function and an associated $(\xin,T,\varphi)$-certificate of compatibility:
	\[
	\begin{array}{ccc}
		\funset(\nu)&\longrightarrow& \mathcal{OL}_{\rm comp}\times \scivt\\
		(h,\beta)&\longmapsto &(V,\alpha_V)
	\end{array}
	\]
	where:
	\begin{itemize}
		\item[$\bullet$] $\displaystyle{\rd\ni x\mapsto V(x)=\sup_{k\in\nn} \beta^{-k} \inv{h}{[0,1]}\left(\omega(\varphi(T^k(x)))\right)}$;
		\item[$\bullet$] $\displaystyle{\rr\ni s\mapsto \omega(s)=\min\{h(1),\max\{s,h(0)\}\}}$;
		\item[$\bullet$]$(-\infty,h(1)]\ni s\mapsto \alpha_V(s)=\left\{\begin{array}{lr} s-h(0) & \text{ if } s\leq h(0)\\ \inv{h}{[0,1]}(s) & \text{ if } s\in [h(0),h(1)]\end{array}\right.$
	\end{itemize}
\end{itemize}
\bibliographystyle{alpha} 
\bibliography{supremumbib}

\newpage

\appendix \section{}
\subsection{Additional proofs for Opt-Lyapunov definition}

We provide details about the proofs of Lemmas ~\ref{poslemma} and ~\ref{lemmafact}.

\newcounter{lemapp}
\counterwithin{lemapp}{section}
\setcounter{lemapp}{2}
\newtheorem{lemmaAppendix}[lemapp]{Lemma}
\begin{lemmaAppendix}
	Let $\gamma$ be strictly greater than 1. Let $f\in\kclsi$ verifying for all $x>0$, $f(x)>x$. Then, there exists a function $g\in\kcls$ which satisfies $g(f(x))=\gamma g(x)$ for all $x\in\rr_+$.
\end{lemmaAppendix}

\begin{proof}
	As we have $f(x)>x$ for all $x>0$, we have $f(1)>1$ and thus the interval $[1,f(1))$ is nonempty. We define $h:[1,f(1))$ as continuous and strictly increasing, with strictly positive values such that $h(1)=1$ and $\lim_{x\mapsto f(1)^{-}} h(x)=\gamma$. For example, we could take $h$ to be the affine map linking $(1,1)$ to $(f(1),\gamma)$. Now, as the sequence $(f^k(1))_k$ is strictly increasing and unbounded (if $M=\sup_{k\in\nn} f^k(1)>0$ is finite, then as $f$ is increasing and continuous, $f(M)=M$ but $M<f(M)$ by the assumption on $f$). As $f$ is invertible on $\rr_+$ and we have $f^{-1}(x)<x$ for all $x>0$, the sequence $(f^{-k}(1))_k$ is strictly decreasing and positive and converges to 0 as $k$ tends to $+\infty$.
	Hence $\rr_+^*$ is union of disjoint intervals $[f^{k}(1),f^{k+1}(1))$ where $k\in\mathbb{Z}$. Then for all $x>0$, there exists a unique $k\in\mathbb{Z}$ such that $x\in [f^{k}(1),f^{k+1}(1))$. We define a function $g$ on $\rr_+$ as follows $g(0)=0$ and for all $x>0$, $g(x):=\gamma^k h(f^{-k}(x))$ where $k$ is the unique integer such that $x\in [f^{k}(1),f^{k+1}(1))$. As $g$ is equal to $h$ on $[1,f(1)]$, then $g$ is strictly increasing, continuous, and strictly positive on $[1,f(1)]$. Then, on the open interval $(f^{k}(1),f^{k+1}(1))$ for $k\neq 0$, $g$ is the composition $t\mapsto \gamma^k t$, $h$ and $f^{-k}$ and thus is strictly increasing, continuous and strictly positive on $(f^{k}(1),f^{k+1}(1))$ for all $k\in \mathbb{Z}$. Now, for all $k\neq 0$, $\lim_{y\to {f^{k}(1)}^-} g(y)=\lim_{y \to {f^k(1)}^-} \gamma^{k-1} h(f^{-k+1}(y))=\lim_{z=f^{-k}(y) \to 1^-} \gamma^{k-1} h(f(z))=\gamma^{k-1+1}=\gamma^k$. We also have $\lim_{y\to {f^{k}(1)}^+} g(y)=\lim_{y \to {f^k(1)}^+} \gamma^{k} h(f^{-k}(y))=\lim_{z \to 1^-} \gamma^{k} h(z)=\gamma^{k}h(1)=\gamma^k$. We conclude that $g$ is continuous on $\rr_+^*$. Now, we prove that $g$ is continuous at 0 (at right). Let us consider a sequence $(x_n)_{n\in\nn}$ of positive reals that converges to 0. Let $n\in\nn$. There exists a unique $k_n\in\mathbb Z$ such that $f^{k_n}(1)\leq x_n\leq f^{k_n+1}(1)$. We get $1\leq f^{-k_n}(x_n)\leq f(1)$ and as $h$ is increasing on $[1,f(1))$ and continuous (at left) at $f(1)$, $1=h(1)\leq h(f^{-k_n}(x_n))\leq h(f(1))=\gamma$. Finally, $\gamma^{k_n}\leq g(x_n)=\gamma^{k_n}h(f^{-k_n}(x_n))\leq \gamma^{k_n+1}$. As $(x_n)_{n\in\nn}$ tends to 0, the sequence $(k_n)_{n\in\nn}$ tends to $-\infty$ and we conclude that $(g(x_n))_{n\in\nn}$ tends to $0=g(0)$ and $g$ is continuous at 0. We have already proven that $g$ is strictly increasing on each interval $[f^{k}(1),f^{k+1}(1))$. It is easy to see that this is also the case when we pick two reals $x,y$ in two different intervals: $x<y$ implies $g(x) <g(y)$.
	
	Finally, we must prove that $g(f(x))=\gamma g(x)$ for all $x\in\rr_+$. As $f(0)=0$, this holds for $x=0$. Now, let us take $x>0$. This real $x$ belongs to some interval $[f^k(1),f^{k+1}(1))$. Thus, $f(x)\in [f^{k+1}(1),f^{k+2}(1))$ and 
	$g(f(x))=\gamma^{k+1}h(f^{-k-1}(f(x))=\gamma \gamma^k h(f^{-k}(x))=\gamma g(x)$. This ends the proof.
\end{proof}

\begin{lemmaAppendix}
	Let $\lambda\in (0,1)$. Let $f\in\kcls$ verifying for all $x>0$, $f(x)<x$. Then, there exists a function $g\in\kcls$ satisfying $g(f(x))=\lambda g(x)$ for all $x\in\rr_+$.
\end{lemmaAppendix}

\begin{proof}
	The function $f^{-1}$ is continuous, strictly increasing, and definite, and satisfies $f^{-1}(x)>x$ for all $x>0$. Then using Lemma~\ref{poslemma}, there exists $g\in\kcls$ such that $g(f^{-1}(x))=\lambda^{-1} g(x)$ for all $x\in\rr$. Then, for all $y\in\rr_+$, there exists a unique $x\in\rr_+$ such that $y=f^{-1}(x)$ or equivalently $x=f(y)$. Then, we have $g(y)=\lambda^{-1} g(f(y))$ and we conclude that $g(f(y))=\lambda g(y)$ for all $y\in\rr_+$. 
\end{proof}

\newcounter{propapp}
\counterwithin{propapp}{section}
\setcounter{propapp}{0}
\newtheorem{propAppendix}[propapp]{Proposition}

\newcounter{coroapp}
\counterwithin{coroapp}{section}
\setcounter{coroapp}{0}
\newtheorem{coroAppendix}[coroapp]{Corollary}
\subsection{Technical details about examples on the application for Leslie models}

We provide technical details about the family of $\klgen$ functions proposed in Equation~\eqref{eq:klgenepsdef} and the $\klgen$-$\fini{\xles}$ upper bound construction proposed in Equation~\eqref{eq:finalklgenfin}. Then, we provide details on the construction of the $(\xinles,L,\philes)$ certificate of compatibility for the Lyapunov function $\lyaples$ proposed in Equation~\eqref{eq:opt-lyap-leslie}. Recall that passing from $\xinles$ to $\xles$ does not the solutions of Problem~\eqref{eq:runningdpcp}.

First we give details about the analysis of coordinate functions. 

\begin{lemmaAppendix}
	For all $x\in(\rr_+^*)^2$, $(\pi_1(L^k x))_{k\in\nn}$ and $(\pi_2(L^k x))_{k\in\nn}$ are strictly increasing (from $k=2$ for $\pi_2$) and both tend to $+\infty$ as $k$ goes to $+\infty$. 
\end{lemmaAppendix}

\begin{proof}
	Let $x\in(\rr_+^*)^2$, we have $\pi_1(Lx)=a\pi_1(x)+b\pi_2(x)>a\pi_1(x)$. We conclude that $(\pi_1(L^k x))_k$ is strictly increasing as $(\rr_+^*)^2$ by $L$. Moreover, we have $\pi_2(L^2 x)=c\pi_1(Lx)=ca\pi_1(x)+cb\pi_2(x)>ac\pi_1(x)=a\pi_2(Lx)$. We conclude that  $(\pi_2(L^k x))_k$ is strictly increasing from $k\geq 2$. 
	
	Now, we also have
	\[
	\pi_1(L^k x)=\dfrac{1}{c(\lambda_2-\lambda_1)}\left(\lambda_1^{k+1}\afct(x)+\lambda_2^{k+1}\bfct(x)\right)=\dfrac{\lambda_2^{k+1}}{c(\lambda_2-\lambda_1)}\left(\left(\dfrac{\lambda_1}{\lambda_2}\right)^{k+1}\afct(x)+\bfct(x)\right)
	\]
	As $\lambda_2>1$ and from~\eqref{eq:abprop}, $\pi_1(L^k x)$ tends to $+\infty$ as $k$ goes to $+\infty$. We use the same techniques to prove that $\pi_1(L^k x)$ tends to $+\infty$ as $k$ goes to $+\infty$.
\end{proof}

\subsubsection{On the \protect$\klgen$-$\fini{\xles}$ certificate construction}

First recall that 
	\[
	H:\{(x,y)\in\rr^2: x+y\neq 0\}\ni (u,v)\mapsto \dfrac{\lambda_2}{c} \dfrac{\dfrac{\lambda_1}{\lambda_2}v+u}{v+u}
\]
and we have for all $x\in \xles$ and all $k\in\nn$
\[
	\philes(L^k x)=\dfrac{\lambda_1^{k+1}\afct(x)+\lambda_2^{k+1}\bfct(x)}{c\left(\lambda_1^k\afct(x)+\lambda_2^k\bfct(x)\right)}=
	\dfrac{\lambda_2}{c}\dfrac{\dfrac{\lambda_1}{\lambda_2} \left(\dfrac{\lambda_1}{\lambda_2}\right)^k+\dfrac{\bfct(x)}{\afct(x)}}{\left(\dfrac{\lambda_1}{\lambda_2}\right)^k+\dfrac{\bfct(x)}{\afct(x)}}=H\left(\dfrac{\bfct(x)}{\afct(x)},\left(\dfrac{\lambda_1}{\lambda_2}\right)^k\right).
\]
We also have:
\begin{itemize}
	\item for all $v>0$ (resp. $v<0$), $u\mapsto H(u,v)$ is strictly increasing on $(-\infty,-v)$ and on $(-v,+\infty)$ (resp. strictly decreasing on $(-\infty,-v)$ and on $(-v,+\infty)$);
	\item for all $u<0$ (resp. $u>0$), $v\mapsto H(u,v)$ is strictly increasing on $(-\infty,-u)$ and on $(-u,+\infty)$ (resp. strictly decreasing on $(-\infty,-u)$ and on $(-u,+\infty)$). 
\end{itemize}

\begin{propAppendix}
For all $\varepsilon>0$, the function 
\[
\gamma_\varepsilon:\rr\times\rr_+\ni (s,t)\mapsto \left\{\begin{array}{lr}
H\left(s,\left|\dfrac{\lambda_1}{\lambda_2}\right|^t\right) & \text{ if } s\leq -\left|\dfrac{\lambda_1}{\lambda_2}\right|^t-\varepsilon\\
\left(s+\left|\dfrac{\lambda_1}{\lambda_2}\right|^t+\varepsilon\right)+H\left(-\left|\dfrac{\lambda_1}{\lambda_2}\right|^t-\varepsilon,\left|\dfrac{\lambda_1}{\lambda_2}\right|^t\right) & \text{ otherwise }  
\end{array}\right.
\]
belongs to $\klgen$.

\end{propAppendix}

\begin{proof}
Let $t\in\rr_+$, for sake of simplicity we write $\omega_t=|\lambda_1/\lambda_2|^t$. Let $s,s'\in\rr$ such that $s<s'$. First suppose that $s<s'\leq -\omega_t-\varepsilon$. As $\omega_t>0$, $u\mapsto H(u,\omega_t)$ is strictly increasing on $(-\infty,-\omega_t)$. Since $s,s'<-\omega_t$, we get $H(s,\omega_t)<H(s',\omega_t)$. Now, suppose that $s'>s>-\omega_t-\varepsilon$. Then, $(s'+\omega_t+\varepsilon>(s+\omega_t+\varepsilon)$ and thus $\gamma_\varepsilon(s',t)>\gamma_\varepsilon(s,t)$ as $H(-\omega_t-\varepsilon,\omega_t)$ does not depend on $s,s'$. Finally, suppose that $s<-\omega_t-\varepsilon<s'$. The result follows readily from the fact that $H(\cdot,\omega_t)$ strictly increases on $(-\infty,-\omega_t)$ and $(s'+\omega_t+\varepsilon)$ is strictly positive.

Now let $s\in\rr$. Let $t,t'\in \rr_+$ such that $t<t'$. Keeping the same notation that is $\omega_t$ for $|\lambda_1/\lambda_2|^t$, we have $\omega_{t'}<\omega_t$. If $s\leq -\omega_t-\varepsilon$, we have $\gamma_\varepsilon(s,t)=H(s,\omega_t)$ and $\gamma_\varepsilon(s,t')=H(s,\omega_{t'})$. The conclusion follows from the fact that $v\mapsto H(s,v)$ strictly increases as $s<0$ on $(-\infty,-s)$ and $\omega_{t'}<\omega_t<\omega_{t}+\varepsilon\leq -s$. Now suppose that $s>-\omega_{t'}-\varepsilon$. 
Actually, we have for all $u,v$ such that $u+v\neq 0$, $H(u,v)=\lambda_2/c(1+(\lambda_1/\lambda_2-1)v(u+v)^{-1})$ and thus $H(-\omega_t-\varepsilon,\omega_t)=\lambda_2/c(1+\varepsilon^{-1}\omega_t(1-\lambda_1/\lambda_2))$ and $H(-\omega_{t'}-\varepsilon,\omega_{t'})=\lambda_2/c(1+\varepsilon^{-1}\omega_{t'}(1-\lambda_1/\lambda_2)$. We conclude that $v\mapsto (s+v+\varepsilon)+\lambda_2/c(1+\varepsilon^{-1}v(1-\lambda_1/\lambda_2)$ strictly increases and $\gamma_\varepsilon (s,t')<\gamma_\varepsilon (s,t)$. Finally, suppose that  $-\omega_{t'}-\varepsilon\geq s>-\omega_t-\varepsilon$. Then $\gamma_\varepsilon(s,t')=H(s,\omega_t')$ and $\gamma_\varepsilon(s,t)=(s+\omega_{t}+\varepsilon)+H(-\omega_{t}-\varepsilon,\omega_{t})$. Then $(s+\omega_{t}+\varepsilon)+H(-\omega_{t}-\varepsilon,\omega_{t})-H(s,\omega_t')=(s+\omega_{t}+\varepsilon)+\lambda_2/c(1+\varepsilon^{-1}\omega_{t}(1-\lambda_1/\lambda_2)-\lambda_2/c(1+(\omega_{t'}+s)^{-1}\omega_{t'}(\lambda_1/\lambda_2-1)=(s+\omega_{t}+\varepsilon)+\lambda_2/c(1-\lambda_1/\lambda_2)(\varepsilon^{-1}\omega_{t}+\omega_{t'}(\omega_{t'}+s)^{-1})$. Now as $-\omega_{t'}-\varepsilon\geq s$, we have $(\omega_{t'}+s)^{-1}\geq -\varepsilon^{-1}$. Thus, $(s+\omega_{t}+\varepsilon)+H(-\omega_{t}-\varepsilon,\omega_{t})-H(s,\omega_t')\geq (s+\omega_{t}+\varepsilon)+\lambda_2/c(1-\lambda_1/\lambda_2)\varepsilon^{-1}(\omega_t-\omega_{t'})>0$ and $\gamma_\varepsilon(s,t)>\gamma_\varepsilon(s,t')$.
\end{proof}

Recall that 
\[
\theles:\afct^{-1}(\rr\backslash\{0\}) \ni x\mapsto -\left|\dfrac{\bfct(x)}{\afct(x)} \right|.
\]

We propose a slightly different approach than that in Example~\ref{ex:klgenfinleslie}. We decompose the supremum of $\theles$ over $\afct^{-1}((0,+\infty))$ into two parts. Those have an additional constraint that is $(\afct-\bfct)^{-1}((-\infty,0))$ for the first and $(\afct-\bfct)^{-1}([0,+\infty))$ for the second. This explains the different cases in the definition of $\varepsilon^{>0}$ in~\eqref{eq:varepssupthe}. 

We recall that $\bfct$ is strictly positive on $(\rr_+^*)^2$. Hence, $\afct(x)\geq \bfct(x)$ implies that $\afct(x)>0$.
\begin{propAppendix}
	\label{propapp:sup}
The following equalities are true:
{\everymath{\displaystyle}
\[
\begin{array}{l} 
\sup_{\substack{\afct(x)>0\\ \afct(x)< \bfct(x)\\ x\in \xles}} \theles(x)=\left\{
\begin{array}{lr}
\min\left\{\theles(\xlu),-1\right\} & \text{ if } \{x\in \xles : 0<\afct(x)< \bfct(x)\}\neq \emptyset\\
-\infty & \text{ otherwise}
\end{array}\right., \\
\sup_{\substack{\afct(x)>0\\ \afct(x)\geq \bfct(x)\\ x\in \xles}} \theles(x)=\left\{
\begin{array}{lr}
\theles(\xlu)& \text{ if } \{x\in \xles : \afct(x)\geq \bfct(x)\}\neq \emptyset\\
-\infty & \text{ otherwise}
\end{array}\right.,\\
\sup_{\substack{\afct(x)<0\\ x\in \xles}} \theles(x)=\left\{
\begin{array}{lr}
\theles(\xul) & \text{ if } \{x\in \xles : \afct(x)<0\}\neq \emptyset\\
-\infty &  \text{ otherwise}
\end{array}\right. .
\end{array}
\]
}
Moreover, if $\afct(\xlu)<\bfct(\xlu)$, $ \{x\in \xles : \afct(x)\geq \bfct(x)\}$ is empty; and if $\afct(\xlu)\geq \bfct(\xlu)$, the set $\{x\in \xles : \afct(x)>0,\ \afct(x)< \bfct(x)\}$, if nonempty, contains elements for which $\theles$ is arbitrarily close to -1. 
\end{propAppendix}

\begin{proof}
Let us consider the supremum of $\theles$ on $C_1:=\{x\in X: \afct(x)>0,\ \afct(x)<\bfct(x)\}$. If $C_1$ is empty, the supremum is known to be $-\infty$. Suppose that $C_1$ is nonempty. First, we must have $\afct(\xlu)>0$. Otherwise, $\afct(\xlu)\leq 0$ which is the same as $\lambda_2/c\leq l_1/u_2$. This implies that $\afct(x)\leq 0$ for all $x\in X$ and $C_1$ cannot be nonempty. Second, $\theles$ is strictly smaller than -1 on $C_1$.
From the monotonicity analysis of $\theles$, we have for all $x$ different from $\xlu$ such that $\afct(x)>0$, $\theles(x)<\theles(\xlu)$. If $0<\afct(\xlu)<\bfct(\xlu)$ then $\xlu\in C_1$ and it is the unique maximizer of $\theles$ on $C_1$. Note that $0<\afct(\xlu)<\bfct(\xlu)$ is equivalent to $\theles(\xlu)<-1$ and so $\min\{\theles(\xlu),-1\}=\theles(x)$. Now, suppose that $\afct(\xul)\geq \bfct(\xul)$. As $\xinles$ is connected and $(\afct-\bfct)$ continuous, $\afct-\bfct(\xinles)$ is an interval. Moreover, $(\afct-\bfct)(\xlu)\geq 0$ and as $C_1$ is nonempty $(\afct-\bfct)(y)<0$ for some $y\in \xinles$ with $\afct(y)>0$. Hence, there exists $z\in\xinles$ such that $\afct(z)=\bfct(z)$. As $\bfct$ is strictly positive on $\xinles$, $\afct(z)$ is strictly positive. Recall that $\xinles$ does not contain 0, and thus $y\neq 0$. Let $\eta>0$ and write $u_\eta=\eta y\norm{y}^{-1}+z$. We have $\afct(u_\eta)=\eta\norm{y}^{-1}\afct(y)+\afct(z)>0$ and $(\afct-\bfct)(u_\eta)= \eta\norm{y}^{-1}(\afct-\bfct)(y)+(\afct-\bfct)(z)=\eta\norm{y}^{-1}(\afct-\bfct)(y)<0$. Then, $u_\eta\in C_1$ and $\theles(u_\eta)$ tends to $\theles(z)=-1$ as $\eta$ tends to 0. We conclude that $\sup_{C_1} \theles=-1$. Note that as $\afct(\xlu)\geq \bfct(\xlu)$ we have $-1\leq \theles(\xlu)$ and thus $\min\{-1,\theles(\xlu)\}=-1$. Simple computations show that $0<\afct(\xlu)<\bfct(\xlu)$ is the same as $a/(2c)<l_1/u_2$. This implies that for all $x\in\xles$, 
 $a/(2c)<x_1/x_2$ or equivalently $0<\afct(x)<\bfct(x)$ for all $x\in\xles$.
 
We prove that the supremum of $\theles$ on $C_2:=\{x\in X: \afct(x)\geq \bfct(x)\}$ equals $\theles(\xlu)$ if $C_2$ is nonempty. It suffices to prove that $\afct(\xlu)\geq \bfct(\xlu)$ if $C_2$ is nonempty. Let $x\in C_2$. This is the same as $a/2c\geq x_1/x_2$. This implies that $a/2c\geq l_1/u_2$ and so $\afct(\xlu)\geq \bfct(\xlu)$.

We prove that the supremum of $\theles$ on $C_3:=\{x\in X: \afct(x)<0\}$ equals $\theles(\xul)$ if $C_3$ is nonempty. From the monotonicity analysis of $\theles$, it suffices to prove that $\afct(\xul)<0$ if $C_3$ is nonempty. Let $x\in C_3$. This is the same as $\lambda_2/c< x_1/x_2$. This implies that $\lambda_2/c< u_1/l_2$ and so $\afct(\xul)<0$.
\end{proof}

\begin{coroAppendix}
	\label{coro:maxtheles}
	The maximum of $\theles$ over $\xles$ can be computed as 
	{\everymath{\displaystyle}
		\[
		\max_{x\in \xles} \theles(x)=\left\{
			\begin{array}{lr}
				\theles(\xul) & \text{ if } \afct(\xlu)\leq 0\\
				\theles(\xlu)& \text{ if } \afct(\xul)\geq 0\\
				\max\{\theles(\xul),\theles(\xlu)\} & \text{ if } \afct(\xul)<0\text{ and } \afct(\xlu)>0
			\end{array}\right.
		\]
	}
\end{coroAppendix}

Now recall that
	\[
	\varepsilon^{<0}=\left\{\begin{array}{lr} \dfrac{1}{2}(-1-\theles(\xul)) & \text{ if } \afct(\xul)<0\\
		+\infty & \text{ if } \afct(\xul)\geq 0\end{array}\right.
	\text{ and } 
	\varepsilon^{>0}=\left\{\begin{array}{lr} \dfrac{1}{2}(-1-\theles(\xlu)) & 0<\afct(\xlu)<\bfct(\xlu)\\
		\dfrac{1}{2}\left(\dfrac{\lambda_1}{\lambda_2}-\theles(\xlu)\right) & \text{ if } \afct(\xlu)\geq\bfct(\xlu)\\
		+\infty & \text{ if } \afct(\xlu)\leq 0\end{array}\right.
\]
and 
\[
	\epsles=\min\{\varepsilon^{<0},\varepsilon^{>0}\}.
\]
\begin{propAppendix}
	We have for all $x\in \xles$ and all $k\in\nn$:
	\[
	\philes(L^k x)\leq \gamma_{\epsles}\left(\theles(x),\left|\dfrac{\lambda_1}{\lambda_2}\right|^k\right)
	\]
\end{propAppendix}
\begin{proof}
First, from~\eqref{eq:abprop}, we have, for all $k\in\nn$, 
\[
\forall\, x\text{ s.t. } \afct(x)<0,\ \dfrac{\bfct(x)}{\afct(x)}<-\dfrac{\lambda_1}{\lambda_2}^k\text{ and } \forall\, x\text{ s.t. } \afct(x)>0,\ -\dfrac{\bfct(x)}{\afct(x)}<-\dfrac{\lambda_1}{\lambda_2}^k.
\]
Hence,
\[
\forall\, k\in\nn,\ \forall\, x\text{ s.t. } \afct(x)<0,\ \theles(x)<-1\leq \left|\dfrac{\lambda_1}{\lambda_2}\right|^k\text{ and } \forall\, k\in\nn^*,\ \forall\, x\text{ s.t. } \afct(x)>0,\ \theles(x)<\dfrac{\lambda_1}{\lambda_2}\leq \left|\dfrac{\lambda_1}{\lambda_2}\right|^k.
\]
By definition of $\varepsilon^{<0},\varepsilon^{>0}$ and $\epsles$, we have 
\[
\forall\, k\in\nn,\ \forall\, x\text{ s.t. } \afct(x)<0,\ \theles(x)<-1-\epsles\leq \left|\dfrac{\lambda_1}{\lambda_2}\right|^k-\epsles
\]
and 
\[
\forall\, k\in\nn^*,\ \forall\, x\text{ s.t. } \afct(x)>0,\ \theles(x)<\dfrac{\lambda_1}{\lambda_2}-\epsles\leq \left|\dfrac{\lambda_1}{\lambda_2}\right|^k-\epsles.
\]
Finally,
\[
\forall\, k\in\nn,\ \forall\, x\text{ s.t. } \afct(x)<0,\ \gamma_{\epsles}(\theles(x),k)=H\left(\theles(x),\left|\dfrac{\lambda_1}{\lambda_2}\right|^k\right)
\]
and 
\[
\forall\, k\in\nn^*,\ \forall\, x\text{ s.t. } \afct(x)>0,\  \gamma_{\epsles}(\theles(x),k)=H\left(\theles(x),\left|\dfrac{\lambda_1}{\lambda_2}\right|^k\right).
\]

Now, from Eq.~\eqref{eq:varphiH} and as for all $x\in\xles$, $v\mapsto H(\theles(x),v)$ strictly increases on $(-\infty,-\theles(x))$, we conclude for all $x\in \xles$ such that $\afct(x)<0$ and for all $k\in\nn$: 
\[
\philes(L^k x)=H\left(\theles(x),\left(\dfrac{\lambda_1}{\lambda_2}\right)^k\right)\leq H\left(\theles(x),\left|\dfrac{\lambda_1}{\lambda_2}\right|^k\right)=\gamma_{\epsles}(\theles(x),k).
\] 
Recall that $\afct\circ L=\lambda_1\afct$ and $\bfct\circ L=\lambda_2\bfct$. Thus, for $\afct(x)>0$ we have $\afct(Lx)<0$  and we can apply the previous inequality to $Lx$. We have for all $x\in \xles$ such that $\afct(x)>0$ and for all $k\in\nn^*$
{\everymath{\displaystyle}
\[
\begin{array}{ll}
\philes(L^k x)=\philes(L^{k-1}L x)&=H\left(\dfrac{\bfct(Lx)}{\afct(L x)},\left(\dfrac{\lambda_1}{\lambda_2}\right)^{k-1}\right)\\
&\leq H\left(\dfrac{\bfct(Lx)}{\afct(Lx)},\left|\dfrac{\lambda_1}{\lambda_2}\right|^{k-1}\right)=H\left(-\dfrac{\bfct(x)}{\afct(x)},\left|\dfrac{\lambda_1}{\lambda_2}\right|^{k}\right)=H\left(\theles(x),\left|\dfrac{\lambda_1}{\lambda_2}\right|^{k}\right)=\gamma_{\epsles}(\theles(x),k).
\end{array}
\]
}
It remains to prove the inequality at $k=0$ for all $x\in \xles$ such that $\afct(x)>0$, that is, $\philes(x)\leq \gamma_{\epsles}(\theles(x),0)$ for all $x\in \xles$ such that $\afct(x)>0$.

Suppose that $0<\afct(\xlu)<\bfct(\xlu)$. It implies that for all $x\in\xles$, $0<\afct(x)<\bfct(x)$ which is the same as $\theles(x)<-1$ and by definition of $\varepsilon^{>0}$ and $\epsles$, $\theles(x)\leq -1-\epsles$ for all $x\in \xles$. Then, for all $x\in \xles$, $\gamma_{\epsles}(\theles(x),0)=H(\theles(x),1)$. Since
\[
H\left(\theles(x),1\right)-\varphi(x)=H\left(\theles(x),1\right)-H\left(-\theles(x),1\right)=2\theles(x)\dfrac{\lambda_2}{c}\dfrac{1-\frac{\lambda_1}{\lambda_2}}{\left(1+\theles(x)\right)\left(1-\theles(x)\right)}>0,
\]
we conclude that, if $0<\afct(\xlu)<\bfct(\xlu)$, for all $k\in\nn$, for all $x\in \xles$ such that $\afct(x)>0$:
\[
\philes(L^k x)\leq H\left(\theles(x),\left|\dfrac{\lambda_1}{\lambda_2}\right|^{k}\right)=\gamma_{\epsles}(\theles(x),k).
\]	
Now assume that $\afct(\xlu)\geq \bfct(\xlu)$ and let $x\in\xles$ such that $\afct(x)>0$. If $\{x\in \xles : 0<\afct(x)< \bfct(x)\}$ is nonempty, we can have $\theles(y)>-1-\epsles$ for some $y\in\xles$ such that $0<\afct(y)< \bfct(y)$ as proved in Proposition~\ref{propapp:sup}. For the elements $y\in \{x\in \xles : 0<\afct(x)< \bfct(x)\}$ for which $\theles(y)\leq -1-\epsles$, the inequality $\philes(y)\leq \gamma_{\epsles}(\theles(y),0)$ has been proved. Therefore, suppose that $\theles(x)>-1-\epsles$. We remark that, for all $u>0$ and $\varepsilon>0$
\[
H(-u-\varepsilon,u)= \dfrac{\lambda_2}{c}\dfrac{u+\varepsilon-\frac{\lambda_1}{\lambda_2}u}{\varepsilon}=\dfrac{\lambda_2}{c}\left(1+\dfrac{u}{\varepsilon}\left(1-\frac{\lambda_1}{\lambda_2}\right)\right)>\dfrac{\lambda_2}{c}.
\]
It follows that:
\[
\gamma_\varepsilon(\theles(x),0)=\theles(x)+1+\epsles+H(-1-\epsles,1)>\dfrac{\lambda_2}{c}>\dfrac{x_1}{x_2}=\philes(x).
\]
This ends the proof.
\end{proof}
\subsubsection{Details on certificate of compatibility}

We bring details about the $(\xles,L,\philes)$ certificate of compatibility 
proposed in Example~\ref{ex:leslyapcertif}. 

\begin{lemmaAppendix}
	\label{lem:defM}
	Let $z=\max\{\omeles_0,\omeles_1\}$ and let $\eta>0$ such that $z-\eta>\lambda_2/c$. 
	
	Recall that the function $N:\afct^{-1}(\rr\backslash\{0\})\to\mathbb Z$ is defined as follows:
	\[
	\afct^{-1}(\rr\backslash\{0\}) \ni x\to N(x):=
	\left\lfloor\dfrac{\ln\left(\dfrac{\bfct(x)(c(z-\eta)-\lambda_2)}{|\afct(x)|(c(z-\eta)-\lambda_1)}\right)}{\ln\left(\left|\dfrac{\lambda_1}{\lambda_2}\right|\right)}\right\rfloor.
	\]
	Then, 
	\[
	\overline{N}:=\max_{x\in \xles} N(x)=\left\{
	\begin{array}{lr}
		N(\xul) & \text{ if } \afct(\xlu)\leq 0\\
		N(\xlu) & \text{ if } \afct(\xul)\geq 0\\
		\max\{N(\xlu),N(\xul)\} & \text{ if } \afct(\xlu)> 0 \text{ and } \afct(\xul)< 0
	\end{array}		
	\right.
	\] 
	and
	\[
	M:=\max_{x\in \xinles} \pi_1(L^{\overline{N}} x)=\pi_1(L^{\overline{N}} \xuu)
	\]
	satisfies $M\geq \pi_1(L^j x)$ for all $x\in\xles$, for all $j\in\nn$ such that $\philes(L^j x)\geq z-\eta$.
\end{lemmaAppendix}
\begin{proof}
	Let $x\in \xles$ and $k\in\nn$. Then, from \eqref{eq:abprop}, 
	{\everymath{\displaystyle}
		\[
		\begin{array}{ll}
			\philes(L^k x)\geq z-\eta&\iff \dfrac{\lambda_2}{c}\dfrac{\afct(x)\left(\dfrac{\lambda_1}{\lambda_2}\right)^{k+1}+\bfct(x)}{\afct(x)\left(\dfrac{\lambda_1}{\lambda_2}\right)^{k}+\bfct(x)}\geq z-\eta\\
			&\iff \left(\dfrac{\lambda_1}{\lambda_2}\right)^k\afct(x)(\lambda_1-c(z-\eta))\geq \bfct(x)(c(z-\eta)-\lambda_2)
		\end{array}
		\]
	}
	Now, if $\afct(x)<0$, we get, applying the natural logarithm,
	{\everymath{\displaystyle}
		\[
		\begin{array}{ll}
			\philes(L^k x)\geq z-\eta&\implies \left|\dfrac{\lambda_1}{\lambda_2}\right|^k\geq \dfrac{\bfct(x)(c(z-\eta)-\lambda_2)}{|\afct(x)|(c(z-\eta)-\lambda_1)}\\
			&\implies k\leq N(x)
		\end{array}
		\]
	}
	
	If $\afct(x)>0$, we have $\afct(Lx)<0$. Thus, $\philes(L^k x)\geq z-\eta \implies \philes(L^{k-1}y)\geq z-\eta$ where $y=Lx$ $\implies k-1\leq N(y)=K(Lx)$. Moreover, for all $u\in\afct^{-1}(\rr\backslash\{0\})$ (and thus $\afct(Lu)\neq 0$):
	{\everymath{\displaystyle}
		\[
		\begin{array}{ll}
			N(Lu)=\left\lfloor\dfrac{\ln\left(\dfrac{\bfct(Lx)(c(z-\eta)-\lambda_2)}{|\afct(Lx)|(c(z-\eta)-\lambda_1)}\right)}{\ln\left(\left|\dfrac{\lambda_1}{\lambda_2}\right|\right)}\right\rfloor&=\left\lfloor\dfrac{\ln\left(\dfrac{\lambda_2\bfct(x)(c(z-\eta)-\lambda_2)}{|\lambda_1||\afct(x)|(c(z-\eta)-\lambda_1)}\right)}{\ln\left(\left|\dfrac{\lambda_1}{\lambda_2}\right|\right)}\right\rfloor\\
			&=\left\lfloor\dfrac{-\ln\left(\left|\dfrac{\lambda_1}{\lambda_2}\right|\right)+\ln\left(\dfrac{\bfct(x)(c(z-\eta)-\lambda_2)}{|\afct(x)|(c(z-\eta)-\lambda_1)}\right)}{\ln\left(\left|\dfrac{\lambda_1}{\lambda_2}\right|\right)}\right\rfloor=N(u)-1
		\end{array}
		\] 
	}
	We conclude that for all $x\in\afct^{-1}(\rr\backslash\{0\})$, $\philes(L^k x)\geq z-\eta\implies k\leq N(x)$.
	
	Now, we prove the formula for the computation of $\overline{N}$. As $\lambda_1/\lambda_2\in (-1,0)$, the function $\rr_+^*\ni t\mapsto \ln(ct (z-\eta)-\lambda_2)/(c(z-\eta)-\lambda_1))/\ln(|\lambda_1/\lambda_2|)$ strictly decreases. Thus, $N$ is maximal on $\xles$ when $\bfct/|\afct|=-\theles$ is minimal on $\xles$. We have already computed the supremum of $\theles$ over $\xles$ in Corollary~\ref{coro:maxtheles} which corresponds to the opposite of the infimum $-\theles$ over $\xles$. Hence, minimizers on $\xles$ of $-\theles$ are the maximizers of $\theles$ over $\xles$. This yields the definition of $\overline{N}$. 
	
	To prove that $\overline{N}$ is nonnegative, it suffices to prove that $\philes(\xul)\geq z-\eta$ when $\afct(\xlu)\leq 0$, $\philes(L\xlu)\geq z-\eta$ when $\afct(\xul)\geq 0$ and $\philes(\xul)\geq z-\eta$ or $\philes(L\xlu)\geq z-\eta$ when $\afct(\xlu)>0$ and $\afct(\xul)<0$. We have already proved that $\omeles_{2k}=\philes(L^{2k}\xul)\text{ and }\omeles_{2k+1}=\philes(L^{2k+1}\xlu)$.
	It follows that $\omeles_0=\philes(\xul)$ and $\omeles_1=\philes(L\xlu)$. 
	First, we assume that $z=\omeles_0$. Then, as $z$ is strictly greater than $\lambda_2/c$, it is also the case for $\omeles_0$. Since $\omeles_0=\philes(\xul)$, we have $\afct(\xul)<0$. And finally, $N(\xul)\geq 0$ if and only $\bfct(\xul) (c(z-\eta)-\lambda_2)\leq  -\afct(\xul)(c(z-\eta)-\lambda_1)$ that is equivalent to $c(z-\eta)\leq (\lambda_1\afct(\xul)+\lambda_2 \bfct(\xul))/(\afct(\xul)+\bfct(\xul))$ that is the same as $z-\eta\leq \varphi(\xul)=z$. Hence, we conclude that $N(\xul)\geq 0$ if $z=\omeles_0$.
	Now assume that $z=\omeles_1=\philes(L\xlu)$. As $z$ is strictly greater than $\lambda_2/c$, 
	$\afct(\xlu)>0$. It follows that $N(\xlu)\geq 0$ if and only if $c(z-\eta)(\bfct(\xlu)-\afct(\xlu))\leq -\lambda_1\afct(\xlu)+\lambda_2\bfct(\xlu)$. If $\bfct(\xlu)-\afct(\xlu)\leq 0$, the inequality is valid as $\lambda_1<0$ and $\lambda_2$, $\afct(\xlu)$ and $\bfct(\xlu)$ are strictly positive. Assume that $\bfct(\xlu)-\afct(\xlu)>0$. Then $N(\xlu)\geq 0$ if and only if $z-\eta\leq (\lambda_2/c)(-\lambda_1 \afct(\xlu)/\lambda_2+\bfct(\xlu))/(-\afct(\xlu)+\bfct(\xlu))$. Now
	\[
	\begin{array}{ll}
		\dfrac{-\frac{\lambda_1}{\lambda_2}\afct(\xlu)+\bfct(\xlu)}{-\afct(\xlu)+\bfct(\xlu)}> \dfrac{-\frac{\lambda_1}{\lambda_2}\afct(\xlu)+\bfct(\xlu)}{\frac{\lambda_1}{\lambda_2}\afct(\xlu)+\bfct(\xlu)}&=\dfrac{\left(\frac{\lambda_1}{\lambda_2}\right)^2\afct(\xlu)-\left(\frac{\lambda_1}{\lambda_2}\right)^2\afct(\xlu)-\frac{\lambda_1}{\lambda_2}\afct(\xlu)+\bfct(\xlu)}{\frac{\lambda_1}{\lambda_2}\afct(\xlu)+\bfct(\xlu)}\\
		&=\varphi(L\xlu)-\dfrac{\frac{\lambda_1}{\lambda_2}\left(1+\frac{\lambda_1}{\lambda_2}\right)}{\frac{\lambda_1}{\lambda_2}\afct(\xlu)+\bfct(\xlu)}
	\end{array}
	\]
	We conclude that $(\lambda_2/c)(-\lambda_1 \afct(\xlu)/\lambda_2+\bfct(\xlu))/(-\afct(\xlu)+\bfct(\xlu))>\varphi(L\xlu)=z>z-\eta$ and $N(\xlu)\geq 0$. 
	Then, we can compute $M=\max_{x\in \xinles}\pi_1(L^{\overline{N}} x)$. First, from the discussion of Subsection~\ref{subsubsec:rapidanal}, for all $k\leq \overline{N}$, $\pi_1(L^k x)\leq \pi_1(L^{\overline{N}} x)\leq M$. Now as for all $x\in\rr^2$ and all $k\in\nn$, $\pi_1(L^k x)=(c(\lambda_2-\lambda_1))^{-1}(\lambda_1^{k+1}\afct(x)+\lambda_2^{k+1}\bfct(x))$ and thus $\pi_1(L^k x)$ increases in $x_1$ and $x_2$ and we conclude that $\pi_1(L^k x)\leq \pi_1(L^k \xuu)$ for all $x\in\xinles$ and for all $k\in\nn$. Finally, $M=\pi_1(L^{\overline{N}} \xuu)$.
\end{proof}
\begin{propAppendix}
	Let $z=\max\{\omeles_0,\omeles_1\}$ and $x^*\in \xles\cup L(\xles)$ such that $\philes(x^*)=z$. 
	Let us consider 
	\[
	\alpha_1:\rr\ni s\to \dfrac{C}{M^2(1+s^{-2})}.
	\]
	Then for all $\eta>0$ such that $z-\eta>\lambda_2/c$ and for all $\gamma$ greater than
	\[
	\max\left\{\max_{s\in [z-\eta,z]} \alpha_1'(s),\dfrac{\lyaples(x^*)}{\eta}\right\},
	\]
	the function  
	\[
	\alpha:\rr\ni s\mapsto \left\{
	\begin{array}{lr}
		\dfrac{C}{M^2(1+s^{-2})} & \text{ if } s\geq z\\
		\gamma(s-z)+\dfrac{C}{M^2(1+z^{-2})} & \text{ if } s<z
	\end{array}
	\right.
	\]
	is a $(\xles,L,\philes)$-certificate of compatibility for $\lyaples$.
\end{propAppendix}
\begin{proof}
From Lemma~\ref{lem:defM}, for all $x\in\xles$ and for all $k\in\nn$ such that $\varphi(L^k x)\geq z-\eta$ we have 
\[
\lyaples(L^k x)=\dfrac{C}{\pi_1(L^k x)^2+\pi_2(L^k x)^2}=\dfrac{C}{\pi_1(L^k x)^2(1+\philes(L^k x)^{-2})}\geq \dfrac{C}{M^2(1+\philes(L^k x)^{-2})}
\]

We define 
\[
\alpha_1:s\mapsto \dfrac{C}{M^2(1+s^{-2})}
\]
Now let us take any $\gamma$ greater than
\[
\max\left\{\max_{s\in [z-\eta,z]} \alpha_1'(s),\dfrac{\lyaples(x^*)}{\eta}\right\}.
\]
Note that as $\alpha_1$ is strictly increasing and $\lyaples$ is strictly positive, $\gamma$ is strictly positive.
From the mean value theorem, for all $s\in [z-\eta,z]$, $\alpha_1(z)-\alpha_1(s)=(z-s)\alpha_1^{'}(t)$ for some $t\in (s,\omega)$. Then by definition of $\gamma$, 
$\alpha_1(z)-\alpha_1(s)\leq \gamma(z-s)$ or equivalently $\alpha_1(z)+\gamma(s-z)\leq \alpha_1(z)$. By definition of $\alpha_1$, we have $\alpha_1(z)+\gamma(\philes(L^k x)-z)\leq V(L^k z)$ for all $x\in\xinles$ and all $k\in\nn$ such that $\philes(L^k x)\in [z-\eta,z]$. Moreover, there exists $u\in [z-\eta,z)$ such that $\gamma(u-z)+\alpha_1(z)=0$. 
Indeed, $V(L^j x)\geq \alpha_1(\varphi(L^j x))$ for all $j\in\nn$ and $x\in\xles$ such that $\varphi(L^j x)\geq z-\eta$ and $\varphi(x^*)=z$ with either $x^*\in\xles$ if $z=\omeles_0$ or $x^*\in L(\xles)$ if $z=\omeles_1$. From this, we have 
$\gamma(z-\eta-z)+\alpha_1(z)\leq -V(x^*)+\alpha_1(z)\leq -\alpha_1(z)+\alpha_1(z)=0$. We also have $\gamma(z-z)+\alpha_1(z)>0$. Either, $u=z-\eta$ or $-\eta\gamma+\alpha_1(z)<0$ and $u\in (z-\eta,z)$. It follows from the strict positivity of $\lyaples$ and the strict increase of $s\mapsto \alpha_1(z)+\gamma(s-z)$ that for all $x\in\xles$ and all $k\in\nn$ such that $\philes(L^k x)\leq u$, $\alpha_1(z)+\gamma(\philes(L^k x)-z)\leq \lyaples(L^k x)$. We conclude that 
\[
\alpha:s\mapsto \left\{
\begin{array}{lr}
	\dfrac{C}{M(1+s^{-2})} & \text{ if } s\geq z\\
	\gamma(s-z)+\dfrac{C}{M^2(1+z^{-2})} & \text{ if } s<z
\end{array}
\right.
\]
is a $(\xles,L,\philes)$-certificate of compatibility for $\lyaples$. 
\end{proof}
\end{document}